\documentclass[a4paper,reqno, 12pt]{amsart}

\usepackage[margin=3cm]{geometry}

% The next 4 lines modify the format of the addresses at the end of the paper
\makeatletter
\let\old@setaddresses\@setaddresses
\def\@setaddresses{\bigskip\bgroup\parindent 0pt\let\scshape\relax\old@setaddresses\egroup}
\makeatother

\usepackage[utf8]{inputenc}
\usepackage[T1]{fontenc}

\usepackage{amsthm, amssymb, amsmath}

\usepackage{thmtools}
\usepackage{thm-restate}

\newtheorem{theorem}{Theorem}[section]
\newtheorem{lemma}[theorem]{Lemma}

\newtheorem{corollary}[theorem]{Corollary}
\newtheorem{observation}[theorem]{Observation}
\newtheorem{problem}[theorem]{Problem}

\usepackage{float}
\usepackage{needspace}
\usepackage{graphicx}
\usepackage{xcolor}
\usepackage{subcaption}

\usepackage{tikz} % To draw graphs
\usetikzlibrary{shapes,arrows,animations,quotes}
\usetikzlibrary {graphs}
\usetikzlibrary{decorations.pathreplacing}
\usetikzlibrary {arrows.meta}

\newcommand{\ve}{\textup{\textsf{v}}}
\newcommand{\e}{\textup{\textsf{e}}}

\usepackage{hyperref}
\hypersetup{
  pdftitle = {Nowhere-zero flow reconfiguration},
  pdfauthor=  {Louis Esperet, Aurélie Lagoutte, Margaux Marseloo},
  colorlinks = true,
  linkcolor = black!30!red,
  citecolor = black!30!green
}
\usepackage[shortlabels]{enumitem}

\usepackage{marvosym}
\newcommand{\apx}[1]{\hyperref[#1]{\Rightscissors}}

% for the qed's of claims
\def\cqedsymbol{\ifmmode$\lrcorner$\else{\unskip\nobreak\hfil
\penalty50\hskip1em\null\nobreak\hfil$\lrcorner$
\parfillskip=0pt\finalhyphendemerits=0\endgraf}\fi}

\let\le\leqslant
\let\ge\geqslant
\let\leq\leqslant
\let\geq\geqslant

\DeclareMathOperator{\supp}{\mathrm{supp}}

\title{Nowhere-zero flow reconfiguration}

\author[L.~Esperet]{Louis Esperet}
\address[L.~Esperet]{Laboratoire G-SCOP (CNRS, Université Grenoble Alpes), Grenoble, France}
\email{louis.esperet@grenoble-inp.fr}

\author[K.~Hendrey]{Kevin Hendrey}
\address[K.~Hendrey]{School of Mathematics, Monash University, Melbourne, Australia}
\email{kevin.hendrey1@monash.edu}

\author[A.~Lagoutte]{Aurélie Lagoutte}
\address[A.~Lagoutte]{Laboratoire G-SCOP (CNRS, Université Grenoble Alpes), Grenoble, France}
\email{aurelie.lagoutte@grenoble-inp.fr}

\author[M.~Marseloo]{Margaux Marseloo}
\address[M.~Marseloo]{Université de Genève, Switzerland}
\email{margaux.marseloo@unige.ch}

\author[S.~Norin]{Sergey Norin}
\address[S.~Norin]{McGill University, Montréal, Canada}
\email{snorin@math.mcgill.ca}

\author[R.~Steiner]{Raphael Steiner}
\address[R.~Steiner]{Department of Mathematics, Institute for Operations Research, ETH Z\"urich, Switzerland}
\email{raphaelmario.steiner@math.ethz.ch}

\thanks{L.~Esperet is partially supported by the French ANR Projects
  ENEDISC (ANR-24-CE48-7768) and Mimétique (ANR-25-CE48-4089), and by LabEx
  PERSYVAL-lab (ANR-11-LABX-0025). A.~Lagoutte is partially supported by the French ANR project GRALMECO (ANR-21-CE48-0004). S.~Norin is partially supported by an NSERC discovery grant.}

\begin{document}

\begin{abstract}
We initiate the study of nowhere-zero flow reconfiguration. The natural question is whether any two nowhere-zero $k$-flows of a given graph $G$ are connected by a sequence of nowhere-zero $k$-flows of $G$, such that any two consecutive flows in the sequence differ only on a cycle of $G$. 

We study this problem in the setting of integer flows and group flows, and prove a number of positive and negative results. 
\begin{itemize}
    \item The natural  reconfiguration variant of Tutte's 5-flow conjecture, stating that any two nowhere-zero 5-flows in any 2-edge-connected graph are connected, is false in the group and integer cases. 
    \item All nowhere-zero $\mathbb{Z}_2^8$-flows of every 2-edge-connected graph are connected and for every sufficiently large abelian group $A$, all nowhere-zero $A$-flows of every 2-edge-connected graph are connected.
\item The group structure  affects the answer, contrary to the existence problem for nowhere-zero flows.
\item We highlight a duality with recoloring in planar graphs and deduce that any two nowhere-zero 7-flows in a planar graph are connected, among other results.
\item For every 2-edge-connected graph $G$, there is an integer $k$ such that all nowhere-zero $k$-flows of $G$ are connected. 
\end{itemize}
\end{abstract}

\maketitle

\section{Introduction}

Combinatorial reconfiguration studies the space of solutions of a
given combinatorial problem through the lens of its reconfiguration
graph, where two solutions are adjacent if one can be obtained from
the other by an elementary operation.
The main
questions considered in combinatorial reconfiguration are the
following: (1) is the reconfiguration graph  connected? and if so, (2)
what is the diameter? Results on the connectivity and
expansion of the reconfiguration graph have immediate consequences on
the efficiency of random sampling algorithms based on Markov chains,
and are of interest for various communities ranging from algorithmic
graph theory to statistical physics.

\smallskip

 In this paper we initiate the study of  nowhere-zero flow
 reconfiguration. The vertices of the reconfiguration graph are
 nowhere-zero flows, and an elementary operation consists in adding some flow value along a cycle.

\smallskip

We study two natural settings: integer flows and group flows. Given an
(additive) abelian group $A$, an $A$-flow in a graph $G$ is an assignment of
values and directions to the edges of $G$ such that around each vertex
$v$, the sum of the incoming flow values is equal to the sum of the
outgoing flow values. For an integer  $k\ge 2$, a 
\emph{$k$-flow} in a graph $G$ is a $\mathbb{Z}$-flow in which all flow
values are less than $k$ in absolute value. 
An $A$-flow is said to be \emph{nowhere-zero} if no edge is assigned $0_A$,
the neutral element of $A$. We consider the reconfiguration graphs
$\mathcal{F}(G,A)$ and $\mathcal{F}(G,k)$ whose vertices are the
nowhere-zero $A$-flows and $k$-flows of $G$ (respectively), and in which two
flows are adjacent if and only the support of their difference is a cycle
(i.e., a
2-regular connected subgraph). Our main concern is thus to identify for which graph $G$ and group $A$ (resp.\ integer $k$) the graph $\mathcal{F}(G,A)$ (resp.\ $\mathcal{F}(G,k)$) is connected (note that the empty graph is considered as connected in this paper).

\smallskip

A classical result of Tutte  \cite{Tutte190} states that nowhere-zero integer and group flows
are equivalent, in the sense that for any abelian group $A$, a graph $G$ has a nowhere-zero
$A$-flow if and only if it has a nowhere-zero
$|A|$-flow. In particular, for any two abelian groups $A$ and $B$ of
the same cardinality, a graph has a nowhere-zero $A$-flow if and only
if it has a nowhere-zero $B$-flow, in other words the existence of
nowhere-zero $A$-flows does not depend on the structure of an abelian group $A$, only
on its cardinality.

\smallskip

We first observe that if $\mathcal{F}(G,k)$ is connected, then
$\mathcal{F}(G,\mathbb{Z}_k)$ is also connected, where $\mathbb{Z}_k$ denotes the cyclic group of all
integers modulo $k$. It is not yet clear whether the converse also holds, but we provide examples of graphs
for which  $\mathcal{F}(G,\mathbb{Z}_2\times \mathbb{Z}_2)$ is
connected while  $\mathcal{F}(G,\mathbb{Z}_4)$ is
not. This shows that the connectivity of the flow reconfiguration
graph does depend on the group structure. As most of the existential
results for integer flows are based on groups flows for various
groups, this suggests that integer flow reconfiguration is
significantly harder than group flow reconfiguration. 

\smallskip

A classical conjecture of Tutte \cite{Tutte190} states that every 2-edge-connected graph
has a nowhere-zero 5-flow, and the best known result towards this
conjecture is a theorem of Seymour \cite{Seymour6flows}, stating that every 2-edge-connected graph
has a nowhere-zero $(\mathbb{Z}_2\times \mathbb{Z}_3)$-flow, and thus
a nowhere-zero 6-flow. A natural problem is the following
reconfiguration version of the existence of nowhere-zero 5-flows:

\begin{problem}\label{conj:iflow}
Is it true that for every 2-edge-connected graph $G$, 
the reconfiguration graph $\mathcal{F}(G,5)$ is connected?
\end{problem}

Note that Problem \ref{conj:iflow} is incomparable to Tutte's 5-flow conjecture, since the connectedness of $\mathcal{F}(G,k)$ does not imply its non-emptiness. 
%If the latter holds, then for any 2-edge-connected grapg $G$,  $\mathcal{F}(G,5)$ is indeed non-empty and Conjecture \ref{conj:iflow} boils down to proving that  $\mathcal{F}(G,5)$ is connected. 
The following weaker problem is also of interest.

\begin{problem}\label{conj:iflow2}
Is there an integer $k\ge 5$, such that for every 2-edge-connected graph $G$,
the reconfiguration graph $\mathcal{F}(G,k)$ is connected?
\end{problem}

As explained above, it is meaningful to study the (possibly weaker)
variants of these problems on group flows.

\begin{problem}\label{conj:gflow1}
Is it true that for every 2-edge-connected graph $G$, 
the reconfiguration graph $\mathcal{F}(G,\mathbb{Z}_5)$  is connected?
\end{problem}

\begin{problem}\label{conj:gflow2}
Is there an abelian group $A$, such that for
every 2-edge-connected graph $G$, the reconfiguration graph
$\mathcal{F}(G,A)$ is connected?
\end{problem}

One of the main
techniques in combinatorial reconfiguration in order to show that a
reconfiguration graph is disconnected is to identify isolated
vertices, also called \emph{frozen configurations}. This is how many
lower bounds in graph recoloring are proved, for instance. We show that for
any graph $G$, these
frozen configurations do not exist in the flow reconfiguration graph
$\mathcal{F}(G,A)$, whenever $|A|=4$ or
$|A|\ge 6$, and in the flow reconfiguration graph
$\mathcal{F}(G,k)$ when $k\ge 2$. On the other hand, we show that surprisingly, $\mathcal{F}(G,\mathbb{Z}_5)$ can contain isolated vertices. This gives in particular negative answers to Problems \ref{conj:iflow} and \ref{conj:gflow1} above.

\smallskip

We prove that for any 2-edge-connected planar graph $G$ and any
integer $k\ge 7$, and any group $A$ of cardinality at least 7,
$\mathcal{F}(G,k)$ and $\mathcal{F}(G,A)$ are both
connected. We do so by showing that in planar graphs, the flow
reconfiguration problem is dual to the graph recoloring problem, where
elementary operations consist of recoloring vertices one at a
time. Moreover, upper bounds on the diameter of the recoloring
graph can be transferred to the flow reconfiguration graph.

\smallskip

We also make a connection between the reconfiguration of nowhere-zero
$(\mathbb{Z}_2\times \mathbb{Z}_2)$-flows in cubic graphs and the
reconfiguration of 3-edge-colorings, where the elementary operations
are Kempe changes. We can then translate known results in this setting to flow
reconfiguration, in particular we deduce that for every
2-edge-connected planar cubic bipartite graph $G$, the reconfiguration graph
$\mathcal{F}(G,\mathbb{Z}_2\times \mathbb{Z}_2)$ is
connected.

\smallskip

Our main result is that for any 2-edge-connected graph $G$, $\mathcal{F}(G,\mathbb{Z}_2^8)$ is connected, giving a positive answer to Problem \ref{conj:gflow2} above. We then show that for any sufficiently large abelian group $A$ and for any 2-edge-connected graph $G$, $\mathcal{F}(G,A)$ is connected. This extension is not immediate, contrary to the existence problem for group flows (see the remark after Theorem \ref{thm:tuttecard}). 

Finally, we show that for any 2-edge-connected graph $G$,
there exists an integer $k$ for which $\mathcal{F}(G,k)$ is
connected, a small step towards a resolution of Problem \ref{conj:iflow2}.

\subsection*{Outline} We start with preliminaries on flows and reconfiguration in Section \ref{sec:prel}. We then prove in Section \ref{sec:frozen} that no frozen configuration can exist, except for groups $A$ with $|A|\le 3$ or $A=\mathbb{Z}_5$. We then precisely characterize frozen nowhere-zero $\mathbb{Z}_5$-flows and use this characterization to find such a flow explicitly, giving a negative answer to Problems \ref{conj:iflow} and \ref{conj:gflow1}.
%and for $1$-flows. 
%L: les 1-flots ont 0 partout, par définition, il ne peuvent pas être non-nuls :-)
Section \ref{sec:z2z2} is dedicated to the study
of $(\mathbb{Z}_2\times \mathbb{Z}_2)$-flow reconfiguration, and its connection to Kempe changes in 3-edge-colorings of cubic graphs. In Section \ref{sec:duality}, we introduce the classical duality between flows and colorings in planar graphs and explain how it translates to the setting of reconfiguration. In Section \ref{sec:ab}, we prove that for any 2-edge-connected graph $G$, $\mathcal{F}(G,\mathbb{Z}_2^8)$ is
connected, giving a positive answer to Problem \ref{conj:gflow2}. We then extend this result to all sufficiently large abelian groups. In Section \ref{sec:intflows}, we show that for every 2-edge-connected graph $G$, there is an integer $k$ such that $\mathcal{F}(G,k)$ is connected. We conclude in Section \ref{sec:ccl} with some additional remarks.

\section{Preliminaries}\label{sec:prel}

\subsection{Group and integer flows}\label{sec:def}

All abelian groups in this paper are written additively,
and the neutral element of an abelian group $A$ is denoted by $0_A$,
or simply $0$ if $A$ is clear from the context. All graphs considered in the paper
are allowed to have multiple edges, but no loop.

\smallskip

    Let $G$ be a 2-edge-connected graph, let $D$ be an orientation of its edges and let $A$ be an abelian group. An \emph{$A$-flow} in $G$ with respect to $D$ is a function $f : E \to A$ such that, for every vertex $v \in V$, the following conservation condition holds: \[
        \sum_{e\in E^+(v)} f(e) = \sum_{e\in E^-(v)} f(e),
    \] where $E^+(v)$, respectively $E^-(v)$, is the set of all arcs
    of $D$ with their tails, respectively their head, at $v$.
    In the following, we denote $\sum_{e\in E^*(v)} f(e)$ by $f^*(v)$
    with $* \in \{+, -\}$ and $\delta f (v) = f^+(v) - f^-(v)$.

    \smallskip
    
    A $\mathbb{Z}$-flow $f$ in a graph $G$ is said to be a
    \emph{$k$-flow} if  $|f(e)| < k$ for every edge $e \in E(G)$.
    We note that given a graph $G$, an orientation $D$ of
$E(G)$, and an abelian group $A$, the $A$-flows in $G$ with respect
to $D$ naturally form an abelian group, where $(f+g)(e)=f(e)+g(e)$,
and in
particular we can define the difference $f-g$ of two $A$-flows. Note
that when
$f$ and $g$ are $k$-flows, $f+g$ and $f-g$ are still
$\mathbb{Z}$-flows, but not necessarily $k$-flows.

\smallskip

  The \emph{support} of an $A$-flow $f : E \to A$, denoted $\supp(f)$,
  is the set of all edges of $G$ with $f(e)\neq 0_A$. A flow $f$ on a graph $G$ is said to be \emph{nowhere-zero} if $\supp(f) = E(G)$.
Note that the existence of a nowhere-zero $A$-flow in a graph $G$ is
independent of the underlying orientation $D$ of the edge-set of
$G$ (since any arc $uv$ with flow value $p$ can be reversed to the arc $vu$ with flow value $-p$). So in the following, we often omit to mention $D$ explicitly and
instead consider it
to be fixed implicitly whenever we consider a graph $G$. Crucially,
all the flows of a graph are defined using the same underlying orientation.

\medskip

For a graph $G=(V,E)$ we write $\ve(G)=|V|$ and $\e(G)=|E|$. We will need the following classical results about flow decomposition into cycles.

\begin{lemma}\label{lem:linear}
    For any connected graph $G$ and abelian group $A$, any $A$-flow in $G$ is the sum of at most $\e(G)-\ve(G)+1$ $A$-flows whose supports
    are cycles of $G$.
  \end{lemma}
  
  \begin{proof}
   Let $f$ be an $A$-flow in $G$.   As long as $\supp(f)$ contains a cycle $C$, subtract from $f$ a flow along $C$ whose value is equal to $f(e)$, for some $e\in C$. Note that the support of $f$ decreases by at least 1 at each step. When $\supp(f)$ is a forest, $\supp(f)=\emptyset$ since $f$ is an $A$-flow, and the result follows.
  \end{proof}
  
  \begin{lemma}\label{lem:intflowdec}
    Let  $G$ be a graph and $k\ge 2$ be an integer. Any $k$-flow $f$ in $G$ is the sum of $k$-flows  $(f_i)_{1\le i \le s}$, whose supports
    are cycles of $G$, and such that for any $1\le j \le s$, and any edge $e$ of $G$, $\sum_{1\le i \le j} f_i(e)$ lies between $0$ and $f(e)$.
  \end{lemma}
  
  \begin{proof}
   Let $f$ be a $k$-flow in $G$. Up to reversing the orientations of a subset of edges $e$ of $G$ (and replacing $f(e)$ by $-f(e)$) we can assume that $f(e)\ge 0$ for every edge $e$ of $G$.
   As long as $\supp(f)$ contains a directed cycle $C$, subtract from $f$ a flow of value 1 along $C$. Note that for each edge $e$, the sequence of flow values on $e$ during this procedure is non-increasing and all values remain non-negative. When $\supp(f)$ has no directed cycle, $\supp(f)=\emptyset$ since $f$ is a $k$-flow with non-negative flow values.
  \end{proof}

Lemmas \ref{lem:linear} and \ref{lem:intflowdec} imply that the difference $g-f$ of two flows $g$ and $f$ can be written as a sum of flows whose supports are cycles. Therefore, $g$ can be obtained from $f$ by successively adding flows whose supports are cycles. The main question considered in this paper is: \emph{when $f$ and $g$ are nowhere-zero, can we make sure that all intermediate flows between $f$ and $g$ are also nowhere-zero?}

\medskip

Note that any nowhere-zero $k$-flow $f$ in a graph $G$ can be turned into
a nowhere-zero $\mathbb{Z}_k$-flow of $G$ by replacing $f(e)$ by $f(e)
\pmod k$ for any edge $e$. 
It was proved by Tutte that conversely, every nowhere-zero $\mathbb{Z}_k$-flow
in $G$ can be turned into a nowhere-zero $k$-flow.

  \begin{theorem}[\cite{Tutte190}]\label{thm:tutteintmod}
    For any graph $G$, integer $k$, and 
    $\mathbb{Z}_k$-flow $f$ of $G$, there is a $k$-flow
    $g$ in $G$ such that for every $e\in E(G)$, $f(e)\equiv g(e)\pmod k$. In particular, $f$ is nowhere-zero if and only if $g$ is nowhere-zero.
  \end{theorem}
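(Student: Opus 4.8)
The plan is to lift $f$ to an integer flow with values chosen in a fixed range, and then correct it into a genuine $\mathbb{Z}$-flow. Fix the implicit orientation $D$. For each edge $e$, let $r_e\in\{1,\dots,k-1\}$ be the unique representative of $f(e)$ modulo $k$, and set $h(e)=r_e$; this $h$ is nowhere-zero but only satisfies the conservation condition modulo $k$. Since $f$ satisfies conservation in $\mathbb{Z}_k$, the defect $\delta h(v)$ is a multiple of $k$ at every vertex, so $c(v):=\delta h(v)/k$ is a well-defined integer, and $\sum_{v}c(v)=0$ because every edge contributes $+h(e)$ at its tail and $-h(e)$ at its head.

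The key observation is that any $\mathbb{Z}$-flow $g$ with $g(e)\equiv f(e)\pmod k$ and $|g(e)|<k$ must satisfy $g(e)\in\{r_e,\,r_e-k\}$ for each edge, since these are the only two integers in the interval $(-k,k)$ that are congruent to $r_e$ modulo $k$. I would therefore look for a subset $X\subseteq E(G)$ and set $g:=h-k\,\mathbf{1}_X$, i.e. switch the value of $e$ from $r_e$ to $r_e-k$ exactly when $e\in X$. Such a $g$ is automatically nowhere-zero and takes values in $(-k,k)$, and it is a genuine $\mathbb{Z}$-flow precisely when, for every vertex $v$, the number of arcs of $X$ leaving $v$ minus the number entering $v$ equals $c(v)$.

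Finding such an $X$ is an integral feasible-flow problem: I want a $0/1$ assignment on the arcs whose net out-balance at each vertex equals the prescribed supply $c(v)$. By the Gale--Hoffman feasibility theorem (with lower bounds $0$ and upper bounds $1$ on every arc), such an integral $X$ exists if and only if $\sum_{v\in S}c(v)\le d^+(S)$ for every $S\subseteq V(G)$, where $d^+(S)$ denotes the number of arcs leaving $S$. I expect the verification of this cut condition to be the main point, but it follows from a direct estimate: summing the defects over $S$ and cancelling the edges internal to $S$ gives $k\sum_{v\in S}c(v)=\sum_{e\in\partial^+(S)}h(e)-\sum_{e\in\partial^-(S)}h(e)\le(k-1)\,d^+(S)<k\,d^+(S)$, using $1\le h(e)\le k-1$; since both sides are integers this yields $\sum_{v\in S}c(v)\le d^+(S)$. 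The boundary case where $S$ is a union of connected components is fine, since then $d^+(S)=0$ while $\sum_{v\in S}c(v)=0$.

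The main obstacle is thus isolating the right invariant that forces the correction to stay in range, and here the two-values-per-edge remark is what reduces the entire statement to a single application of integral flow feasibility, with the clean cut bound above doing the work. An alternative route is to minimise $\sum_e|g(e)|$ over all $\mathbb{Z}$-flows congruent to $f$ modulo $k$ and to argue that a minimiser already satisfies $|g(e)|<k$ everywhere; in that approach the delicate step is to exhibit, from any edge with $|g(e)|\ge k$, an oriented cycle along which subtracting $k$ strictly decreases the sum, which is exactly the place where the same magnitude bound on $h$ would reappear.
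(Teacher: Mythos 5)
The paper does not actually prove this statement---it is quoted as a classical theorem of Tutte with a citation only---so your proof can only be judged on its own, and it is correct and complete. Lifting $f$ to the representative function $h$ with $h(e)=r_e\in\{1,\dots,k-1\}$, observing that any admissible $g$ must take values in $\{r_e,r_e-k\}$, and reducing the problem to finding a $0/1$ assignment $X$ with prescribed divergence $c(v)=\delta h(v)/k$ is a clean reduction, and your verification of the Gale--Hoffman cut condition is right: for any $S$, the internal edges cancel, so $k\sum_{v\in S}c(v)=\sum_{e\in\partial^+(S)}h(e)-\sum_{e\in\partial^-(S)}h(e)\le (k-1)d^+(S)$, which gives $\sum_{v\in S}c(v)\le d^+(S)$ without even needing integrality (and also covers the case $d^+(S)=0$, where the left-hand side is $\le 0$ because $h\ge 0$; your remark about unions of components is not the relevant boundary case, but the general estimate subsumes it). Total unimodularity of the network matrix then yields an integral, hence $0/1$, solution. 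For comparison, the classical argument (Tutte's, as presented e.g.\ in Diestel) is essentially your sketched alternative: one takes \emph{any} lift with values in $\{r_e,r_e-k\}$ and minimises the total imbalance $\sum_v|\delta g(v)|$, then shows a positive-excess vertex can always send a unit of $k$ along a suitable walk to a negative-excess vertex, decreasing the imbalance. Your flow-feasibility route buys a one-shot existence statement in place of that augmenting argument, at the price of invoking Gale--Hoffman plus integrality; both are legitimate, and yours is arguably tidier.
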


Using the property that
the number of nowhere-zero $A$-flows in $G$ is a polynomial that only
depends on $|A|$, he obtained the following result.

    \begin{theorem}[\cite{Tutte190}]\label{thm:tuttecard}
    For any graph $G$ and abelian group $A$, $G$ admits a nowhere-zero $A$-flow if and only if it admits a nowhere-zero $|A|$-flow.
  \end{theorem}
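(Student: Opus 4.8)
The plan is to prove the stronger statement that the number of nowhere-zero $A$-flows in $G$ depends only on the cardinality $|A|$, and then to combine this with Theorem \ref{thm:tutteintmod}. Fix an orientation $D$ of $G$ and write $m=|E(G)|$, $n=|V(G)|$, and let $c$ be the number of connected components. The first step is to count all $A$-flows (not necessarily nowhere-zero). I would choose a spanning forest $T$ of $G$ and argue that an $A$-flow is uniquely determined by its values on the $m-n+c$ edges outside $T$: any assignment of values in $A$ to the non-tree edges extends in exactly one way to a flow, since processing the tree edges from the leaves inward, each conservation equation determines the value on a single remaining tree edge (with coefficient $\pm 1$) from values already fixed. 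Hence the set of $A$-flows has exactly $|A|^{m-n+c}$ elements, a quantity depending only on $|A|$.

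The second step is an inclusion–exclusion over the edges forced to carry the value $0_A$. For a subset $S\subseteq E(G)$, the $A$-flows vanishing on every edge of $S$ are exactly the $A$-flows of the graph $G-S$, of which there are $|A|^{(m-|S|)-n+c(G-S)}$ by the first step, where $c(G-S)$ denotes the number of components of $G-S$. Therefore the number $N(G,A)$ of nowhere-zero $A$-flows satisfies
\[
 N(G,A)=\sum_{S\subseteq E(G)}(-1)^{|S|}\,|A|^{(m-|S|)-n+c(G-S)}.
\]
Each term is $(-1)^{|S|}$ times a power of $|A|$ whose exponent depends only on $G$ and $S$, so $N(G,A)=P(G,|A|)$ for a fixed integer polynomial $P(G,\cdot)$, the flow polynomial. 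In particular $N(G,A)=N(G,\mathbb{Z}_{|A|})$.

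Finally I would assemble the equivalences. Since $N(G,A)=N(G,\mathbb{Z}_{|A|})$, the graph $G$ admits a nowhere-zero $A$-flow if and only if it admits a nowhere-zero $\mathbb{Z}_{|A|}$-flow. By Theorem \ref{thm:tutteintmod}, a nowhere-zero $\mathbb{Z}_{|A|}$-flow yields a nowhere-zero $|A|$-flow, and conversely reducing any nowhere-zero $|A|$-flow modulo $|A|$ gives a nowhere-zero $\mathbb{Z}_{|A|}$-flow (as already observed before Theorem \ref{thm:tutteintmod}). Chaining these equivalences gives the claim. I expect the main obstacle to be the first step: over a general abelian group $A$ the conservation conditions do not form a linear system over a field, so the count $|A|^{m-n+c}$ cannot be read off from a dimension argument and must instead be justified combinatorially through the unique extension along a spanning forest. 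This free-choice argument on the non-tree edges is precisely what forces the count to be group-independent, and it is the crux of the whole theorem.
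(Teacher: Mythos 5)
Your proposal is correct and follows exactly the route the paper indicates for this cited result of Tutte: establish that the number of nowhere-zero $A$-flows is given by a polynomial in $|A|$ (via the count $|A|^{m-n+c}$ of all $A$-flows along a spanning forest and inclusion--exclusion over the zero edges), and then pass between $\mathbb{Z}_{|A|}$-flows and integer $|A|$-flows using Theorem~\ref{thm:tutteintmod}. The spanning-forest extension argument and the final chaining of equivalences are both sound, so there is nothing to add.
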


  Note that a nowhere-zero $k$-flow is also a
  nowhere-zero $k'$-flow for any $k'>k$, and thus the result above
  implies that if a graph $G$ has a nowhere-zero $A$-flow for some abelian group
  $A$, then it has nowhere-zero $B$-flow for all abelian groups $B$
  with $|B|\ge |A|$. 

\subsection{Flow reconfiguration} Given a graph $G$ and an abelian group
$A$, the \emph{reconfiguration graph} $\mathcal{F}(G, A)$ is defined
as follows: the 
vertices are all the nowhere-zero $A$-flows, and two flows $f$ and $g$ are
adjacent if the support of their difference $f-g$ is a cycle (a
2-regular connected subgraph of $G$). The \emph{reconfiguration graph}
$\mathcal{F}(G,k)$ is defined similarly, by considering all
nowhere-zero $k$-flows of $G$. Our main topic of interest is to
understand for which group $A$ or integer $k$ the reconfiguration
graphs  $\mathcal{F}(G, A)$ and $\mathcal{F}(G, k)$ are
connected. Note that when $G$ has no nowhere-zero $A$-flow or $k$-flow
these graphs are empty (and in particular connected).

\smallskip

In view of Theorem \ref{thm:tuttecard} above, a natural question is
whether the connectivity of $\mathcal{F}(G, A)$ and $\mathcal{F}(G,
|A|)$ are related. Let $f$ and $f'$ be two nowhere-zero $\mathbb{Z}_k$-flows in $G$. By
Theorem \ref{thm:tutteintmod}, there exist nowhere-zero $k$-flows
    $g$ and $g'$ in $G$ such that for every $e\in E(G)$, $f(e)\equiv
    g(e)\pmod k$ and $f'(e)\equiv g'(e)\pmod k$.
    We observe  the following.

\begin{observation}\label{obs:intgroup}
If $g$ and $g'$ are in the same connected component of $\mathcal{F}(G,k)$, then $f$ and $f'$ are in the same connected component of $\mathcal{F}(G, \mathbb{Z}_k)$. In particular, if $\mathcal{F}(G, k)$ is
connected, then $\mathcal{F}(G, \mathbb{Z}_k)$ is also connected.
\end{observation}

\begin{proof}
Assume that there exists a path $g_1, g_2, \ldots, g_s$
between $g_1=g$ and $g_s=g'$ in $\mathcal{F}(G, k)$. For every $1\le i
\le s$, consider the nowhere-zero $\mathbb{Z}_k$-flow $f_i$ in $G$
with $f_i(e)\equiv g_i(e) \pmod k$ for any $e\in E(G)$. Note that by definition, $f_1=f$
and
$f_s=f'$. Moreover $\supp(f_i-f_{i+1})=\supp(g_i-g_{i+1})$ for any
$1\le i < s$, and thus any two flows $f_i,f_{i+1}$ are adjacent in
$\mathcal{F}(G, \mathbb{Z}_k)$. It follows that there is a path
between $f$ and $f'$ in $\mathcal{F}(G, \mathbb{Z}_k)$, as desired.
\end{proof}

We note that for every two isomorphic abelian groups $A$ and $B$ and
for any graph $G$, $\mathcal{F}(G, A)$ and $\mathcal{F}(G, B)$ are
isomorphic. In particular one is connected if and only if the other is
connected. By the Chinese remainder theorem, this shows that if
$k_1,\ldots,k_s$ are pairwise coprime and $k=\prod_{i=1}^sk_i$, then
$\mathcal{F}(G, \mathbb{Z}_{k})$ is connected if and only if $\mathcal{F}(G, \mathbb{Z}_{k_1}\times \cdots \times
\mathbb{Z}_{k_s})$ is connected. In particular, when $\mathcal{F}(G, k)$ is
connected, $\mathcal{F}(G, \mathbb{Z}_{k_1}\times \cdots \times
\mathbb{Z}_{k_s})$ is also connected by Observation \ref{obs:intgroup}.

\medskip

When the groups $A$ and $B$ have the same cardinality but are not
isomorphic, it is not true in general that the connectivity of
$\mathcal{F}(G, A)$ is equivalent to the connectivity of
$\mathcal{F}(G, B)$. For instance we will see that $\mathcal{F}(K_4, \mathbb{Z}_4)$ is not connected
(Section \ref{sec:example}) while $\mathcal{F}(K_4, \mathbb{Z}_2\times \mathbb{Z}_2)$ is
connected (Section \ref{sec:z2z2}). 
%In Section \ref{sec:ccl}, we will also see examples of graphs $G$ for which $\mathcal{F}(G,4)$ (and thus $\mathcal{F}(G, \mathbb{Z}_4)$) is  connected
% while $\mathcal{F}(G, \mathbb{Z}_2\times \mathbb{Z}_2)$ is
%not. 
This is in stark contrast with
Theorem \ref{thm:tuttecard}, which states that the existence of a
nowhere-zero $A$-flow only depends on $|A|$.

\medskip

We believe that  the converse of Observation \ref{obs:intgroup}
also holds. 

\begin{problem}
Let $G$ be a graph and $k$ be an integer. Prove that if $\mathcal{F}(G,\mathbb{Z}_k)$ is
connected, then $\mathcal{F}(G, k)$ is also connected.
\end{problem}

We do not know how to prove the result even in the case $k=4$. 

%We emphasize that there seems to be a significant
%difference between integer and group flows in the reconfiguration
%setting. Starting from some nowhere-zero $\mathbb{Z}_k$-flow $f$ in a
%graph $G$, and given any cycle $C$ of length greater than $k$, it is
%always possible to add some non-zero flow along $C$ to obtain
%nowhere-zero $\mathbb{Z}_k$-flow adjacent to $f$ in $\mathcal{F}(G,
%\mathbb{Z}_k)$. On the other hand, if $f$ is a nowhere-zero $k$-flow,
%and flow values $-k+1$ and $k-1$ appear on $C$, then adding any
%flow along $C$ will produce a flow which is not a $k$-flow anymore, no
%matter how large $k$ is compared to $C$.

%\smallskip

%We also note that while a nowhere-zero $k$-flow is also a
%  nowhere-zero $k'$-flow for any $k'>k$, the connectivity of
%  $\mathcal{F}(G, k)$ is not necessarily related to that of $\mathcal{F}(G,
%k')$. The latter might contain more vertices, which can possibly join
%different connected components of the former (adding more connectivity), or on the contrary create
%new connected components (decreasing the connectivity).

\subsection{Recoloring}\label{sec:recol} In the following, we will use classical
results on graph recoloring. Given a graph $G$ and an integer $k$, the
graph $\mathcal{C}(G,k)$ is defined as follows: 
%its vertices are all
% partitions of
%the vertex set into at most $k$ independent sets, and two partitions are adjacent if and
%only if they differ on exactly one vertex. Similarly, $\mathcal{C}^\bullet(G,k)$ is defined as follows: 
its vertices are all
the proper $k$-colorings of $G$, and two colorings are adjacent if and
only if they differ on exactly one vertex. 
%Note that each vertex of
%$\mathcal{C}(G,k)$ corresponds to up to $k!$ vertices of
%$\mathcal{C}^\bullet(G,k)$. Moreover, $\mathcal{C}(G,k)$ can be
%obtained from $\mathcal{C}^\bullet(G,k)$ by identifying all the
%colorings corresponding to the same partition into a single
%vertex, so if $\mathcal{C}^\bullet(G,k)$  is connected, then
%$\mathcal{C}(G,k)$ is also connected. 
%We emphasize that in the literature on recoloring, colorings are
%considered as (unlabelled) partitions of the vertex set, so the focus
%is usually on the graph $\mathcal{C}(G,k)$. For instance,
%in this setting a
%connected bipartite graph $G$ has a unique proper 2-coloring of its vertices, and
%in particular $\mathcal{C}(G,2)$ is connected (it consists of a
%single vertex). Similarly, $K_k$ has a unique proper $k$-coloring and
%thus $\mathcal{C}(K_k,k)$ is connected. On
%the other hand, 
For example, $\mathcal{C}(K_k,k)$ consists of $k!$ isolated
vertices, and in particular is not connected, for any $k\ge 2$.
In the following, we will need results on 
the reconfiguration graph $\mathcal{C}(G,k)$. One such example is Theorem
\ref{coloring_degeneracy} below.

\smallskip

A graph $G$ is \emph{$d$-degenerate} if every subgraph of $G$ contains a
vertex of degree at most $d$. We will use the following
classical result : %of Dyer, Flaxman, Frieze and Vigoda \cite{degeneracy2006}.

\begin{theorem}[\cite{BonsmaCereceda2007, degeneracy2006}]\label{coloring_degeneracy}
    If $G$ is a $d$-degenerate graph, then for any integer $k \geq d + 2$,
    the graph $\mathcal{C}(G,k)$ is connected.
  \end{theorem}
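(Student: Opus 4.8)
The plan is to argue by induction on the number of vertices $n$ of $G$, exploiting the degeneracy. The base case $n\le 1$ is immediate, since then $\mathcal{C}(G,k)$ consists of at most $k$ vertices and any two differ on the single vertex, so it is connected (indeed complete). For the inductive step I would use that every $d$-degenerate graph has a vertex $v$ of degree at most $d$. Set $G'=G-v$; as a subgraph of a $d$-degenerate graph, $G'$ is again $d$-degenerate, so by the induction hypothesis $\mathcal{C}(G',k)$ is connected.

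Next I would lift recoloring sequences from $G'$ to $G$. Given two proper $k$-colorings $c$ and $c'$ of $G$, their restrictions to $G'$ are proper $k$-colorings of $G'$, hence are joined by a path $\gamma_0,\gamma_1,\dots,\gamma_m$ in $\mathcal{C}(G',k)$ with $\gamma_0=c|_{G'}$ and $\gamma_m=c'|_{G'}$. I would replay this path on $G$ while keeping a legal color on $v$. At step $i$ a single vertex $u_i$ changes its color to some value $\alpha$. If $u_i$ is not adjacent to $v$, or if $\alpha$ differs from the current color of $v$, the same recoloring is legal in $G$ and I perform it directly. The only problematic case is when $u_i$ is a neighbor of $v$ and $\alpha$ equals the current color of $v$; there I first recolor $v$ out of the way, and then recolor $u_i$.

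The crucial point, and the place where the hypothesis $k\ge d+2$ is used, is that $v$ can always be displaced. Since $v$ has at most $d$ neighbors, at most $d$ colors appear on them at any moment, so at least $k-d\ge 2$ colors are available for $v$. In the problematic case $v$ currently holds $\alpha$, so by properness no neighbor of $v$ uses $\alpha$, and $\alpha$ is itself one of the available colors; hence at least one available color $\beta\neq\alpha$ exists. Recoloring $v$ to $\beta$ is legal, and since no neighbor of $v$ other than $u_i$ changes, the color $\alpha$ remains unused by the neighbors of $v$, so $u_i$ may then safely take $\alpha$. This keeps the coloring proper throughout, and since $v\notin G'$ these auxiliary moves do not alter the colors on $G'$, so the projection to $G'$ follows $\gamma_0,\dots,\gamma_m$ exactly. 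After replaying the whole path, $G$ is colored so that its restriction to $G'$ equals $c'|_{G'}$, while $v$ may hold a different color; a final single recoloring of $v$ to $c'(v)$ is legal because $c'$ is proper, and yields $c'$. Hence $c$ and $c'$ lie in the same component of $\mathcal{C}(G,k)$.

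I expect the main obstacle to be the bookkeeping in the lifting step: one must check that interspersing the auxiliary recolorings of $v$ never disturbs $G'$ (it does not, as $v$ is outside $G'$) and never breaks properness on the edges at $v$ (guaranteed by the counting above). That counting is exactly what forces $d+2$ rather than $d+1$: with only $d+1$ colors there may be a unique available color at $v$, and if it coincides with $\alpha$ the vertex $v$ cannot be moved aside, so the argument would break.
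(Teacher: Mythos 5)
Your argument is correct and complete: the induction on $|V(G)|$ via a vertex $v$ of degree at most $d$, lifting the recoloring path from $G-v$ and displacing $v$ when it blocks a move (which is always possible since $k-d\ge 2$ colors are free at $v$), is exactly the standard proof of this result. The paper itself does not prove this theorem but cites it from the literature, and your proof is essentially the one given in those references, so there is nothing to correct.
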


In Section \ref{sec:z2z2} we will see another type of reconfiguration
graph, for edge-coloring this time, with a different elementary
operation.

\subsection{A useful example}\label{sec:example}

It can be checked that (any orientation of) $K_4$ has 6 distinct
nowhere-zero $\mathbb{Z}_4$-flows (one of them is given in Figure~\ref{figure:k4z4flow}), so $\mathcal{F}(G,\mathbb{Z}_4)$ has
6 vertices.
Indeed, consider a nowhere-zero $\mathbb{Z}_4$-flow $f$ in $K_4$. Observe that
each vertex must be incident to an edge $e$ with $f(e)=2$, since no
three elements of $\{-1,1\}$ can sum to $0 \pmod 4$. Moreover, each
vertex must be incident to a single such edge $e$, since otherwise the
conservation rule would be violated again. This shows that the edges
with flow value 2 form a perfect matching in $K_4$, and then the other flow values must be organized as another perfect matching with value 1 and the last perfect matching with value 3. We can then
observe that for any directed cycle $C$ that intersects an edge with flow value
2, the values $1,2,3$ must appear on $C$, so no flow can be added to
$C$ without creating a zero edge. It follows that the only cycle in
which some flow can be added is the 4-cycle consisting of the edges
with flow value distinct from 2 (see Figure~\ref{figure:k4z4flow} for an example). It can be checked that this is an
involution, so $\mathcal{F}(G,\mathbb{Z}_4)$ is a perfect matching on
6 vertices (and in particular it is not connected). We will see at the end of Section \ref{sec:z2z2} that this holds more generally for all uniquely 3-edge-colorable cubic planar graphs.

\begin{figure}
\hfill
\begin{subfigure}{0.45\textwidth}
\centering
\includegraphics[scale=.7, page=1]{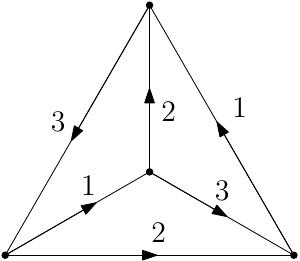}
\subcaption{}
\end{subfigure}
\hfill
\begin{subfigure}{0.45\textwidth}
\centering
\includegraphics[scale=.7, page=2]{k4z4}
\subcaption{}
\end{subfigure}\hfill

\caption{Illustration of a $\mathbb{Z}_4$-flow in $K_4$ and the only cycle along which some flow value can be added.}\label{figure:k4z4flow}
\end{figure}

\smallskip

We can then compute the reconfiguration graph $\mathcal{F}(K_4, 4)$ of
the nowhere-zero 4-flows of $K_4$, depicted in Figure
\ref{figure:k_4_integer_4_flow}. It can be observed that each vertex of
$\mathcal{F}(K_4,\mathbb{Z}_4)$ corresponds to 4 pairwise adjacent nowhere-zero
$4$-flows (all of which are equal to the same nowhere-zero
$\mathbb{Z}_4$-flow if we take all flow values modulo 4 : choose value $+2$ or $-2$ for each edge with value $2 \pmod 4$, and this determines all other values). As a result, each
connected component of $\mathcal{F}(K_4, 4)$ corresponds to one edge
of the matching of $\mathcal{F}(K_4, \mathbb{Z}_4)$, so
$\mathcal{F}(K_4, 4)$ consists of 3 components of size 8 (and in
particular it is not connected).

\begin{figure}[ht]
    \centering
    \includegraphics[width=\textwidth]{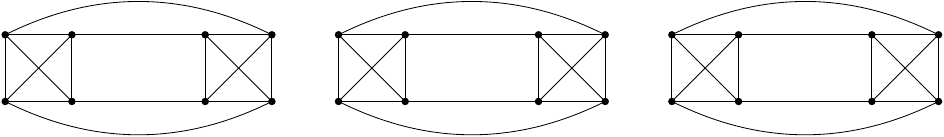}
    \caption{The reconfiguration graph $\mathcal{F}(G, 4)$} of
    nowhere-zero $4$-flows in $K_4$.
    \label{figure:k_4_integer_4_flow}
\end{figure}

\section{Frozen configurations}\label{sec:frozen}

A typical way to show that the reconfiguration graph
$\mathcal{C}(G,k)$ %and $\mathcal{C}^\bullet(G,k)$ 
defined above for colorings is not connected is to
find a proper $k$-coloring of $G$ where for each vertex $v$, all the colors
distinct from that of $v$ appear in the neighborhood of $v$. In this
case no vertex
can be recolored, and the coloring is said to be
\emph{frozen}. Equivalently, the
corresponding vertex in $\mathcal{C}(G,k)$ 
%and $\mathcal{C}^\bullet(G,k)$ 
is isolated. Assuming $\mathcal{C}(G,k)$
%and $\mathcal{C}^\bullet(G,k)$ have 
has at least two vertices, this
implies that these graphs are  not
connected.

It is tempting to try to use similar techniques in the setting of flow
reconfiguration. We now show that if the abelian group $A$ is
sufficiently large, no nowhere-zero $A$-flow can be frozen (in other
words, $\mathcal{F}(G,A)$ has minimum degree at least one), so in this case techniques based on frozen configurations
are inefficient for producing lower bounds in flow reconfiguration.

\smallskip

We need the following simple lemma.

\begin{lemma}\label{colorings_min_colors_connected}
    Let $G$ be a graph with minimum degree at least $3$. If the edges
    of  $G$ are colored (not necessarily properly) using $\ell\ge 3$ colors, then there exists a cycle in $G$ that does not contain any edge of the least frequent color.
\end{lemma}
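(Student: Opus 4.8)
The plan is to delete all edges of the least frequent color and show that enough edges survive to force a cycle; any cycle in the resulting subgraph is then a cycle of $G$ avoiding that color. Write $n=|V(G)|$ and $m=|E(G)|$, and let $c$ be a least frequent color, with edge set $M\subseteq E(G)$. Since the edges are partitioned into $\ell$ color classes whose sizes sum to $m$, the minimum class size is at most the average, so $|M|\le m/\ell\le m/3$, using $\ell\ge 3$.

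Next I would lower-bound the number of surviving edges using the degree hypothesis. Since every vertex has degree at least $3$, we get $2m=\sum_v \deg(v)\ge 3n$, hence $m\ge 3n/2$. Deleting $M$ leaves the spanning subgraph $G'=(V(G),\,E(G)\setminus M)$, and
\[
|E(G')| = m-|M| \;\ge\; m-\tfrac{m}{3} \;=\; \tfrac{2}{3}m \;\ge\; \tfrac{2}{3}\cdot\tfrac{3n}{2} \;=\; n.
\]

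Finally I would invoke the standard fact that a forest on $n$ vertices has at most $n-1$ edges, so any graph on $n$ vertices with at least $n$ edges contains a cycle. As $G'$ has $n$ vertices and at least $n$ edges, it is not a forest, and therefore contains a cycle $C$. Since $C$ is a subgraph of $G'$, none of its edges lies in $M$; that is, $C$ avoids the least frequent color, which is exactly what is required.

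There is no real obstacle here beyond the counting; the argument is a single edge-count estimate. The only point worth flagging is that the three hypotheses are used in tight balance: for the crucial inequality $|E(G')|\ge n$ one needs minimum degree $d$ satisfying $(1-1/\ell)\cdot d/2\ge 1$, i.e.\ $d\ge 2\ell/(\ell-1)$. Since $2\ell/(\ell-1)\le 3$ for all $\ell\ge 3$, with equality exactly at $\ell=3$, the minimum degree bound of $3$ is precisely what the worst case $\ell=3$ demands, and weakening either hypothesis would break the estimate.
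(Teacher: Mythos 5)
Your proof is correct and follows essentially the same route as the paper: bound the least frequent class by $m/\ell$, use $m\ge 3n/2$ from the degree condition to show at least $n$ edges survive deletion, and conclude that the surviving spanning subgraph is not a forest. The closing remark on the tightness of the hypotheses is a nice addition but not needed.
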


\begin{proof}
    Let $n= |V(G)|$ and $m=|E(G)|$. Since the minimum degree of $G$ is
    at least $3$, we have $2m = \sum_{v \in V(G)} d(v) \geq 3n$, and
    thus $ m \geq \frac{3n}{2}$. Let $c$ be the least frequent of the
    $k$ 
    colors (possibly, $c$ does not appear at all). As the $m$ edges are partitioned into $\ell \geq 3$ colors,
    the number of edges colored $c$ is at most $\frac{m}{\ell}$. It
    follows that there are at least \[m \cdot \left(1 - \frac{1}{\ell}\right)\ge
      \frac{2m}{3} \ge \frac23 \cdot \frac32\cdot n=n\] edges not colored with $c$.  A graph on $n$ vertices with at least $n$ edges must contain a cycle, therefore, the subgraph of $G$ induced by the edges not colored $c$ contains a cycle, which completes the proof.
  \end{proof}

  We obtain the following as a simple consequence.

\begin{theorem}\label{thm:frozenabel}
    For any 2-edge-connected graph $G$ and abelian group $A$ with $|A| \geq
    6$ or $A = \mathbb{Z}_2 \times \mathbb{Z}_2$, if the reconfiguration graph
    $\mathcal{F}(G, A)$ is non-empty, then it has minimum degree at least 1.
\end{theorem}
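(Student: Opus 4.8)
The plan is to show that every nowhere-zero $A$-flow $f$ admits an elementary move, i.e. is not isolated in $\mathcal{F}(G,A)$. The key is to encode the obstruction to a move as an edge-coloring and then invoke Lemma~\ref{colorings_min_colors_connected}. Given an oriented cycle $C$ and an element $c\in A$, adding $c$ around $C$ replaces $f(e)$ by $f(e)\pm c$ on each edge $e\in C$ (the sign depending on whether $e$ agrees with the orientation of $C$). Since $f(e)\pm c=0$ forces $f(e)\in\{c,-c\}$, the result is again a nowhere-zero flow as soon as no edge of $C$ has flow value in $\{c,-c\}$, and it is genuinely different from $f$ (so that $f$ and the result are adjacent in $\mathcal{F}(G,A)$) as soon as $c\neq 0$. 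Accordingly, I color each edge $e$ by its \emph{pair-class} $\gamma(e)=\{f(e),-f(e)\}$; as $f$ is nowhere-zero, $0$ lies in no class. Writing $t$ for the number of distinct pair-classes used by $f$, a move exists whenever we can find a cycle $C$ and a nonzero $c$ whose pair-class $\{c,-c\}$ is disjoint from $\{\gamma(e):e\in C\}$.

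When $t\le 2$, I take $c$ outside the \emph{global} forbidden set $F=\{0\}\cup\bigcup_{e}\gamma(e)$, which has size at most $1+2t\le 5$ in general, and at most $1+t\le 3$ when every nonzero element is an involution (as for $A=\mathbb{Z}_2\times\mathbb{Z}_2$). Hence $A\setminus F\neq\emptyset$ whenever $|A|\ge 6$, and also when $A=\mathbb{Z}_2\times\mathbb{Z}_2$ (where $|A|=4>|F|$); picking any cycle of $G$ (one exists since $G$ is $2$-edge-connected) and adding such a $c$ around it yields a move. The same idea disposes of any \emph{monochromatic} cycle $C$, one all of whose edges share a single pair-class $\{a,-a\}$, regardless of $t$: here $|A|\ge 4$ already guarantees some $c\in A\setminus\{0,a,-a\}$, and adding $c$ around $C$ is a move.

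It remains to treat $t\ge 3$ when no monochromatic cycle exists. Here I apply Lemma~\ref{colorings_min_colors_connected} to the pair-class coloring, which requires minimum degree at least $3$. To arrange this I suppress all degree-$2$ vertices of $G$: along a path of internal degree-$2$ vertices the conservation law forces $f(e')=\pm f(e'')$ for consecutive edges, so the pair-class is constant and descends to the suppressed edge, and every cycle of the reduced graph $G'$ lifts to a cycle of $G$ with the same set of pair-classes. Since a loop of $G'$ corresponds exactly to a monochromatic cycle of $G$, the absence of such cycles makes $G'$ loopless with minimum degree at least $3$, while the number of colors is still $t\ge 3$. Lemma~\ref{colorings_min_colors_connected} then produces a cycle of $G'$ avoiding the least frequent pair-class $\{a,-a\}$; lifting it to $G$ and adding the nonzero value $a$ gives a move. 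As $f$ was arbitrary, $\mathcal{F}(G,A)$ has minimum degree at least $1$. I expect the main obstacle to be precisely this reduction to minimum degree $3$: one must verify that suppressing degree-$2$ vertices is compatible with the pair-class coloring and that the degenerate outcomes (loops in $G'$, or $G$ being a single cycle) are exactly the monochromatic-cycle situations already handled. Once this bookkeeping is in place, everything reduces cleanly to the counting in Lemma~\ref{colorings_min_colors_connected}.
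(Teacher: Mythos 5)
Your proof is correct and follows essentially the same route as the paper's: colour each edge by the pair $\{f(e),-f(e)\}$, suppress degree-$2$ vertices to reach minimum degree $3$, and invoke Lemma~\ref{colorings_min_colors_connected} to find a cycle avoiding some pair-class $\{a,-a\}$, along which the nonzero value $a$ can safely be added. Your extra case analysis ($t\le 2$ and monochromatic cycles) is avoided in the paper by applying the lemma to the full palette of $\lceil (|A|-1)/2\rceil\ge 3$ \emph{possible} pair-classes rather than only those that occur, though your explicit treatment of loops created by the suppression (via the monochromatic-cycle case) is a detail the paper's proof passes over in silence.
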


\begin{proof}
  Let $H$ be the graph obtained from $G$ by repeatedly suppressing all
  vertices of degree 2 (that is, replacing every path $P$ in which all
  internal vertices have degree two by an edge between the endpoints
  of $P$). Note that $H$ has minimum degree at least 3 and
  nowhere-zero $A$-flows in $H$ are in bijection with nowhere-zero
  $A$-flows in $G$. Consider a nowhere-zero $A$-flow in $G$, and the
  corresponding nowhere-zero $A$-flow $f$ in $H$. For each $e\in
  E(H)$, color $e$ with the set $\{f(e),-f(e)\}$. If $A=\mathbb{Z}_2
  \times \mathbb{Z}_2$, there are 3 possible colors ($01$, $10$ and
  $11$), and if $|A|\ge 6$ there are at least $\lceil
  \tfrac12(|A|-1)\rceil\ge 3$ possible colors. By Lemma
    \ref{colorings_min_colors_connected}, there is a non-zero element
    $a\in A$ and a cycle $C$ in $H$ (and thus in $G$), on which
    no edge has flow value $\pm a$. We can then add a flow whose
    support is $C$ with flow values $\pm a$, and the resulting flow remains
    nowhere-zero. This shows that every vertex of $\mathcal{F}(G, A)$
    has degree at least 1.
  \end{proof}
  
  We note that when $|A|\ge 7$, the proof of Theorem \ref{thm:frozenabel} can be
refined to show that the minimum degree of 
$\mathcal{F}(G, A)$  is
at least linear in the number of vertices of degree at least 3 in
$G$.

\medskip

It was pointed out to us by Zolt\'an Szigeti that the proof of Theorem \ref{thm:frozenabel} can be extended to work also in the case $A=\mathbb{Z}_4$, by observing that in a minimum counterexample, the edges with flow value $2 \pmod 4$ form a matching, and thus there are at most $n/2$ such edges. It follows that there are at least $3n/2-n/2=n$ edges with flow values $\pm 1 \pmod 4$, and thus there is a cycle $C$ whose flow values are $\pm 1 \pmod 4$ (we can then safely add $2 \pmod 4$ along $C$, and the resulting $\mathbb{Z}_4$-flow is still nowhere-zero). 

\medskip

The proof of the variant of Theorem \ref{thm:frozenabel} for integer flows is surprisingly more direct than in the group case, and works for all $k\ge 2$. 

  \begin{theorem}\label{thm:frozenint}
    For any 2-edge-connected graph $G$ and integer $k\ge 2$, if the
    reconfiguration graph
    $\mathcal{F}(G, k)$ is non-empty, then it has minimum degree at least 1.
\end{theorem}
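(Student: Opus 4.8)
The plan is to exhibit, for every nowhere-zero $k$-flow $f$, a single cycle $C$ of $G$ together with an integer amount $t \neq 0$ that may be added along $C$ while keeping the flow nowhere-zero and bounded in absolute value by $k$. The feature that makes the integer case shorter than the group case is that we are free to add \emph{any} integer $t$ along the cycle, rather than a fixed group element; this extra degree of freedom is what drives the argument.

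First I would reorient $G$ so that every edge carries a positive flow value, that is, work with $|f|$ on the orientation $D_f$ pointing in the direction of the flow. This turns $f$ into a circulation $g$ with $g(e) \in \{1, \dots, k-1\}$ for every edge. Since $g$ is positive everywhere and the edge set is nonempty, the conservation condition forces every vertex incident to an edge to have both an incoming and an outgoing arc, so any directed walk can be extended until it revisits a vertex, yielding a simple directed cycle $C$ of $D_f$, which is a cycle (a $2$-regular connected subgraph) of $G$. The choice of reference orientation is immaterial here: the operation "add $t$ around $C$" is intrinsic to the cyclic sense of traversal, the result $f'$ is again a circulation by construction, and $\supp(f'-f) = C$ as soon as $t \neq 0$.

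The heart of the argument is to show that some nonzero $t$ can be added along $C$. Adding $t$ replaces each value $g(e)$ by $g(e)+t$, and the outcome is a legal nowhere-zero $k$-flow precisely when $g(e)+t \in \{-(k-1), \dots, k-1\} \setminus \{0\}$ for every $e \in C$. The membership in $\{-(k-1),\dots,k-1\}$ confines $t$ to the integer interval $[a,b]$ with $a = -(k-1) - \min_{e \in C} g(e)$ and $b = (k-1) - \max_{e \in C} g(e)$. As all values $g(e)$ lie in $\{1, \dots, k-1\}$, their spread satisfies $\max_{e\in C} g(e) - \min_{e\in C} g(e) \le k-2$, whence $b - a = 2(k-1) - (\max g - \min g) \ge k$, so the interval contains at least $k+1$ integers. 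The condition $g(e)+t \neq 0$ forbids only the values $t = 0$ and $t = -g(e)$ for $e \in C$, all of which lie in $\{-(k-1), \dots, 0\}$; hence at most $k$ integers are excluded. Comparing the counts, at least one admissible value of $t$ survives, and it is automatically nonzero since $0$ is among the forbidden values.

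Setting $f'$ equal to $f$ plus the integer circulation that sends $t$ around $C$ then yields a nowhere-zero $k$-flow with $\supp(f'-f) = C$, so $f$ has a neighbor in $\mathcal{F}(G,k)$, proving the minimum degree is at least $1$. The step I expect to carry the actual content is the counting in the previous paragraph: one must verify that the length of the feasible interval for $t$, governed by the bounded spread of the values along a \emph{single} cycle, strictly exceeds the number of forbidden values. Everything else (existence of the directed cycle, conservation of $f'$, and the identification of its support with $C$) is routine.
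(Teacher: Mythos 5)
Your proof is correct and follows essentially the same route as the paper's: reorient so that all flow values are positive, extract a directed cycle $C$, and shift the flow along $C$ by a constant. The only difference is the last step, where the paper simply takes $t=-k$ (sending each value $g(e)\in\{1,\ldots,k-1\}$ to $g(e)-k\in\{-(k-1),\ldots,-1\}$, which is immediately seen to be admissible), whereas you establish the existence of a suitable $t$ by a counting argument over the feasible interval; the explicit choice $t=-k$ is one of the values your count produces.
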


\begin{proof}
    Consider a nowhere-zero $k$-flow of $G$, and assume without loss of generality (up to reversing some edges $e$ and replacing their flow value $f(e)$ by $-f(e)$) that all the edges of $G$ have positive flow value. Then in the underlying orientation of $G$, every vertex has non-zero out-degree, and in particular there is a directed cycle $C$. The values appearing on $C$ are in the set $\{1,\ldots,k-1\}$, so we can subtract $k$ to all flow values along $C$ and the resulting flow is still a nowhere-zero $k$-flow. It is adjacent to $f$ in  $\mathcal{F}(G, k)$, as the two flows only differ on $C$.
\end{proof}

Theorems \ref{thm:frozenabel} and the subsequent remark on $\mathbb{Z}_4$-flows show that except possibly when $|A|\le 3$ or $A=\mathbb{Z}_5$, no frozen nowhere-zero $A$-flow exists. When $|A|\le 3$ it is easy to construct frozen flows, so only the case of $\mathbb{Z}_5$-flows remains. 

\medskip

In the remainder of this section, we provide an example of a graph $G$ such that the flow reconfiguration graph $\mathcal{F}(G,\mathbb{Z}_5)$  contains an isolated vertex (a frozen nowhere-zero flow). Since by multiplying every flow-value by a non-zero element one still retains a flow, every graph with some nowhere-zero $\mathbb{Z}_5$-flow also contains at least two distinct nowhere-zero $\mathbb{Z}_5$-flows, and so this indeed shows that $\mathcal{F}(G,\mathbb{Z}_5)$ is not connected. By Observation~\ref{obs:intgroup}, we then also have that $\mathcal{F}(G,5)$ is not connected.

\begin{theorem}\label{thm:Z5frozen}
There exists a graph $G$ with a nowhere-zero $\mathbb{Z}_5$-flow which forms an isolated vertex in $\mathcal{F}(G,\mathbb{Z}_5)$. In particular, $\mathcal{F}(G,\mathbb{Z}_5)$ and $\mathcal{F}(G,5)$ are disconnected.
\end{theorem}

Instead of directly showing the example demonstrating the theorem, let us give an equivalent characterization of frozen nowhere-zero $\mathbb{Z}_5$-flows which was instrumental in finding the example in Theorem~\ref{thm:Z5frozen}. Note that since $4=-1$ and $3=-2$ in $\mathbb{Z}_5$, possibly by suitably changing the underlying orientation it is clear that we may w.l.o.g.\ restrict ourselves to looking for frozen $\mathbb{Z}_5$-flows taking only values $1$ or $2$. 

%We now prove that $G$ has a frozen nowhere-zero $\mathbb{Z}_5$-flow where all (oriented) edges have flow value 1 or 2, if and only if the vertices of $G$ can be mapped to the grid $\mathbb{Z}^2$ such that all edges with flow value 2 are horizontal and oriented towards the right, all edges with flow value 1 are vertical and oriented towards the bottom, and all rows and columns induce forests.
\begin{lemma}\label{lem:Z5frozen}
Let $G$ be a graph and $D$ be an orientation of $G$. Let $f:E(G)\rightarrow \{1,2\}\subseteq \mathbb{Z}_5$ be a $\mathbb{Z}_5$-flow in $G$ with respect to $D$. Then $f$ is an isolated vertex of $\mathcal{F}(G,\mathbb{Z}_5)$ if and only if there exist maps $\eta_1, \eta_2:V(G)\rightarrow \mathbb{Z}$ such that the following hold.
\begin{itemize}
    \item For each $i\in \mathbb{Z}$, the induced subgraphs $G[\eta_1^{-1}(i)]$ and $G[\eta_2^{-1}(i)]$ are forests.
    \item For every arc $(u,v)\in D$ with $f(uv)=1$, we have $\eta_1(u)=\eta_1(v)$ and $\eta_2(u)<\eta_2(v)$.
    \item For every arc $(u,v)\in D$ with $f(uv)=2$, we have $\eta_1(u)<\eta_1(v)$ and $\eta_2(u)=\eta_2(v)$.
\end{itemize}
\end{lemma}
\begin{proof}
\noindent
\begin{itemize}\item[$\Rightarrow$] Suppose first that $f$ is an isolated vertex of $\mathcal{F}(G,\mathbb{Z}_5)$, and let us show that then maps $\eta_1, \eta_2$ with the desired properties exist.  
In the following, let $E_1, E_2$, respectively, denote the set of edges of $G$ with flow value $1$ and $2$, respectively. We then claim that $G_1:=(V(G),E_1)$ and $G_2:=(V(G),E_2)$ are forests. Indeed, suppose towards a contradiction that there exists a cycle $C$ in $G$ all whose corresponding arcs in $D$ have the same flow value $i\in \{1,2\}\subseteq \mathbb{Z}_5$. Let $C^+, C^-$ be a partition of $E(C)$ into ``forward''- and ``backward''-edges in the orientation according to $D$ and let $f':E(G)\rightarrow \mathbb{Z}_5$ be defined by $f'(e):=f(e)$ for every $e\notin E(C)$, $f'(e):=f(e)+2i$ for every $e\in C^+$ and $f'(e):=f(e)-2i$ for every $e\in C^-$. It is then easy to see that $f'$ is still nowhere-zero and adjacent to $f$ in $\mathcal{F}(G,\mathbb{Z}_5)$, a contradiction to our assumption that $f$ forms an isolated vertex, and hence $G_1$ and $G_2$ are indeed forests. We next claim that the directed graphs $D_1:=D/E_1$ and $D_2:=D/E_2$ formed from $D$ by contracting all arcs in $E_1$ or $E_2$, respectively, are acyclic (contain no directed cycles). Indeed, suppose towards a contradiction that there exists a directed cycle in $D_i$ for some $i\in \{1,2\}$. Then 
by decontracting the vertices along this directed cycle into oriented paths with edges in $E_i$ we find that there exists a cycle $C$ in $G$ with vertex-sequence $v_0,v_1,\ldots,v_\ell=v_0$ such that for every $1\le j \le \ell$ with $v_{j-1}v_j\in E_{3-i}$ we have that the edge $v_{j-1}v_j$ is oriented from $v_{j-1}$ to $v_j$ in $D$. However, we can now define a new $\mathbb{Z}_5$-flow $f'$ on $G$ by setting $f'(e):=f(e)$ for all $e\notin E(C)$, and $f'(v_{j-1}v_j):=f(v_{j-1}v_j)+3-i$ if $(v_{j-1},v_j)\in D$ as well as $f'(v_{j-1}v_j):=f(v_{j-1}v_j)-3+i$ if $(v_j,v_{j-1})\in D$. It is not hard to check that this flow is again nowhere-zero, contradicting our assumption that $f$ was an isolated vertex in $\mathcal{F}(G,\mathbb{Z}_5)$.

Having established that $D_1, D_2$ are acyclic, we conclude that they admit topological orderings. Translating this back to $G$, we find that there is a linear ordering $U_1,\ldots,U_s$ of the connected components of $G_1$ and a linear ordering $V_1,\ldots,V_t$ of the connected components of $G_2$ such that for every $(u,v)\in D$ with $uv\in E_1$ we have $u\in V_i, v\in V_j$ for some $1\le i<j\le s$ and analogously for every $(u,v)\in D$ with $uv\in E_2$ we have $u\in U_i, v\in U_j$ for some $1\le i<j\le t$.

We now define $\eta_1, \eta_2:V(G)\rightarrow \mathbb{Z}$ by setting $\eta_1(v):=i$ if and only if $v\in U_i$ and $\eta_2(v):=j$ if and only if $v\in V_j$. It remains to check that the three conditions required by the lemma statement are satisfied by $\eta_1,\eta_2$. Indeed, for each $k\in \mathbb{Z}$ and $i\in \{1,2\}$, we have that each of $\eta_i^{-1}(k)$ is either empty or a component of $G_i$. Since $G_i$ is a forest, the only way how $G[\eta_i^{-1}(k)]$ could contain a cycle is if that cycle would use an edge in $E_{3-i}$ between two vertices in the connected component of $G_i$ defined by $\eta_i^{-1}(k)$. However, this would then create a directed loop and hence a directed cycle in  $D_i$, a contradiction since we showed the latter is acyclic. Hence, indeed $G[\eta_1^{-1}(k)],G[\eta_2^{-1}(k)]$ are forests for every $k\in \mathbb{Z}$, as desired. 

Next, consider any arc $(u,v)\in D$ with $f(uv)=i\in \{1,2\}$. Then $uv\in E_i$ and so $u,v$ form part of the same connected component of $G_i$ and hence we have $\eta_i(u)=\eta_i(v)$ by definition of $\eta_i$. By the properties of the orderings $U_1,\ldots,U_s$ and $V_1,\ldots,V_t$ of the components of $G_1$ and $G_2$ we furthermore must have that the arc $(u,v)$ satisfies $\eta_{3-i}(u)<\eta_{3-i}(v)$, as claimed. This concludes the proof of the $\Rightarrow$-direction of the equivalence.
\item[$\Leftarrow$] Suppose $\eta_1,\eta_2:V(G)\rightarrow \mathbb{Z}$ satisfy the three conditions stated in the lemma and let us show that $f$ is an isolated vertex in $\mathcal{F}(G,\mathbb{Z}_5)$. Towards a contradiction, suppose there was a neighbor $f'$ of $f$ in $\mathcal{F}(G,\mathbb{Z}_5)$ and let $C$ be the unique cycle in $G$ which forms the support of the difference flow $f'-f$. Let $v_0,v_1,\ldots,v_\ell=v_0$ be a cyclic order of the vertices on $C$. We then immediately obtain that there exists some element $x\in \mathbb{Z}_5\setminus \{0\}$ such that $f'(e)=f(e)$ for all $e\notin E(C)$ and $f'(v_{j-1}v_j)=f(v_{j-1}v_j)+x$ for every $j$ such that $(v_{j-1},v_j)\in D$ and $f'(v_{j-1}v_j)=f(v_{j-1}v_j)-x$ for every $j$ such that $(v_{j},v_{j-1})\in D$. Possibly by reversing the cyclic order we may w.l.o.g.\ assume that $x\in \{1,2\}$. Since $\eta_1^{-1}(i),\eta_2^{-1}(i)$ induce forests in $G$ for every $i\in \mathbb{Z}$, it directly follows that each of $\eta_1, \eta_2$ takes on at least two distinct values on $C$. In particular, there must exist $j_1, j_2 \in \{1,\ldots,\ell\}$ such that $\eta_1(v_{j_1})<\eta_1(v_{j_1-1})$ and $\eta_2(v_{j_2})<\eta_2(v_{j_2-1})$. By the second and third property satisfied by $\eta_1, \eta_2$ we then find that we must have $f(v_{j_1-1}v_{j_1})=2$ and $(v_{j_1},v_{j_1-1})\in D$ as well as $f(v_{j_2-1}v_{j_2})=1$ and $(v_{j_2},v_{j_2-1})\in D$. This in turn implies that $f'(v_{j_1-1}v_{j_1})=2-x$ and $f'(v_{j_2-1}v_{j_2})=1-x$. Since $x\in \{1,2\}$, one of these two values must be $0$. This is the desired contradiction since $f'$ was initially assumed to be a nowhere-zero $\mathbb{Z}_5$-flow. This concludes the proof of the $\Leftarrow$-direction of the equivalence.
\end{itemize}
\end{proof}

\begin{figure}[htb]
    \centering
    \includegraphics[scale=1]{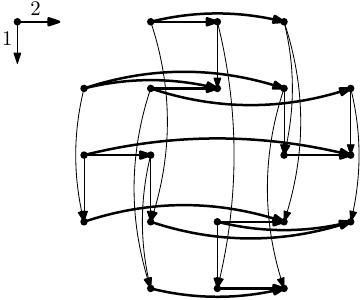}
    \caption{A graph $G$ with a frozen nowhere-zero $\mathbb{Z}_5$-flow.}
    \label{fig:Z5frozen}
\end{figure}

In order to prove Theorem~\ref{thm:Z5frozen}, it remains to show that a graph $G$ and flow $f$ as in Lemma~\ref{lem:Z5frozen} exist. A particular instance of a graph as in the lemma that is natural to look for is if the vertex-set of $G$ is identified with a finite set of points on the integer grid $\mathbb{Z}\times \mathbb{Z}$ and the functions $\eta_1, \eta_2$ simply map vertices to their respective $x$- and $y$-coordinates. The conditions in the lemma can then be reformulated as saying that every column and every row of the grid should induce a forest, and that all $1$-arcs go in the same direction along columns while all $2$-arcs go in the same direction along rows.

Indeed, with the generous help of Yuval Wigderson and Patryk Morawski and the tool AlphaEvolve~\cite{Novikov2025} we managed to find such an example of a frozen $\mathbb{Z}_5$-flow on $20$ vertices, which is depicted in Figure~\ref{fig:Z5frozen}. The validity of the flow conditions can easily be verified by hand from the figure, as well as the fact that the remaining conditions of Lemma~\ref{lem:Z5frozen} are satisfied.

\medskip

\noindent \emph{Proof of Theorem \ref{thm:Z5frozen}.} Consider the $\mathbb{Z}_5$-flow of Figure \ref{fig:Z5frozen} and apply Lemma \ref{lem:Z5frozen}. \hfill $\Box$

\section{The case of \texorpdfstring{$\mathbb{Z}_2\times \mathbb{Z}_2$}{Z2 times Z2}}\label{sec:z2z2}

In the introduction we have seen one of the few possible ways to define
coloring reconfiguration, which will be crucial in the next section to
prove results on flow reconfiguration in planar graphs by duality.
In this section we relate $(\mathbb{Z}_2\times \mathbb{Z}_2)$-flow
reconfiguration to the reconfiguration of edge-colorings using a
different type of elementary operation, \emph{Kempe
changes}, which we define below. It should be noted that, in the literature on Kempe change recoloring (unlike  for single-edge or single-vertex recoloring which we have seen in Section \ref{sec:recol} and which will be the topic of the next section), edge-colorings are
considered as \emph{unlabelled} partitions of %the vertex set (for vertex-colorings) or
 the edge set, or equivalently they are considered up to color permutation.  Note that every unlabelled edge-coloring of a graph $G$ with $k$ unlabelled colors corresponds to 
 $k!$ distinct (labelled) $k$-edge-colorings of $G$.  \textbf{In the remainder of this section, all edge-colorings and color classes are considered to be unlabelled, unless specified otherwise.}

\smallskip

Given a proper edge-coloring $c$ in a graph $G$, and two colors $\{i,j\}$
of $c$, a \emph{Kempe chain}
is a connected component of the subgraph of $G$ spanned by the edges
colored $i$ or $j$. Note that since $c$ is a proper edge-coloring, 
such a subgraph  has maximum degree at most 2 and is bipartite, so a
Kempe chain is a  path or an even cycle. Consider a Kempe chain with
colors $\{i,j\}$ in $G$, and let $c'$ be the edge-coloring of 
$G$ obtained from
$c$ by exchanging colors $i$ and $j$ in the Kempe chain. Note that
$c'$ is still a proper edge-coloring of $G$. We say that $c'$ has been
obtained from $c$ by a \emph{Kempe change}. Let
$\mathcal{K}(G,k)$ be the graph whose vertices are all the (unlabelled) proper
$k$-edge-colorings, with an edge between two colorings if and
only if they
differ by a Kempe change. See \cite{Mohar} for more background on
coloring reconfiguration with respect to Kempe changes.
%\textcolor{red}{Louis: bien dire dès le début que les proper edge-coloring sont des partitions non-étiquetées. En relisant je trouve ça un peu ambigu.}

%\smallskip

%It should be noted that, in the literature on Kempe change recoloring (unlike  for single-edge or single-vertex recoloring), colorings are
%considered as unlabelled partitions of %the vertex set (for vertex-colorings) or
% the edge set : permuting the colors counts as the same vertex in the reconfiguration graph (this is because we can always perfom a sequence of Kempe changes to transpose the labels of two colors, hence we can successively permute all the labels). %(for edge-colorings). %
 %This is a
%bit different from flow reconfiguration, where labels on the edges
%are important, and we have to take this into consideration when
%translating recoloring results into the setting of flow reconfiguration.
%If it were not for this subtlety, we would be able to prove that for any cubic graph
%$G$, $\mathcal{K}(G,3)$ and $\mathcal{F}(G,\mathbb{Z}_2\times
%\mathbb{Z}_2)$ are isomorphic. It is not quite the case unfortunately,
%but we can nevertheless prove the following.

\smallskip

Our main result in this section is the following.

\begin{theorem}\label{thm:z2z2}
If $G$ is a  cubic graph, then $\mathcal{F}(G,\mathbb{Z}_2\times
\mathbb{Z}_2)$  is connected if and only if $\mathcal{K}(G,3)$ is
connected.
\end{theorem}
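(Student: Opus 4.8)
The plan is to establish a correspondence between nowhere-zero $(\mathbb{Z}_2\times\mathbb{Z}_2)$-flows of a cubic graph $G$ and proper $3$-edge-colorings of $G$, and then to show that the elementary operations of the two reconfiguration problems match up. The key observation is that $\mathbb{Z}_2\times\mathbb{Z}_2$ has exactly three non-zero elements, namely $01$, $10$, and $11$, each of which is its own inverse, and any two of them sum to the third. Given a nowhere-zero $(\mathbb{Z}_2\times\mathbb{Z}_2)$-flow $f$, every edge $e$ receives one of these three non-zero values; I would define an edge-coloring $c_f$ by coloring $e$ with $f(e)$, viewing the three non-zero group elements as three (unlabelled) colors. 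The conservation condition at a cubic vertex $v$ forces the three incident edges to carry values summing to $0$, and since no value can be repeated (two equal non-zero elements sum to $0$, forcing the third to be $0$, contradicting nowhere-zero), the three incident edges must carry the three distinct non-zero values. Hence $c_f$ is a proper $3$-edge-coloring. Conversely, any proper $3$-edge-coloring of $G$, read with color classes labelled by the three non-zero elements of $\mathbb{Z}_2\times\mathbb{Z}_2$, satisfies conservation at every vertex and thus defines a nowhere-zero flow. Because Kempe recoloring works with unlabelled colorings, this gives a bijection between the vertex set of $\mathcal{K}(G,3)$ and an appropriate quotient of the flows of $\mathcal{F}(G,\mathbb{Z}_2\times\mathbb{Z}_2)$ — I would need to check that relabelling the three colors corresponds exactly to applying a group automorphism of $\mathbb{Z}_2\times\mathbb{Z}_2$, and that such global relabellings do not affect the connectivity statement.

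Next I would match the elementary operations. A single adjacency in $\mathcal{F}(G,\mathbb{Z}_2\times\mathbb{Z}_2)$ replaces $f$ by $f+g$ where $g$ is a nowhere-zero flow supported on a single cycle $C$. On the cycle $C$, the flow $g$ assigns some non-zero value $a$ to every edge (since $C$ is $2$-regular, conservation along $C$ forces $g$ to be constant, equal to a fixed $a\in\{01,10,11\}$, on all edges of $C$ — this is where the structure of $\mathbb{Z}_2\times\mathbb{Z}_2$ is used, and it is the same reason that a nowhere-zero flow on a cycle is essentially unique). Adding $a$ along $C$ transforms the colors on $C$: an edge colored $b$ becomes colored $a+b$. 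For this to keep $f+g$ nowhere-zero and proper, the edges of $C$ must use only the two colors other than $a$ (the colors $b$ with $a+b\neq 0$, i.e. $b\neq a$), and along $C$ these two colors alternate because $C$ is a cycle in a properly edge-colored cubic graph. This means $C$ is precisely a Kempe chain that is an even cycle, and adding $a$ along $C$ swaps the two colors on that chain — exactly a Kempe change. So each flow-reconfiguration step where the cycle uses two colors corresponds to a Kempe change on a cyclic Kempe chain, and vice versa.

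The main obstacle, and the step I would spend the most care on, is the mismatch between which cycles are allowed. In flow reconfiguration the support cycle $C$ can be any cycle of $G$, including one on which all three colors appear; but adding a constant $a$ along such a $C$ would turn an $a$-colored edge into a $0$-edge, violating nowhere-zero. Hence the only flow-adjacencies that actually occur are those where the support cycle meets exactly two color classes and is therefore a two-colored even cycle, i.e. an entire even Kempe chain. Conversely, Kempe chains that are paths (rather than cycles) have no flow-reconfiguration analogue, since swapping colors along a path would break conservation at the two endpoints. I would argue that path-type Kempe changes are nonetheless not needed: a path Kempe chain in a cubic graph has its two endpoints incident to an edge of the third color, and I would show (this is the delicate combinatorial point) that the cyclic Kempe changes generate the same connectivity relation as all Kempe changes, or alternatively re-examine the definition to confirm that in a cubic graph the relevant Kempe chains between any two colors are exactly the even cycles together with maximal paths, and track carefully how each class of operation acts. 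The cleanest route is to prove the two directions separately: a path of flows in $\mathcal{F}(G,\mathbb{Z}_2\times\mathbb{Z}_2)$ yields, step by step, a path of Kempe changes in $\mathcal{K}(G,3)$ (each flow-step being a cyclic Kempe change, which is a legal Kempe change), and conversely a path in $\mathcal{K}(G,3)$ yields a path in $\mathcal{F}(G,\mathbb{Z}_2\times\mathbb{Z}_2)$ by realising each Kempe change — cyclic or path-type — as a (possibly trivial or reversible) sequence of flow operations.
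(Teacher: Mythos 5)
Your overall architecture matches the paper's: the flow/coloring correspondence at cubic vertices, the identification of flow moves with Kempe changes on cyclic chains, and the two directions handled separately. Two remarks, one minor and one a genuine gap. The minor one: your ``main obstacle'' about path-type Kempe chains is vacuous. In a properly $3$-edge-colored cubic graph every vertex is incident to exactly one edge of each color, so the subgraph spanned by any two color classes is $2$-regular and every Kempe chain is an even cycle; there are no path chains, and no ``delicate combinatorial point'' about generating connectivity from cyclic changes only. (Your hedge ``even cycles together with maximal paths'' is simply wrong for cubic graphs.)

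The gap is in the ``if'' direction. The map from flows to unlabelled colorings is $6$-to-$1$, and a Kempe change $c_i\,c_{i+1}$ only guarantees an edge between \emph{some} lift of $c_i$ and \emph{some} lift of $c_{i+1}$ in $\mathcal{F}(G,\mathbb{Z}_2\times\mathbb{Z}_2)$; consecutive steps of a Kempe path need not agree on the chosen lift. So you must prove that the six labelled flows lying over a fixed unlabelled coloring all lie in one component of $\mathcal{F}(G,\mathbb{Z}_2\times\mathbb{Z}_2)$. You wave at this with ``relabelling \ldots\ corresponds to a group automorphism \ldots\ and such global relabellings do not affect the connectivity statement,'' but an automorphism of the group only induces an automorphism of the reconfiguration graph, i.e.\ a permutation of its components; it does not place a flow and its relabelling in the same component (compare: negation is an automorphism of $\mathbb{Z}_4$, yet $\mathcal{F}(K_4,\mathbb{Z}_4)$ is disconnected and negation fixes no component issue there). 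The paper closes exactly this hole: reduce to a transposition $\pi$ swapping two non-zero values $a,b$, note that $f+\pi\circ f$ is supported on a disjoint union of cycles on each of which it is constant equal to $a+b$, and add $a+b$ along these cycles one at a time, checking that every intermediate flow is nowhere-zero. You need this lemma (or an equivalent) for your reverse direction to go through; without it your argument only bounds the number of components of $\mathcal{F}(G,\mathbb{Z}_2\times\mathbb{Z}_2)$ rather than proving connectivity.
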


Before proving the theorem, we mention a few relevant properties of
nowhere-zero $(\mathbb{Z}_2\times
\mathbb{Z}_2)$-flows and their relations with 3-edge-colorings in
cubic graphs.

Let us write the elements of $\mathbb{Z}_2\times
\mathbb{Z}_2$ as $00,01,10,11$, and note that $a=-a$ for any $a\in \mathbb{Z}_2\times
\mathbb{Z}_2$, so in particular edge orientations are irrelevant when
considering flow values defined in $\mathbb{Z}_2\times
\mathbb{Z}_2$. Another consequence is that for any vertex $v$ of degree 3
in a graph $G$ with a nowhere-zero $(\mathbb{Z}_2\times
\mathbb{Z}_2)$-flow, each flow value appears at most once on the edges
incident to $v$, and in particular each of these three edges carries a
different flow value. As a consequence, any nowhere-zero $(\mathbb{Z}_2\times
\mathbb{Z}_2)$-flow of a cubic graph $G$ is also a proper
3-edge-coloring of $G$. Conversely, any proper
3-edge-coloring of a cubic graph $G$ can be turned into $3!=6$ nowhere-zero $(\mathbb{Z}_2\times
\mathbb{Z}_2)$-flows of $G$.

Next, consider a nowhere-zero $(\mathbb{Z}_2\times
\mathbb{Z}_2)$-flow $f$ in a cubic graph $G$. Let $C$ be a cycle and
write $\mathbb{Z}_2\times
\mathbb{Z}_2\setminus \{00\}=\{a,b,c\}$. Assume that adding the flow value
$a$ along
$C$ to $f$ produces a nowhere-zero $(\mathbb{Z}_2\times
\mathbb{Z}_2)$-flow $g$. Then $f(C)=\{b,c\}$ and the values $b$ and
$c$ alternate along $C$ (in particular $C$ is an even cycle). As
$a+b+c=00$ we have 
$b+a=c$ and $c+a=b$, so that in $g$, the values $b$ and $c$ along $C$ have been
exchanged. This shows that if we view $f$ and $g$ as 3-edge-colorings,
they are related by a Kempe change. Conversely, given two proper
3-edge-colorings $c$ and $c'$ of $G$ related by a Kempe change, we can assign
values from $\mathbb{Z}_2\times
\mathbb{Z}_2\setminus \{00\}$ bijectively to the color classes of $c$
and $c'$ such that the corresponding nowhere-zero $(\mathbb{Z}_2\times
\mathbb{Z}_2)$-flows are adjacent in $\mathcal{F}(G,\mathbb{Z}_2\times
\mathbb{Z}_2)$. We write this as an observation for future reference.

\begin{observation}\label{obs:kempe}
  Let $G$ be a cubic graph. For any edge $f_1f_2$ in  $\mathcal{F}(G,\mathbb{Z}_2\times
\mathbb{Z}_2)$, the 3-edge-colorings corresponding to $f_1$ and $f_2$
are adjacent in $\mathcal{K}(G,3)$. Conversely, for any edge  $c_1c_2$ in $\mathcal{K}(G,3)$, there is an edge $f_1f_2$ in  $\mathcal{F}(G,\mathbb{Z}_2\times
\mathbb{Z}_2)$ such that for each $i=1,2$, $c_i$ is the 3-edge-coloring of $G$
corresponding to $f_i$. 
\end{observation}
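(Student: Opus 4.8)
The plan is to make precise the computation carried out in the paragraph immediately preceding the statement. Two structural facts drive both directions. First, since every element $a\in\mathbb{Z}_2\times\mathbb{Z}_2$ satisfies $a=-a$, the conservation condition at a vertex $v$ simply reads that the sum of the flow values on the edges incident to $v$ equals $00$; consequently a $(\mathbb{Z}_2\times\mathbb{Z}_2)$-flow whose support is a single cycle $C$ must take a constant nonzero value $a$ on every edge of $C$ (at each degree-two vertex of the support the two incident flow values sum to $00$, hence coincide, and $C$ is connected). Second, in a cubic graph every proper $3$-edge-coloring uses all three colors at each vertex, so for any pair of colors the subgraph spanned by those two color classes is $2$-regular, and therefore its Kempe chains are exactly cycles, never paths.

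For the forward direction, let $f_1f_2$ be an edge of $\mathcal{F}(G,\mathbb{Z}_2\times\mathbb{Z}_2)$, so that $\supp(f_1-f_2)$ is a cycle $C$. By the first fact, $f_1-f_2$ equals a fixed $a\in\mathbb{Z}_2\times\mathbb{Z}_2\setminus\{00\}$ on all of $C$ and $00$ elsewhere. Writing $\{a,b,c\}=\mathbb{Z}_2\times\mathbb{Z}_2\setminus\{00\}$, nowhere-zeroness of both flows forces $f_1(e)\in\{b,c\}$ for every $e\in C$, and since $f_1$ is a proper $3$-edge-coloring these two values alternate along $C$. I would then argue that $C$ is precisely a Kempe chain for the pair $\{b,c\}$: each vertex of $C$ carries exactly one $b$-edge and one $c$-edge in all of $G$, and both of these lie on $C$, so $C$ is a connected component of the $\{b,c\}$-subgraph. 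Passing from $f_1$ to the flow obtained by adding $a$ on each edge of $C$ (namely $f_2$) interchanges $b$ and $c$ along $C$, since $b+a=c$ and $c+a=b$, which is exactly the Kempe change on $C$; hence the corresponding $3$-edge-colorings are adjacent in $\mathcal{K}(G,3)$.

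For the converse, let $c_1c_2$ be an edge of $\mathcal{K}(G,3)$, so that $c_2$ arises from $c_1$ by swapping two colors on a Kempe chain $P$. Of the three color classes of $c_1$, exactly one is untouched by this swap; I assign it the value $a$ and assign $b$ and $c$ to the other two classes, obtaining a function $f_1\colon E\to\mathbb{Z}_2\times\mathbb{Z}_2$. This $f_1$ is a nowhere-zero $(\mathbb{Z}_2\times\mathbb{Z}_2)$-flow corresponding to $c_1$, because the three distinct colors at each vertex receive values summing to $a+b+c=00$. By the second structural fact $P$ is a cycle, so the function $f_2$ obtained from $f_1$ by adding $a$ on each edge of $P$ is again a flow; it is nowhere-zero because adding $a$ merely interchanges $b$ and $c$ along $P$, and $\supp(f_1-f_2)=P$ is a cycle. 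Thus $f_1f_2$ is an edge of $\mathcal{F}(G,\mathbb{Z}_2\times\mathbb{Z}_2)$, and by construction $f_2$ corresponds to $c_2$.

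The only genuinely delicate point is the identification of the cycle $C$ with a \emph{full} Kempe chain in the forward direction, and dually the fact that $P$ is a cycle rather than a path in the converse. Both hinge on the $2$-regularity of two-color subgraphs in properly $3$-edge-colored cubic graphs; everything else is a routine translation between adding a constant flow along a cycle and exchanging two colors on it. I would also take care to phrase the Kempe change in terms of unlabelled colorings, consistently with the convention fixed at the start of the section.
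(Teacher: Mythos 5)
Your proposal is correct and follows essentially the same route as the paper, whose ``proof'' of this observation is the informal paragraph immediately preceding it: the difference of two adjacent flows is constant on the cycle since $a=-a$ in $\mathbb{Z}_2\times\mathbb{Z}_2$, adding that constant swaps the other two values, and conversely a Kempe change lifts to a cycle flow. Your write-up is in fact slightly more careful than the paper on the two points it glosses over (that the cycle is a \emph{full} Kempe chain, and that Kempe chains in properly $3$-edge-colored cubic graphs are cycles rather than paths).
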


We can now prove the following, which immediately implies  Theorem \ref{thm:z2z2}.

\begin{theorem}\label{thm:z2z2b}
Let $G$ be a cubic graph. Let $f,g$ be nowhere-zero $(\mathbb{Z}_2\times
\mathbb{Z}_2)$-flows in $G$, and let $c$ and $d$, respectively, be the corresponding
3-edge-colorings of $G$. Then $f$ and $g$ are in the same connected
component of $\mathcal{F}(G,\mathbb{Z}_2\times
\mathbb{Z}_2)$ if and only if $c$ and $d$ are in the same connected
component of $\mathcal{K}(G,3)$.
\end{theorem}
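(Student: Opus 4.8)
The plan is to prove the two implications separately, using the $6$-to-$1$ correspondence between nowhere-zero $(\mathbb{Z}_2\times\mathbb{Z}_2)$-flows and proper $3$-edge-colorings of a cubic graph (each coloring being lifted by the $3!=6$ bijective assignments of the nonzero elements of $\mathbb{Z}_2\times\mathbb{Z}_2$ to its three color classes), together with Observation \ref{obs:kempe}. The forward implication is immediate: given a path $f=f_0,f_1,\dots,f_s=g$ in $\mathcal{F}(G,\mathbb{Z}_2\times\mathbb{Z}_2)$, let $c_i$ be the $3$-edge-coloring corresponding to $f_i$; by Observation \ref{obs:kempe} each pair $c_i,c_{i+1}$ is adjacent in $\mathcal{K}(G,3)$, so $c=c_0,c_1,\dots,c_s=d$ is a walk from $c$ to $d$ in $\mathcal{K}(G,3)$, placing them in the same component. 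All the real content lies in the converse.

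For the converse the crucial auxiliary statement, and the step I expect to be the main obstacle, is a \emph{fiber connectivity} lemma: for any proper $3$-edge-coloring of $G$, the six flows lying above it all belong to a single connected component of $\mathcal{F}(G,\mathbb{Z}_2\times\mathbb{Z}_2)$. To prove it, I would connect any two flows above a fixed coloring that differ by a single transposition of the labels, since transpositions generate $S_3$ and hence connect the whole fiber. Write $(\mathbb{Z}_2\times\mathbb{Z}_2)\setminus\{00\}=\{\alpha,\beta,\gamma\}$, so that $\alpha+\beta+\gamma=00$ and therefore $\alpha+\beta=\gamma$ and $\alpha+\gamma=\beta$. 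Fixing a flow $f$ above the coloring, the two color classes carrying the values $\beta$ and $\gamma$ together form a $2$-regular subgraph of the cubic graph $G$, namely a disjoint union of even cycles $C_1,\dots,C_t$ along each of which $\beta$ and $\gamma$ alternate. Adding $\alpha$ along a single cycle $C_j$ exchanges $\beta$ and $\gamma$ on $C_j$ while leaving all other edges untouched, and produces a nowhere-zero flow adjacent to the previous one in $\mathcal{F}$. Performing these additions successively for $j=1,\dots,t$ therefore traces a path in $\mathcal{F}$ from $f$ to the flow obtained by globally swapping the labels $\beta$ and $\gamma$; this target flow lies above the same coloring, and is exactly the transposition $(\beta\,\gamma)$ of $f$. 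The analogous sequences for $(\alpha\,\beta)$ and $(\alpha\,\gamma)$ finish the lemma.

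Granting the fiber lemma, the converse follows by lifting a path in $\mathcal{K}(G,3)$ to one in $\mathcal{F}(G,\mathbb{Z}_2\times\mathbb{Z}_2)$. Let $c=c_0,c_1,\dots,c_s=d$ be a path in $\mathcal{K}(G,3)$. By Observation \ref{obs:kempe}, each edge $c_ic_{i+1}$ is witnessed by an edge $f_i'f_i''$ of $\mathcal{F}(G,\mathbb{Z}_2\times\mathbb{Z}_2)$ with $f_i'$ above $c_i$ and $f_i''$ above $c_{i+1}$. Starting from $f$ (which lies above $c_0$), the fiber lemma lets me move within the fiber of $c_0$ from $f$ to $f_0'$; I then cross the edge to $f_0''$ above $c_1$, move within the fiber of $c_1$ to $f_1'$, cross to $f_1''$, and so on. After $s$ such steps I reach a flow above $d=c_s$, which the fiber lemma connects to $g$. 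Concatenating these segments yields a path from $f$ to $g$ in $\mathcal{F}(G,\mathbb{Z}_2\times\mathbb{Z}_2)$, completing the proof. A minor point to verify throughout is that each cycle addition keeps the flow nowhere-zero, which holds because $\alpha+\beta=\gamma\neq 00$ and $\alpha+\gamma=\beta\neq 00$; and that every color class is nonempty, so that the cycles $C_j$ exist, which holds since in a cubic graph each color class of a proper $3$-edge-coloring is a perfect matching.
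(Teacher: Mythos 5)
Your proposal is correct and follows essentially the same route as the paper: the forward direction via Observation \ref{obs:kempe}, a fiber-connectivity step showing the six flows above a fixed coloring are connected by realizing each transposition of labels as a sequence of cycle additions (your cycles, the union of the two swapped color classes, are exactly the support of $f+g$ used in the paper), and the converse by alternating fiber moves with lifted Kempe edges. No gaps.
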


\begin{proof} Let $G$ be a
  2-edge-connected cubic graph.

  We first show that given any proper
  3-edge-coloring $c$ of $G$, the corresponding $3!=6$ nowhere-zero $(\mathbb{Z}_2\times
\mathbb{Z}_2)$-flows of $G$ are in the same connected component of $\mathcal{F}(G,\mathbb{Z}_2\times
\mathbb{Z}_2)$.
Consider two  nowhere-zero $(\mathbb{Z}_2\times
\mathbb{Z}_2)$-flows $f$ and $g$ that induce the same edge
partition up to relabeling. Then there is a permutation $\pi : \mathbb{Z}_2\times
\mathbb{Z}_2
\setminus \{00\} \to \mathbb{Z}_2\times
\mathbb{Z}_2\setminus \{00\}$ such that for every edge $e \in
E(G)$, we have $g(e) = \pi(f(e))$. Since any permutation can be
expressed as a product of transpositions, it suffices to consider the
case where $g
$ is obtained from $f$ by swapping two elements $a,
b$, while the third non-zero value remains unchanged.
Consider the  flow $f + g$, and observe that $(f+g)(e)$ is nonzero
only when $f(e)$ and $g(e)$ differ, in which case
$(f+g)(e)=a+b$. Therefore, the support of $f+g$ consists of a union of
edge-disjoint cycles $C_1,\ldots, C_k$ on which the flow $f+g$ is equal
to $a+b$. For every $1\le i \le k$, we now add to $f$ a flow of
value $a+b$ along the cycle $C_i$. Note that the resulting flow is
precisely $g$, and all the intermediate flows are
nowhere-zero, as the cycles $C_i$ are non-zero. This shows that $f$ and $g$ are in the same connected
component of $\mathcal{F}(G,\mathbb{Z}_2\times
\mathbb{Z}_2)$, as claimed.

\smallskip

Now, consider two 3-edge-colorings $c,d$ of $G$, and assume that  two vertices $f$ and $g$ of $\mathcal{F}(G,\mathbb{Z}_2\times
\mathbb{Z}_2)$ associated to $c$ and $d$, respectively (in the
sense that $c$ is the 3-edge-coloring corresponding to $f$ and $d$ is
the 3-edge-coloring corresponding to $g$), are
connected by a path $P=f_1,\ldots,f_k$ in $\mathcal{F}(G,\mathbb{Z}_2\times
\mathbb{Z}_2)$, with $f=f_1$ and $g=f_k$. For
any $1\le i \le k$, let $c_i$ be the 3-edge-coloring associated to
$f_i$. Note
that by Observation \ref{obs:kempe}, any two vertices $c_i,c_{i+1}$
are adjacent in $\mathcal{K}(G,3)$, and thus $c=c_1$ and $d=c_k$ are in the
same component of $\mathcal{K}(G,3)$.

\smallskip

Conversely, consider two vertices $f,g$ of $\mathcal{F}(G,\mathbb{Z}_2\times
\mathbb{Z}_2)$, and assume that the corresponding 3-edge-colorings of $G$ are
connected by a path $P=c_1,\dots, c_k$ in $\mathcal{K}(G,3)$, where
$c_1$ is the coloring corresponding  to $f$, and
$c_k$ is the coloring corresponding to $g$. Recall that the colorings
$c_i$ are considered as unlabelled partitions of the edge set of $G$
into 3 parts. It follows from Observation \ref{obs:kempe} that for each
edge $c_ic_{i+1}$ in $P$, there is an edge $f_i^+f_{i+1}^-$ in  $\mathcal{F}(G,\mathbb{Z}_2\times
\mathbb{Z}_2)$ such that $c_i$ is the 3-edge-coloring of $G$
corresponding to $f_i^+$, and $c_{i+1}$ is the 3-edge-coloring of $G$
corresponding to $f_{i+1}^-$. Write $f_1^-=f$ and $f_k^+=g$. By the
paragraph above, for any $1\le i \le k$, $f_i^+$ and $f_i^-$ are
 in the same connected component of  $\mathcal{F}(G,\mathbb{Z}_2\times
\mathbb{Z}_2)$, since they correspond to the same 3-edge-coloring
$c_i$. It follows that $f$ and $g$ are in the same connected component
of $\mathcal{F}(G,\mathbb{Z}_2\times
\mathbb{Z}_2)$.
  \end{proof}

We emphasize that, while the statements of Theorems \ref{thm:z2z2} and \ref{thm:z2z2b} only
assume that $G$ is cubic, the results are trivial
when $G$ is not 3-edge-colorable (and in particular not 2-edge-connected),
since in this case the reconfiguration graphs are empty.

\medskip

It was proved by Belcastro and Haas \cite{Kempe_change_cubic} that for
every planar bipartite cubic graph $G$, the graph $\mathcal{K}(G,3)$
is connected (see also \cite{Fisk,Mohar}). Using Theorem \ref{thm:z2z2}, we immediately deduce the following.

\begin{corollary}\label{cor:z2z2}
    For any planar bipartite cubic graph $G$, $\mathcal{F}(G, \mathbb{Z}_2 \times \mathbb{Z}_2)$ is connected.
\end{corollary}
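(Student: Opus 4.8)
The plan is simply to chain together the two ingredients assembled immediately above. The whole point of Theorem \ref{thm:z2z2} is that, for a cubic graph, it reduces any question about the connectivity of $\mathcal{F}(G,\mathbb{Z}_2\times\mathbb{Z}_2)$ to the corresponding question about the Kempe-change reconfiguration graph $\mathcal{K}(G,3)$. Since a planar bipartite cubic graph is in particular cubic, this equivalence applies verbatim, and there is no remaining structural work to do: the argument is a direct transfer along the dictionary between nowhere-zero $(\mathbb{Z}_2\times\mathbb{Z}_2)$-flows and $3$-edge-colorings that was set up via Observation \ref{obs:kempe} and Theorem \ref{thm:z2z2b}.

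Concretely, I would first invoke the theorem of Belcastro and Haas \cite{Kempe_change_cubic}, which states exactly that $\mathcal{K}(G,3)$ is connected whenever $G$ is a planar bipartite cubic graph. Then, because $G$ is cubic, Theorem \ref{thm:z2z2} yields the equivalence that $\mathcal{F}(G,\mathbb{Z}_2\times\mathbb{Z}_2)$ is connected if and only if $\mathcal{K}(G,3)$ is. Combining these two facts gives the connectivity of $\mathcal{F}(G,\mathbb{Z}_2\times\mathbb{Z}_2)$ at once, which is the claim.

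There is essentially no obstacle here, since all the substance has already been absorbed into Theorem \ref{thm:z2z2} (the flow--coloring correspondence) and into the cited Belcastro--Haas result. The only points worth a quick verification are that the hypotheses align cleanly -- ``planar bipartite cubic'' trivially implies ``cubic'', so Theorem \ref{thm:z2z2} is applicable without any extra assumption -- and that the degenerate cases cause no trouble. Here a bipartite cubic graph is automatically $3$-edge-colorable (by König's edge-coloring theorem), so both reconfiguration graphs are non-empty; but even were this not the case, the relevant graphs would be empty and hence connected by convention, exactly as noted in the discussion following Theorem \ref{thm:z2z2b}. Thus the corollary follows immediately.
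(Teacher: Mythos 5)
Your proposal is correct and follows exactly the same route as the paper: cite the Belcastro--Haas result that $\mathcal{K}(G,3)$ is connected for planar bipartite cubic graphs, then apply Theorem \ref{thm:z2z2}. The extra remarks on non-emptiness and the degenerate case are harmless and consistent with the paper's own discussion.
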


It was noticed by Mohar \cite{Mohar} that $\mathcal{K}(K_{3,3},3)$ is not
connected. By Theorem \ref{thm:z2z2}, this implies
that $\mathcal{F}(K_{3,3}, \mathbb{Z}_2 \times \mathbb{Z}_2)$ is not
connected, and  shows that the assumption of planarity is
necessary in Corollary \ref{cor:z2z2}. Similarly, it can be deduced
from a recent result of Florek \cite{Flo25} on Kempe equivalence in
proper 4-colorings of planar triangulations that if $G_n$ is a planar cubic graph
with two faces of degree $n$, and all the remaining faces have degree 5,
then $\mathcal{F}(G_n, \mathbb{Z}_2 \times \mathbb{Z}_2)$ has $\lfloor
n/6 \rfloor$ components (see Mohar \cite{Mohar} for the
connection between Kempe equivalence in proper 4-colorings of planar
triangulations and proper 3-edge-colorings of their dual graphs).

\medskip

We conclude this section with another simple but interesting
consequence of Theorem \ref{thm:z2z2}. A cubic graph $G$ is said to be a
\emph{Klee-graph}  if $G=K_4$, or if $G$ is obtained from a Klee-graph
$H$ by replacing a vertex by a triangle. More precisely, we consider a
vertex $v$ of $G$ with neighbors $u_1,u_2,u_3$, we remove $v$, add
three new pairwise adjacent vertices $v_1,v_2,v_3$, and finally we add
edges $u_iv_i$ for any $1\le i \le 3$. It is easy to check that
Klee-graphs have a single proper 3-edge-coloring (it is also known
\cite{fowler1998unique} that these are
the only cubic planar graphs with this property, but this is a
significantly harder result, and we will not need it), and in particular every two color classes form a Hamiltonian cycle. As a
consequence, $\mathcal{K}(G,3)$ is connected (it consists of a single
vertex). By Theorem \ref{thm:z2z2}, we obtain the following.

\begin{corollary}\label{cor:z2z2klee}
  For any
  Klee-graph $G$, $\mathcal{F}(G, \mathbb{Z}_2 \times \mathbb{Z}_2)$ is connected.
\end{corollary}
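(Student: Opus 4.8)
The plan is to reduce the corollary to the structural claim, stated just above, that every Klee-graph admits a unique unlabelled proper $3$-edge-coloring, and then to invoke Theorem \ref{thm:z2z2}. Indeed, if this uniqueness holds, then all vertices of $\mathcal{K}(G,3)$ represent the same unlabelled partition of $E(G)$ into three perfect matchings, so $\mathcal{K}(G,3)$ is a single vertex and is in particular connected. Since the defining operation preserves $3$-regularity (each new vertex $v_i$ is adjacent to the two other triangle vertices and to $u_i$), the graph $G$ is cubic, so Theorem \ref{thm:z2z2} applies and gives that $\mathcal{F}(G,\mathbb{Z}_2\times\mathbb{Z}_2)$ is connected. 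Thus the whole corollary rests on the uniqueness statement.

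I would prove uniqueness by induction on the number of expansion steps used to construct $G$. For the base case $G=K_4$, a proper $3$-edge-coloring is exactly a partition of the six edges into three perfect matchings; as $K_4$ has precisely three perfect matchings and they are pairwise edge-disjoint, this $1$-factorization is forced, giving a single unlabelled coloring. For the inductive step, suppose $G$ arises from a Klee-graph $H$ by replacing a vertex $v$ with neighbors $u_1,u_2,u_3$ by a triangle $v_1v_2v_3$ together with the edges $u_iv_i$. The key device is the contraction that collapses $\{v_1,v_2,v_3\}$ to a single vertex: it recovers $H$ and identifies each pendant edge $u_iv_i$ with the edge $u_iv$ of $H$.

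The heart of the argument is to show that this contraction induces a bijection, equivariant under permutations of the three colors, between the proper $3$-edge-colorings of $G$ and those of $H$. In one direction, given a proper coloring of $G$, the three mutually adjacent triangle edges use all three colors, and a direct check then forces each pendant edge $u_iv_i$ to receive the unique color absent from the two triangle edges at $v_i$; in particular the three pendant edges receive three distinct colors, so the contracted coloring is a proper coloring of $H$. In the other direction, any proper coloring of $H$ assigns three distinct colors to the edges $vu_1,vu_2,vu_3$, and transferring these to the pendant edges $u_iv_i$ forces a unique proper coloring of the triangle (the two edges at $v_i$ must avoid the color of $u_iv_i$, and the resulting constraint system has exactly one solution). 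These two maps are mutually inverse and commute with recoloring, so since all proper colorings of $H$ are equivalent up to relabeling by the induction hypothesis, the same holds for $G$; that is, $G$ has a unique unlabelled proper $3$-edge-coloring.

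The only genuinely delicate point is the rigidity of the gadget in the inductive step: one must verify both that the pendant colors are forced to be pairwise distinct (so that the contraction lands among the proper colorings of $H$ and the induction hypothesis applies) and that three prescribed distinct pendant colors extend to the triangle in exactly one way. Both are finite checks on the six gadget edges, but getting the equivariance and bijectivity bookkeeping right is where a careless argument could slip. Everything else---cubicity, the reduction of connectivity to $\mathcal{K}(G,3)$ being a single vertex, and the final application of Theorem \ref{thm:z2z2}---is immediate.
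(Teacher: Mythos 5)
Your proposal is correct and follows essentially the same route as the paper: both deduce that every Klee-graph has a unique unlabelled proper $3$-edge-coloring, so that $\mathcal{K}(G,3)$ is a single vertex and hence connected, and then conclude via Theorem~\ref{thm:z2z2}. The only difference is that you spell out the inductive proof of the uniqueness claim (correctly), whereas the paper simply asserts it as easy to check.
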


We note that more can be said in Corollary \ref{cor:z2z2klee}: using the
fact that any two color classes in a 3-edge-coloring of a Klee-graph form a Hamiltonian cycle, we can
prove that $\mathcal{F}(G, \mathbb{Z}_2 \times \mathbb{Z}_2)$ is the
complete bipartite graph $K_{3,3}$ (where the vertices correspond to
permutations of $\{01,10,11\}$ and the bipartition corresponds to the signs of
the permutations). We omit the details.

\medskip

We immediately obtain from Corollary \ref{cor:z2z2klee} that $\mathcal{F}(K_4, \mathbb{Z}_2 \times
\mathbb{Z}_2)$ is connected, which contrasts with the fact that
$\mathcal{F}(K_4, \mathbb{Z}_4)$ and $\mathcal{F}(K_4,4)$ are not
connected (see Section \ref{sec:example}). We now show that the observation of Section \ref{sec:example} on $K_4$ extends to all Klee-graphs.

\begin{observation}\label{obs:z4klee}
  For any
  Klee-graph $G$, $\mathcal{F}(G, \mathbb{Z}_4)$ is a perfect matching on 6 vertices.
\end{observation}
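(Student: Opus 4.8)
The plan is to follow the analysis of $K_4$ from Section~\ref{sec:example} and establish three things in turn: that $\mathcal{F}(G,\mathbb{Z}_4)$ has exactly $6$ vertices, that these come in $3$ pairs joined by a canonical ``swap'' edge, and that there are no further edges.

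First I would count the flows and describe their structure. As in Section~\ref{sec:example}, in any cubic graph a nowhere-zero $\mathbb{Z}_4$-flow $f$ has exactly one incident edge of value $2\pmod 4$ at each vertex: no three elements of $\{1,3\}=\{\pm1\}$ have signed sum $0\pmod 4$, and two incident $2$'s would force a third edge to $0$. Hence the value-$2$ edges form a perfect matching $M$, and the complementary $2$-factor $F=G\setminus M$ carries values in $\{\pm1\}$; reducing the conservation law shows the two $F$-edges at each vertex contribute equal signed values, which forces every cycle of $F$ to be even and the $\pm1$ values to alternate along it. Thus $M$ together with a proper $2$-edge-colouring of $F$ is a proper $3$-edge-colouring, so by unique $3$-edge-colourability $M$ is one of the colour classes $M_1,M_2,M_3$ and $F$ is the union of the other two, which by the Klee property is a single Hamiltonian cycle. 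Since the number of nowhere-zero $A$-flows depends only on $|A|$ (the discussion around Theorem~\ref{thm:tuttecard}) and $G$ has exactly $6$ nowhere-zero $(\mathbb{Z}_2\times\mathbb{Z}_2)$-flows (its unique unlabelled $3$-edge-colouring, times $3!$), $\mathcal{F}(G,\mathbb{Z}_4)$ has exactly $6$ vertices, grouped into three pairs according to which colour class equals $M$. Within the pair with $M=M_i$, adding $2$ along $F=G\setminus M_i$ sends each $\pm1$ to $\mp1$ and leaves $M$ untouched, producing the other flow of the pair; this \emph{swap} is an involution, so the three swaps form a perfect matching $PM$ on the $6$ vertices.

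Next I would show these are the only edges. If $g$ is a neighbour of $f$, then $h=g-f$ is a $\mathbb{Z}_4$-flow supported on a single cycle $C$, hence equal to a fixed value $t\in\{1,2,3\}$ along $C$ up to orientation. Applying the structural description to $g$, its value-$2$ set $M_g$ is again a colour class. A short case analysis then shows: when $t=2$ the cycle $C$ must avoid $M$ (otherwise some edge hits $0$), forcing $C=F$ and $g$ equal to the swap of $f$; and when $t\in\{1,3\}$ one obtains $M_g=(M\setminus C)\cup(F\cap C)$, necessarily different from $M$, which—being a colour class—forces $C=M\cup M_g=H_{ij}$, one of the three Hamiltonian cycles. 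Thus every potential neighbour of $f$ other than its swap arises by adding $\pm1$ along some $H_{ij}=M_i\cup M_j$.

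Finally I would rule these out. Writing $M=M_i$, so $F=M_j\cup M_k$, consider adding $\pm1$ along $H_{ij}=M_i\cup M_j$: on $F$ the classes alternate, so all $M_j$-edges carry a single value in the orientation of $F$, and the move keeps $g$ nowhere-zero only if the $M_j$-edges also receive a single value in the orientation of $H_{ij}$, i.e.\ only if $F$ and $H_{ij}$ induce the same orientation, up to a global sign, on their shared class $M_j$. The key claim is that in a Klee-graph two Hamiltonian cycles sharing a colour class always induce \emph{non-constant} relative orientation on it, so some $M_j$-edge is driven to $0$ and the move fails. I would prove this claim by induction on the Klee construction, the base case $K_4$ being exactly the computation of Section~\ref{sec:example}; in the inductive step, expanding a vertex into a triangle reroutes each Hamiltonian cycle locally through the triangle while preserving the traversal of all edges outside it, so the relative orientation on the part of $M_j$ away from the expanded vertex is inherited, and it remains to check that the two new matching edges created at the triangle cannot restore constancy. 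This last verification—tracking orientations through the triangle expansion—is the only delicate point and is where I expect the real work to lie; everything else is bookkeeping around the structure of a single flow. Combining the three parts yields $\mathcal{F}(G,\mathbb{Z}_4)=PM$, a perfect matching on $6$ vertices.
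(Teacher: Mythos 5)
Your structural analysis is correct and in places more self-contained than the paper's. The paper establishes that the value-$2$ edges form one of the three colour classes by an induction on the Klee construction (via a bijection between the nowhere-zero $\mathbb{Z}_4$-flows of $G$ and those of the smaller Klee-graph obtained by contracting the new triangle back to a vertex), whereas you get it directly from unique $3$-edge-colourability by observing that $M$ together with the alternating $2$-colouring of $F$ is a proper $3$-edge-colouring; both routes work, and yours is arguably cleaner. Your explicit classification of the candidate moves (the $t=2$ swap along $F$, and $t\in\{1,3\}$ forcing $C=M\cup M_g=H_{ij}$) is also correct, and is more detailed than what the paper writes down, which essentially stops after the $3\times 2$ count and refers back to the $K_4$ computation of Section \ref{sec:example} for the adjacency structure.

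However, the step you defer --- the ``key claim'' that two Hamiltonian cycles $H_{ij}$ and $H_{jk}$ of a Klee-graph never induce a constant relative orientation on their common colour class $M_j$ --- is exactly the content of the assertion that $\mathcal{F}(G,\mathbb{Z}_4)$ has \emph{no} edges beyond the three swaps, so leaving it as an unexecuted induction is a genuine gap: as written you have only shown that $\mathcal{F}(G,\mathbb{Z}_4)$ has $6$ vertices and \emph{contains} a perfect matching. The claim is true, and it can be closed without tracking orientations through the triangle expansion. Suppose the relative orientation were constant; then you could choose cyclic orientations of $H_{ij}$ and $H_{jk}$ that agree on every edge of $M_j$, and the sum of the two corresponding $\pm1$-valued circulations would be an integer flow taking values $\pm2$ on $M_j$ and $\pm1$ on $M_i\cup M_k$, i.e.\ a nowhere-zero $3$-flow of $G$. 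But a cubic graph has a nowhere-zero $3$-flow if and only if it is bipartite, and no Klee-graph is bipartite since every Klee-graph contains a triangle --- a contradiction. With that lemma supplied (or with your proposed induction actually carried out), your argument is complete.
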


\begin{proof}
As we observed in Section \ref{sec:example}, for every nowhere-zero $\mathbb{Z}_4$-flow in a cubic graph $G$, the edges with flow value $2 \pmod 4$ form a perfect matching $M$ of $G$, and if we orient the remaining edges so that all flow values are equal to $1\pmod 4$, then every  vertex has in-degree 2 or out-degree 2 (in particular $E(G)-M$ is a collection of disjoint even cycles, and for each such cycle $C$, there are precisely two valid orientations of the edges of $C$. 

\smallskip

We now prove by induction on the number of vertices that the 3 color classes in the unique 3-edge-coloring of $G$ are the only perfect matchings of $G$ which are the support of the edges of flow value $2\pmod 4$ in some nowhere-zero $\mathbb{Z}_4$-flow of $G$. As the complement of each color class is a Hamiltonian cycle, there are exactly 2 valid orientations of this cycle, and thus exactly 2 nowhere-zero $\mathbb{Z}_4$-flows for each fixed perfect matching  of edges with flow value $2 \pmod 4$, which concludes the proof.

\smallskip

 \begin{figure}[htb] 
  \centering 
  \includegraphics[scale=1]{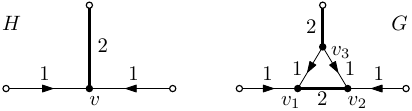}
  \caption{The nowhere-zero $\mathbb{Z}_4$-flows of $G$ are in bijection with the nowhere-zero $\mathbb{Z}_4$-flows of $H$. The case where $v$ has outdegree 2 is symmetric.}
  \label{fig:klee}
\end{figure}

The statement above is clear for $G=K_4$, as this graph has 3 perfect matchings and their complement is a 4-cycle. So we can assume that $G$ was obtained from some Klee-graph $H$ by replacing a vertex $v$ by a triangle $v_1v_2v_3$. But then observe that the nowhere-zero $\mathbb{Z}_4$-flows of $G$ are in bijection with the nowhere-zero $\mathbb{Z}_4$-flows of $H$ (see Figure \ref{fig:klee} for an illustration), and thus the result simply follows  by induction.
\end{proof}

\section{Planar duality}\label{sec:duality}

  A \emph{plane graph} is an embedding of a planar graph $G$ in the
  plane. The \emph{dual graph} $G^*$ of a plane graph $G$ is the plane graph whose
  vertices are the faces of $G$, and where we add an edge $e^*$ between two
  vertices $f_1^*$ and $f_2^*$ for each edge $e$ incident to the two
  faces $f_1$ and $f_2$ in $G$.  If the edges of $G$ are oriented,
  then we orient the edges of $G^*$ in a consistent way: if we look at an edge
  $e$ such that it is oriented towards the top, then the head of $e^*$ lies on the right of $e$, see Figure \ref{fig:dual}.
  
  \begin{figure}[htb] 
  \centering 
  \includegraphics[scale=1]{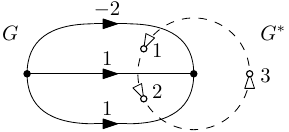}
  \caption{A plane (oriented) graph $G$ and its dual (oriented) graph $G^*$. Any proper coloring of $G^*$ induces a nowhere-zero flow in $G$.}
  \label{fig:dual}
\end{figure}

  \smallskip
  
In \cite{Tutte190}, Tutte established a fundamental result that
connects flows and colorings in plane graphs.

\begin{theorem}[\cite{Tutte190}]\label{equivalence_flow_color}
    Let $G$ be a $2$-edge-connected plane graph. Then $G$ has a
    nowhere-zero $k$-flow if and only if its dual graph $G^*$ has a proper $k$-coloring.
\end{theorem}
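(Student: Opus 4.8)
The plan is to reduce the statement to the equivalent one about $\mathbb{Z}_k$-flows and then exploit the planar duality between the cycle space of $G^*$ and the cut space of $G$. First I would invoke Theorem~\ref{thm:tutteintmod}: a graph has a nowhere-zero $k$-flow if and only if it has a nowhere-zero $\mathbb{Z}_k$-flow, where the forward direction is immediate by reducing all flow values modulo $k$ (each $f(e)$ with $0<|f(e)|<k$ stays nonzero mod $k$) and the converse is exactly Theorem~\ref{thm:tutteintmod}. It therefore suffices to prove that $G$ admits a nowhere-zero $\mathbb{Z}_k$-flow if and only if $G^*$ admits a proper $k$-coloring, where I identify the $k$ colors with the elements of $\mathbb{Z}_k$. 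Throughout I fix the consistent orientation of $G^*$ from the convention of Figure~\ref{fig:dual}, so that each primal arc $e$ with left face $f_L$ and right face $f_R$ corresponds to the dual arc $e^*$ oriented from $f_L^*$ to $f_R^*$. I also note that $2$-edge-connectivity guarantees that $G$ is connected (hence $G^*$ is connected) and bridgeless (hence $G^*$ is loopless), which is what makes both sides of the equivalence well-behaved.

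For the direction from colorings to flows, given a proper $k$-coloring $\phi\colon V(G^*)\to\mathbb{Z}_k$, I would set $f(e)=\phi(f_R^*)-\phi(f_L^*)$ for every arc $e$ of $G$. Since $\phi$ is proper and $f_L^*,f_R^*$ are adjacent in $G^*$, we get $f(e)\neq 0$, so $f$ is nowhere-zero. To verify conservation at a vertex $v$ of $G$, I would list the faces incident to $v$ in cyclic order $f_1,\dots,f_d,f_1$ around $v$; matching the orientation convention, the signed contribution of the arcs at $v$ to $\delta f(v)=f^+(v)-f^-(v)$ becomes a telescoping sum of consecutive color differences $\phi(f_{i+1}^*)-\phi(f_i^*)$, which vanishes because the sequence of faces closes up around $v$. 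Hence $f$ is a nowhere-zero $\mathbb{Z}_k$-flow.

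For the converse, starting from a nowhere-zero $\mathbb{Z}_k$-flow $f$ of $G$, I would build a coloring of $G^*$ by ``integrating'' $f$ across the dual. I fix a reference face $F_0$ and set $\phi(F_0^*)=0$; for any face $F$, I choose a walk in $G^*$ from $F_0^*$ to $F^*$ and define $\phi(F^*)$ as the sum of the values $\pm f(e)$ over the primal arcs crossed, the sign recording whether each dual arc is traversed forwards or backwards (so that a forward crossing of $e^*$ contributes exactly $f(e)$, matching the previous convention). Properness is then immediate, since two faces adjacent via $e^*$ receive colors differing by $\pm f(e)\neq 0$ in $\mathbb{Z}_k$.

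I expect the crux, and the main obstacle, to be the well-definedness of $\phi$, i.e.\ path-independence. The key point is the planar duality between dual cycles and primal cuts: a closed walk in $G^*$ encloses a set $S$ of faces of $G^*$, which are precisely the vertices of $G$ inside the walk, and it crosses exactly the edges of the edge-boundary $\partial S$ separating $S$ from the rest of $G$. The signed sum of $f$ over $\partial S$ equals $\sum_{v\in S}\delta f(v)$, because every arc internal to $S$ is counted with opposite signs at its two endpoints and cancels; this sum is $0$ by the conservation condition $f^+(v)=f^-(v)$. Thus the integral around every closed walk vanishes and $\phi$ is well-defined, which completes the equivalence. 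The remaining work is purely the bookkeeping of orientation signs, ensuring the conventions of Figure~\ref{fig:dual} are applied coherently in both directions.
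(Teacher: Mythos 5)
Your proposal is correct and follows the same route the paper itself outlines after the theorem statement (and that goes back to Tutte): the induced-flow formula $f(e)=c(y^*)-c(x^*)$ with the telescoping verification of conservation around each primal vertex, and the inverse construction by propagating/integrating the flow across dual walks, with well-definedness coming from the duality between dual cycles and primal edge cuts. The only cosmetic difference is your detour through Theorem~\ref{thm:tutteintmod} for the coloring-to-integer-flow direction, which the paper avoids by taking colors in $\{1,\ldots,k\}$ so that the differences are already a nowhere-zero $k$-flow.
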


We now explain how colorings can be constructed from flows, and vice
versa, as it will be crucial in our reconfiguration setting.

    Let $G$ be a $2$-edge-connected plane graph (with underlying
    orientation $D$) and let $A$ be an
    abelian group. Let $c$ be a proper coloring of $G^*$ with colors
    in $A$. A nowhere-zero $A$-flow $f$ in $G$ is said to be
    \emph{induced by} $c$ if the following holds for every edge $e$ of
    $G$: assume that $e^*$ is oriented from $x^*$ to $y^*$
    in $G^*$, then 
    \begin{equation}\label{eq:finduc} f(e)=c(y^*)-c(x^*).\end{equation} See Figure \ref{fig:dual} for an illustration. Tutte proved that $f$ is
    indeed a nowhere-zero $A$-flow. Moreover, if $c$ is a proper $k$-coloring
    (with color values in $\{ 1, \ldots ,k \}$), then $f$ turns out to
    be a nowhere-zero $k$-flow.

Starting from a nowhere-zero $A$-flow $f$ in  $G$, a
proper coloring $c$ of $G^*$ with values in $A$ and inducing $f$ can
be constructed by first choosing an arbitrary vertex $v^*$ of the dual graph
and assigning it an arbitrary value in $A$, and then propagating the
colors to the remaining vertices so that the coloring satisfies \eqref{eq:finduc}. As a result, each nowhere-zero $A$-flow in $G$ is
induced by exactly $|A|$ distinct proper colorings of $G^*$ with
values in $A$. 

We now observe that adjacent colorings in $G^*$ induce adjacent flows
in $G$. We recall that $\mathcal{C}(G,k)$ is the reconfiguration graph of
all proper $k$-colorings of $G$, where two colorings are adjacent if
and only if they differ on a single vertex. 

\begin{lemma}\label{color_to_flow}
    Let $G$ be a 2-edge-connected plane graph, and let $A$ be an
    abelian group. Let $c$ and $c'$ be proper
    colorings of $G^*$ with colors in $A$, such that $c$ and $c'$ are
    adjacent in $\mathcal{C} (G^*,|A|)$. Then the nowhere-zero
    $A$-flows $f$ and $f'$, induced by $c$ and $c'$ respectively, are
    adjacent in $\mathcal{F}(G,A)$. Moreover, if $c$ and $c'$ have
    colors in $\{1,\ldots,k\}$, then the nowhere-zero $k$-flows $f$
    and $f'$ are adjacent in $\mathcal{F}(G,k)$.
\end{lemma}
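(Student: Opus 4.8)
The plan is to show that when two proper colorings $c$ and $c'$ of $G^*$ differ on exactly one vertex, the induced flows $f$ and $f'$ differ exactly on a cycle of $G$. Let $w^*$ be the unique vertex of $G^*$ on which $c$ and $c'$ differ, and recall that the induced flow on any edge $e$ (with $e^*$ oriented from $x^*$ to $y^*$) is given by $f(e)=c(y^*)-c(x^*)$ and $f'(e)=c'(y^*)-c'(x^*)$ via \eqref{eq:finduc}. First I would observe that for an edge $e$ of $G$, the values $f(e)$ and $f'(e)$ coincide precisely when neither endpoint of $e^*$ is $w^*$, since $c$ and $c'$ agree on all vertices other than $w^*$. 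Hence $\supp(f-f')$ is exactly the set of edges $e$ of $G$ whose dual edge $e^*$ is incident to $w^*$; in other words, $\supp(f-f')$ is the set of primal edges dual to the edges of the star of $w^*$ in $G^*$.

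The key geometric fact I would then invoke is the standard primal-dual correspondence: the edges of $G^*$ incident to a fixed dual vertex $w^*$ correspond, in the primal graph $G$, to the edges bounding the face of $G$ associated with $w^*$, and these primal edges form precisely the boundary cycle of that face. So $\supp(f-f')$ is the edge set of the facial cycle $C$ of $G$ bounding the face dual to $w^*$. Since $G$ is $2$-edge-connected, every face boundary is a cycle (a $2$-regular connected subgraph), so $C$ is indeed a cycle in the sense required by the definition of $\mathcal{F}(G,A)$. This shows $f$ and $f'$ are adjacent in $\mathcal{F}(G,A)$, provided we also check they are distinct, i.e.\ that the difference is nonempty: since $c(w^*)\neq c'(w^*)$ and $w^*$ has at least one incident dual edge (as $G$ is connected and has at least one edge), at least one primal edge sees a changed value, so $f\neq f'$ and $C$ is nonempty.

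The only subtle point deserving care is verifying that the change on the face boundary is \emph{consistent}, namely that every edge $e$ with $e^*$ incident to $w^*$ genuinely gets a changed flow value, so that the support is the \emph{entire} facial cycle rather than a proper subset. For an edge $e^*$ with $w^*$ as, say, its head $y^*=w^*$, we have $(f'-f)(e)=\big(c'(w^*)-c'(x^*)\big)-\big(c(w^*)-c(x^*)\big)=c'(w^*)-c(w^*)$, using $c(x^*)=c'(x^*)$; and if $w^*$ is instead the tail, the difference is $c(w^*)-c'(w^*)$. In both cases the value is $\pm\big(c'(w^*)-c(w^*)\big)\neq 0$, so every such edge is in the support, confirming that $\supp(f-f')$ is exactly the facial cycle $C$ and not a strict subset. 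I expect this sign bookkeeping to be the only place requiring attention, and it is routine. Finally, for the $k$-flow statement, I would note that when $c,c'$ take colors in $\{1,\dots,k\}$ the induced flows $f,f'$ are genuine nowhere-zero $k$-flows by Theorem~\ref{equivalence_flow_color} and the construction preceding Lemma~\ref{color_to_flow}, and the identical argument shows $\supp(f-f')$ is a cycle; hence $f$ and $f'$ are adjacent in $\mathcal{F}(G,k)$.
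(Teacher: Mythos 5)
Your proposal is correct and follows essentially the same route as the paper's proof: identify the face of $G$ corresponding to the changed dual vertex, and check that the flow difference equals $\pm\bigl(c'(w^*)-c(w^*)\bigr)\neq 0$ on every edge of that facial cycle and $0$ elsewhere. The only cosmetic difference is that the paper normalizes the orientation (taking the facial cycle clockwise so all dual edges point toward $v^*$, making the difference uniformly $a$), whereas you track the sign case by case; both are fine.
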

    
\begin{proof}
  Let $v^*$ be the unique vertex of $G^*$ on which $c$ and $c'$ differ, and
  let $a=c'(v^*)-c(v^*)\ne 0$. Let $C$ be the cycle of $G$ bounding
  the face of $G$ corresponding to the vertex $v^*$ of $G^*$. Assume
  without loss of generality that $C$ is directed clockwise in the
  underlying orientation of $G$. It follows that all edges of $G^*$
  incident to $v^*$ are oriented towards $v^*$. Let $e$ be an edge of
  $C$. Then $f(e)=c(v^*)-c(u^*)$, where $u^*$ is the endpoint of the
  dual edge $e^*$ distinct from $v^*$. We
  have \[f'(e)=c'(v^*)-c'(u^*)=c'(v^*)-c(v^*)+c(v^*)-c(u^*)=a+f(e).\]
  For every edge $e$ not in $C$, $e^*$ is not incident to $v^*$ and
  thus $f'(e)=f(e)$. It follows that $\supp(f'-f)=C$ and thus $f$ and
  $f'$ are adjacent in $\mathcal{F}(G,A)$ (and in $\mathcal{F}(G,k)$
  if $c$ and $c'$ have colors in $\{1,\ldots,k\}$).
\end{proof}

We note that the converse of Lemma \ref{color_to_flow} does not hold
in general. To see this, observe first that $K_4$ is self-dual. We have seen that
$\mathcal{F}(K_4,\mathbb{Z}_4)$ and $\mathcal{F}(K_4,4)$ contain
some edges and $\mathcal{F}(K_4,\mathbb{Z}_2\times \mathbb{Z}_2)$ is
even connected. However, $\mathcal{C} (K_4,4)$ is an edgeless graph on $4!$
vertices.

\medskip

We now use Lemma \ref{color_to_flow} to translate results on recoloring in
planar graphs to results on group flow reconfiguration.

\begin{theorem}\label{thm:planar}
Let $G$ be a 2-edge-connected plane graph and let $A$ be an abelian
group. Then the diameter of  $\mathcal{F}(G,A)$ is at most the
diameter of $\mathcal{C}(G^*,|A|)$. In particular,  if
$\mathcal{C} (G^*,|A|)$ is connected, then $\mathcal{F}(G,A)$
is also connected.
\end{theorem}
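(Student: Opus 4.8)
The plan is to establish the two claims of Theorem~\ref{thm:planar} together, since the diameter bound immediately subsumes the connectivity statement. Fix a 2-edge-connected plane graph $G$ with underlying orientation $D$, and an abelian group $A$. The key tool is the map $\Phi$ sending a proper coloring $c$ of $G^*$ with values in $A$ to the nowhere-zero $A$-flow $f$ it induces via \eqref{eq:finduc}. The discussion preceding Lemma~\ref{color_to_flow} tells us two things about $\Phi$: first, by Tutte's construction, $\Phi$ is \emph{surjective} onto the vertex set of $\mathcal{F}(G,A)$ (every nowhere-zero $A$-flow is induced by some, in fact by exactly $|A|$, proper colorings of $G^*$); and second, Lemma~\ref{color_to_flow} says $\Phi$ sends edges of $\mathcal{C}(G^*,|A|)$ to edges or \emph{loops} of $\mathcal{F}(G,A)$ (a coloring step that changes a single vertex $v^*$ either produces an adjacent flow, or leaves the flow unchanged when the induced difference along the bounding cycle happens to be zero --- but since $a=c'(v^*)-c(v^*)\ne 0$, Lemma~\ref{color_to_flow} in fact guarantees a genuine edge). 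So $\Phi$ is a graph homomorphism-like map that is surjective on vertices and non-contracting on edges.

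First I would take two arbitrary nowhere-zero $A$-flows $f$ and $f'$ in $\mathcal{F}(G,A)$, and use surjectivity of $\Phi$ to pick proper colorings $c,c'$ of $G^*$ with $\Phi(c)=f$ and $\Phi(c')=f'$. Since $\mathcal{C}(G^*,|A|)$ has diameter $d$, there is a path $c=c_0,c_1,\ldots,c_\ell=c'$ with $\ell\le d$, where consecutive colorings differ on a single vertex. Applying $\Phi$ to this path yields a walk $f=\Phi(c_0),\Phi(c_1),\ldots,\Phi(c_\ell)=f'$ in $\mathcal{F}(G,A)$: by Lemma~\ref{color_to_flow}, each consecutive pair $\Phi(c_i),\Phi(c_{i+1})$ is adjacent in $\mathcal{F}(G,A)$. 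Hence $f$ and $f'$ are joined by a walk of length at most $\ell\le d$, so their distance in $\mathcal{F}(G,A)$ is at most $d$. As $f,f'$ were arbitrary, $\operatorname{diam}(\mathcal{F}(G,A))\le\operatorname{diam}(\mathcal{C}(G^*,|A|))$. The connectivity statement follows immediately: if $\mathcal{C}(G^*,|A|)$ is connected then its diameter is finite, and then so is that of $\mathcal{F}(G,A)$, which forces $\mathcal{F}(G,A)$ to be connected.

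The only subtlety --- and the one point I would state carefully rather than gloss over --- is that a single-vertex recoloring step $c_i\to c_{i+1}$ could a priori leave the induced flow unchanged, i.e.\ $\Phi(c_i)=\Phi(c_{i+1})$, in which case the image is a repeated vertex rather than an edge. This is harmless for the argument (a walk that occasionally stalls still has length $\le\ell$ and still connects its endpoints at distance $\le\ell$), but one should note that it does not even occur here: Lemma~\ref{color_to_flow} is stated with the hypothesis that $c_i$ and $c_{i+1}$ are adjacent in $\mathcal{C}(G^*,|A|)$, meaning they genuinely differ on one vertex, and its conclusion is that the induced flows are adjacent, so each step of the image is a true edge of $\mathcal{F}(G,A)$. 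Thus the image is an honest walk of length exactly $\ell$.

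I expect no serious obstacle: the proof is a clean transport of distances along the surjective edge-preserving map $\Phi$, and every ingredient (surjectivity from Tutte's propagation argument, edge-preservation from Lemma~\ref{color_to_flow}) is already in hand. The main thing to get right is simply the logical direction --- one recolors in $G^*$ and \emph{reads off} the reconfiguration in $G$, rather than the reverse --- and the accompanying remark that $\Phi$ need not be injective and its converse fails (as the excerpt already illustrates with $K_4$), so that the inequality on diameters is genuinely one-directional.
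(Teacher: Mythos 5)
Your proposal is correct and follows essentially the same route as the paper's proof: pick colorings of $G^*$ inducing the two given flows, take a shortest recoloring path between them, and push it forward step by step via Lemma~\ref{color_to_flow} to a path of the same length in $\mathcal{F}(G,A)$. The extra discussion of whether a recoloring step could collapse to a repeated flow is harmless but unnecessary, exactly as you note, since Lemma~\ref{color_to_flow} already guarantees genuine adjacency.
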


\begin{proof}
Let $f$ and $f'$ be two nowhere-zero $A$-flows in $G$, and let $c$ and $c'$ be proper
    colorings of $G^*$ with values in $A$ inducing $f$ and $f'$,
    respectively. Then there is a path $c_1,\ldots, c_s$ between
    $c_1=c$ and $c_s=c'$ in $\mathcal{C}(G^*,|A|)$, of length
    at most the diameter of $\mathcal{C}(G^*,|A|)$. For any
    $1\le i \le s$, let $f_i$ be the nowhere-zero $A$-flow induced by
    $c_i$. By Lemma \ref{color_to_flow}, any two flows $f_i,f_{i+1}$
    are adjacent in $\mathcal{F}(G,A)$, and thus the distance between
    $f=f_1$ and $f'=f_s$ is at most the diameter of $\mathcal{C}(G^*,|A|)$.
  \end{proof}

We now extend Theorem
\ref{thm:planar} to integer flows. Recall that for any integer $k$ and
plane graph $G$, any proper $k$-coloring of $G^*$ induces a
nowhere-zero $k$-flow in $G$. Conversely, consider a nowhere-zero
$k$-flow $f$ in $G$, and let $\bar{f}$ be the nowhere-zero
$\mathbb{Z}_k$-flow of $G$ obtained from $f$ by taking all flow values
modulo $k$. Then there is a proper coloring $\bar{c}$ of $G^*$
with colors in $\mathbb{Z}_k$ which induces $f$. Let $c$ be the proper
$k$-coloring of $G^*$ obtained by replacing every
element of $\mathbb{Z}_k$ by its unique representative in
$\{1,\ldots,k\}$. We say that $c$ \emph{is
associated to $f$}. It would be tempting to think that if $c$ is
associated to $f$, then $c$  induces $f$,
but this is not necessarily the case. Instead, we have the following
weaker property.

\begin{observation}\label{obs:modduality}
Let $f$ be a nowhere-zero $k$-flow in a plane graph $G$ and let $c$ be
a proper $k$-coloring of $G^*$ associated to $f$. Let $f'$ be the
nowhere-zero $k$-flow of $G$ induced by $c$. Then $(f'-f)(e)\equiv 0\pmod k$ for any edge $e$ of $G$.
\end{observation}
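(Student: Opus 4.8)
The plan is to verify the congruence edge by edge, simply unwinding the chain of definitions that relate $f$, $\bar f$, $\bar c$, $c$, and $f'$. Fix an edge $e$ of $G$ and assume, as in \eqref{eq:finduc}, that its dual edge $e^*$ is oriented from $x^*$ to $y^*$ in $G^*$.

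First I would record the three elementary relations already available from the construction preceding the statement. Since $\bar c$ is a proper coloring of $G^*$ with colors in $\mathbb{Z}_k$ inducing the $\mathbb{Z}_k$-flow $\bar f$, equation \eqref{eq:finduc} gives $\bar f(e)=\bar c(y^*)-\bar c(x^*)$ in $\mathbb{Z}_k$. By the definition of $\bar f$ as the reduction of $f$ modulo $k$, we have $f(e)\equiv \bar f(e)\pmod k$. Finally, by the definition of $c$, each value $c(v^*)\in\{1,\dots,k\}$ is the representative of $\bar c(v^*)$, so that $c(v^*)\equiv \bar c(v^*)\pmod k$ for every vertex $v^*$ of $G^*$.

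It then suffices to chain these congruences. Since $f'$ is the nowhere-zero $k$-flow induced by $c$, equation \eqref{eq:finduc} again gives $f'(e)=c(y^*)-c(x^*)$, and therefore
\[
 f'(e)=c(y^*)-c(x^*)\equiv \bar c(y^*)-\bar c(x^*)=\bar f(e)\equiv f(e)\pmod k.
\]
Hence $(f'-f)(e)\equiv 0\pmod k$, and as $e$ was arbitrary the observation follows.

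I do not expect any genuine obstacle here: the entire content is that reduction modulo $k$ commutes with the difference appearing in \eqref{eq:finduc}, so that $c$ and $\bar c$ yield the same flow value up to a multiple of $k$. The only point worth emphasising is exactly the one the authors flag, namely that this does \emph{not} force $f'=f$: the value $c(y^*)-c(x^*)$ is a difference of representatives drawn from $\{1,\dots,k\}$, whereas $f(e)$ may be any integer of the same residue lying in $\{-(k-1),\dots,k-1\}$, so $f'$ and $f$ can differ by $\pm k$ on some edges while remaining congruent modulo $k$.
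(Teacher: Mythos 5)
Your proof is correct and follows essentially the same route as the paper's: both unwind the definitions of $\bar f$, $\bar c$, and $c$ and chain the congruences $f'(e)=c(y^*)-c(x^*)\equiv\bar c(y^*)-\bar c(x^*)=\bar f(e)\equiv f(e)\pmod k$. Your closing remark about why $f'$ need not equal $f$ is a nice addition but matches what the paper already notes before the observation.
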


\begin{proof}
  Let $\bar{f}$ and $\bar{c}$ be as defined in the paragraph above.
Let $e$ be an edge of $G$ and let $e^*=(u^*,v^*)$ be the dual edge of $e$ in
$G^*$. Then \[{f'(e) \pmod k}\equiv {c(v^*)-c(u^*)\pmod
    k}\equiv\bar{c}(v^*)-\bar{c}(u^*)\equiv \bar{f}(e),\] and thus
$f'(e)-f(e)\equiv 0\pmod k$, as desired.
\end{proof}

Next, we will need the following simple result, showing that two flows $f$
and $f'$ as in Observation \ref{obs:modduality} above are connected by
a short path in $\mathcal{F}(G,k)$. 

\begin{lemma}\label{lem:int0}
Let $G$ be a graph and let $k\ge 2$ be an integer. Let $f,f'$ be two
nowhere-zero $k$-flows in $G$, such that $(f'-f)(e)\equiv 0\pmod k$ for any edge $e$ of $G$.  Then $f$ and $f'$ are at distance at most
$|E(G)|$ apart in
$\mathcal{F}(G,k)$. 
\end{lemma}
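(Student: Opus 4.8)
The plan is to exploit the congruence hypothesis to pin down the difference $f'-f$ edge by edge, and then route from $f$ to $f'$ one cycle at a time. First I would observe that $g:=f'-f$ is a $\mathbb{Z}$-flow, being the difference of two flows defined with respect to the same underlying orientation. Since $f$ and $f'$ are nowhere-zero $k$-flows, both $f(e)$ and $f'(e)$ lie in $\{-(k-1),\dots,-1,1,\dots,k-1\}$, so $g(e)=f'(e)-f(e)$ lies in $\{-(2k-2),\dots,2k-2\}$. The only multiples of $k$ in this range (using $k\ge 2$) are $-k,0,k$, so the hypothesis $(f'-f)(e)\equiv 0\pmod k$ forces $g(e)\in\{-k,0,k\}$ for every edge $e$. (Note in passing that $g(e)=k$ can occur only when $f(e)<0<f'(e)$, and symmetrically for $g(e)=-k$.)

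Consequently $\tfrac1k g$ is a $\mathbb{Z}$-flow taking values in $\{-1,0,1\}$. I would orient each edge of $\supp(g)$ according to the sign of $g$ on it; by the conservation condition this oriented subgraph has in-degree equal to out-degree at every vertex, and therefore decomposes into edge-disjoint directed cycles $C_1,\dots,C_t$. In particular the $C_j$ are pairwise edge-disjoint cycles of $G$ whose union is exactly $\supp(g)$, and $g$ equals $\pm k$ around each $C_j$.

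Now I would reconfigure $f$ into $f'$ by adding these cycles one at a time. For $1\le j\le t$, let $g_j$ be the flow equal to $g$ on the edges of $C_j$ and $0$ elsewhere (this is a genuine flow, being $\pm k$ around the cycle $C_j$), and set $f_0=f$ and $f_j=f_{j-1}+g_j$. Then $f_j=f+(g_1+\cdots+g_j)$, so in particular $f_t=f+g=f'$. The crucial point is that adding a whole cycle in a single step sends each of its edges straight to its target value: for $e\in C_j$ we have $f_j(e)=f(e)+g(e)=f'(e)$, which is nonzero and of absolute value less than $k$, while every edge outside $C_1\cup\cdots\cup C_j$ still carries the value $f(e)$. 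Hence each intermediate $f_j$ is again a nowhere-zero $k$-flow, and $\supp(f_j-f_{j-1})=C_j$ is a single cycle, so $f_{j-1}$ and $f_j$ are adjacent in $\mathcal{F}(G,k)$. Since the cycles $C_j$ are edge-disjoint we have $t\le |E(G)|$, yielding a walk of length at most $|E(G)|$ from $f$ to $f'$.

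There is no serious obstacle here: the only genuine computation is the range argument isolating $g(e)\in\{-k,0,k\}$, and the one point requiring care is that each reconfiguration step must move an entire cycle at once, so that every changed edge lands exactly on its final admissible value rather than passing through $0$ or out of range along the way. The bound $|E(G)|$ is deliberately loose (in fact $t\le |E(G)|/2$ since $G$ is loopless) and is all we will need in the sequel.
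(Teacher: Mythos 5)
Your proof is correct and follows essentially the same route as the paper's: the range argument forcing $(f'-f)(e)\in\{-k,0,k\}$, the Eulerian decomposition of the support into edge-disjoint directed cycles with flow value $\pm k$, and the sequential addition of these cycles, each step landing every modified edge directly on its target value. The only cosmetic difference is that the paper normalizes signs by reversing edges so that all differences equal $k$, whereas you keep both signs and orient by the sign of $g$; these are the same argument.
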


\begin{proof}
Observe first that as $f$ and $f'$ are $k$-flows, for every edge $e$ of
$G$, we have $|(f-f')(e)|<2k$. Since $(f-f')(e)\equiv 0\pmod k$, this
implies $(f-f')(e)\in \{-k,0,k\}$. Up to reversing 
a subset of the edges of $G$ in the underlying orientation of $G$, we can assume
without loss of generality that $(f-f')(e)\in \{0,k\}$ for any edge $e$
of $G$, and thus the support of $f-f'$ is an Eulerian digraph. It
follows that the support of $f-f'$ can be decomposed into edge-disjoint directed
cycles, each with flow value $k$. Adding each of these cycles (with
flow value $k$) sequentially to $f$, we obtain a path from $f$
to $f'$ of length at most $|E(G)|$ in $\mathcal{F}(G,k)$, as desired. 
\end{proof}

We are now ready to prove the following version of Theorem
\ref{thm:planar} for integer flows.

\begin{theorem}\label{thm:planarint}
Let $G$ be a 2-edge-connected plane graph and $k\ge 2$ be an
integer. Then the diameter of  $\mathcal{F}(G,k)$ is at most $2|E(G)|$
plus the
diameter of $\mathcal{C}(G^*,k)$. In particular,  if
$\mathcal{C} (G^*,k)$ is connected, then $\mathcal{F}(G,k)$
is also connected.
\end{theorem}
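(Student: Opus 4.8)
The plan is to reduce the integer-flow statement to the group-flow statement already established in Theorem \ref{thm:planar}, paying the cost of the discrepancy between a flow and the coloring associated to it via Observation \ref{obs:modduality} and Lemma \ref{lem:int0}. The key point is that Theorem \ref{thm:planar} gives us control over $\mathcal{C}(G^*,|A|)$-paths and their induced flows, but for integer flows the coloring $c$ \emph{associated} to a flow $f$ need not \emph{induce} $f$ exactly; it only induces a flow $f'$ with $(f'-f)(e)\equiv 0 \pmod k$ for every edge. Lemma \ref{lem:int0} tells us precisely that this defect costs at most $|E(G)|$ steps in $\mathcal{F}(G,k)$.

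First I would take two nowhere-zero $k$-flows $f$ and $f'$ in $G$. To each I associate a proper $k$-coloring of $G^*$: let $c$ be a $k$-coloring associated to $f$ and let $c'$ be a $k$-coloring associated to $f'$, in the sense of the paragraph preceding Observation \ref{obs:modduality}. Let $g$ be the nowhere-zero $k$-flow induced by $c$, and let $g'$ be the one induced by $c'$. By Observation \ref{obs:modduality}, $(g-f)(e)\equiv 0\pmod k$ and $(g'-f')(e)\equiv 0\pmod k$ for every edge $e$, so by Lemma \ref{lem:int0}, $f$ is at distance at most $|E(G)|$ from $g$, and $f'$ is at distance at most $|E(G)|$ from $g'$, in $\mathcal{F}(G,k)$.

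Next I would connect $g$ to $g'$ inside $\mathcal{F}(G,k)$ by going through colorings. Since $c$ and $c'$ are proper $k$-colorings of $G^*$, if $\mathcal{C}(G^*,k)$ is connected there is a path $c_1,\dots,c_s$ from $c_1=c$ to $c_s=c'$ of length at most the diameter of $\mathcal{C}(G^*,k)$. For each $i$ let $g_i$ be the nowhere-zero $k$-flow induced by $c_i$ (using the fact, recalled before Lemma \ref{color_to_flow}, that a proper $k$-coloring of $G^*$ with colors in $\{1,\dots,k\}$ induces a nowhere-zero $k$-flow). By the integer version of Lemma \ref{color_to_flow}, consecutive flows $g_i$ and $g_{i+1}$ are adjacent in $\mathcal{F}(G,k)$, since consecutive colorings differ on a single vertex. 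Hence $g=g_1$ and $g'=g_s$ are at distance at most the diameter of $\mathcal{C}(G^*,k)$. Combining the three pieces by the triangle inequality yields a path from $f$ to $f'$ of length at most $|E(G)|+\mathrm{diam}(\mathcal{C}(G^*,k))+|E(G)|$, giving the claimed bound of $2|E(G)|$ plus the diameter of $\mathcal{C}(G^*,k)$; connectivity of $\mathcal{C}(G^*,k)$ immediately gives connectivity of $\mathcal{F}(G,k)$.

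The only genuinely delicate point, which I would want to state carefully rather than gloss over, is the role of Observation \ref{obs:modduality}: the subtlety is that the coloring associated to an integer flow (obtained by lifting the $\mathbb{Z}_k$-coloring to representatives in $\{1,\dots,k\}$) generally does not induce that very flow, only one congruent to it modulo $k$. Everything else is a bookkeeping application of already-proven lemmas, so I do not expect a real obstacle; the main care is ensuring the endpoints of the coloring path are the colorings \emph{associated} to $f$ and $f'$ (so that Lemma \ref{lem:int0} applies at both ends), rather than colorings inducing them, which need not exist with values in $\{1,\dots,k\}$.
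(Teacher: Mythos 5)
Your proposal is correct and follows essentially the same route as the paper's own proof: associate colorings to the two integer flows, pay at most $|E(G)|$ at each end via Observation \ref{obs:modduality} and Lemma \ref{lem:int0} to move to the flows actually induced by those colorings, and then transport a recoloring path through Lemma \ref{color_to_flow}. You also correctly isolate the one subtle point (that the associated coloring induces only a flow congruent to the original modulo $k$), which is exactly the point the paper's argument hinges on.
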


\begin{proof}
  Let $f$ and $g$ be two nowhere-zero $k$-flows in $G$, and let $c$
  and $d$ be two proper $k$-colorings of $G^*$ associated to $f$ and
  $g$, respectively. Then there is a path $c_1,\ldots,c_s$ in
  $\mathcal{C} (G^*,k)$ between $c=c_1$ and $d=c_s$ of length
  at most the diameter of $\mathcal{C} (G^*,k)$. For any $1\le
  i \le s$, let $f_i$ be the nowhere-zero $k$-flow of $G$ induced by
  $c_i$. By Lemma \ref{color_to_flow}, there is an edge between $f_i$
  and $f_{i+1}$ in $\mathcal{F}(G,k)$ for any $1\le i < s$. By
  Observation \ref{obs:modduality} and Lemma \ref{lem:int0}, $f$ and
  $f_1$ lie at distance at most $|E(G)|$ in $\mathcal{F}(G,k)$ and
  similarly $g$ and
  $f_s$ lie at distance at most $|E(G)|$ in $\mathcal{F}(G,k)$. It
  follows that $f$ and $g$ lie at distance at most $2|E(G)|$ plus the
  diameter of $\mathcal{C}(G^*,k)$ in $\mathcal{F}(G,k)$.
\end{proof}

For our first application, consider the graph $D_m$ on two vertices
$u$ and $v$, with
$m\ge 2$ multiple
edges connecting $u$ and $v$. It can be checked that $\mathcal{F}(D_m,\mathbb{Z}_3)$ is
connected if and only if $m=2$ or $m=4$. On the other hand, the dual
graph of $D_m$ is the cycle $C_m$, which is 2-degenerate, thus $\mathcal{C}(C_m, k)$ is connected whenever $k\geq 4$ (using Theorem \ref{coloring_degeneracy} or by a direct proof, the latter providing a $O(m)$ upper bound on the diameter of $\mathcal{C}(C_m, k)$). We
directly obtain the following (for which we have not been able to found a proof
avoiding planar duality).

 \begin{corollary}
 For any
integers $m\ge 2$ and  $k\ge 4$ and any abelian group $A$ of cardinality at least
$4$, each of the reconfiguration graphs
$\mathcal{F}(D_m,k)$ and $\mathcal{F}(D_m,A)$  is connected.
\end{corollary}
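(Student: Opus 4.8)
The plan is to deduce both statements from the planar duality theorems \ref{thm:planar} and \ref{thm:planarint} together with the degeneracy result of Theorem \ref{coloring_degeneracy}, as foreshadowed in the discussion preceding the corollary. The first step is to record that $D_m$ is $2$-edge-connected for every $m\ge 2$ (it has at least two parallel edges between $u$ and $v$), so that the duality theorems are applicable, and then to identify its dual. Drawing the $m$ parallel edges between $u$ and $v$ as nested arcs, Euler's formula gives $F=m$ faces, each of which is a bigon bounded by a pair of cyclically consecutive edges. Since each edge of $D_m$ separates two consecutive faces, the corresponding dual edge joins the two consecutive dual vertices, and one checks directly that $D_m^*=C_m$.

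The second step is to apply the degeneracy bound. As $C_m$ is $2$-degenerate, Theorem \ref{coloring_degeneracy} yields that $\mathcal{C}(C_m,\ell)$ is connected for every $\ell\ge 4$. I would then feed this into the two duality theorems with $G=D_m$ and $G^*=C_m$: taking $\ell=k\ge 4$ in Theorem \ref{thm:planarint} shows that $\mathcal{F}(D_m,k)$ is connected, while taking $\ell=|A|\ge 4$ in Theorem \ref{thm:planar} shows that $\mathcal{F}(D_m,A)$ is connected for any abelian group $A$ with $|A|\ge 4$. This settles both claims simultaneously.

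I expect no step here to pose a genuine obstacle, since the argument is a direct combination of results established earlier in the paper. The only points requiring any care are the explicit identification $D_m^*=C_m$, and the observation that the degeneracy threshold $d+2=4$ for the $2$-degenerate cycle $C_m$ matches exactly the hypotheses $k\ge 4$ and $|A|\ge 4$ in the statement, so that the connectivity of the recoloring graph $\mathcal{C}(C_m,\ell)$ is available for precisely the range of $\ell$ we need.
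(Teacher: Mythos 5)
Your proposal is correct and follows essentially the same route as the paper: identify $D_m^*$ as the cycle $C_m$, invoke its $2$-degeneracy together with Theorem \ref{coloring_degeneracy} to get connectivity of $\mathcal{C}(C_m,\ell)$ for $\ell\ge 4$, and then conclude via Theorems \ref{thm:planar} and \ref{thm:planarint}. The only cosmetic difference is that the paper also notes a direct $O(m)$ diameter bound for $\mathcal{C}(C_m,\ell)$ as an alternative to the degeneracy theorem, but this is not needed for the connectivity claim.
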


Note that multiple edges are irrelevant in graph coloring, and thus $\mathcal{C}(G^*,k)$ does not depend on the number
of multiple edges between pairs of vertices of $G^*$, so we can
consider the simple graph underlying $G^*$ instead of $G^*$. (Equivalently, this
corresponds to contracting edges in $G$ to make it 3-edge-connected). 
By Theorem \ref{coloring_degeneracy}, we obtain the following direct corollary of Theorems \ref{thm:planar}
and \ref{thm:planarint}.

    \begin{corollary}
Let $G$ be a 2-edge-connected plane graph such that the simple planar
graph underlying the dual graph
$G^*$ is $d$-degenerate, for some integer $d\ge 1$. Then for any
integer $k\ge d+2$ and any abelian group $A$ of cardinality at least
$d+2$, each of the reconfiguration graphs
$\mathcal{F}(G,k)$ and $\mathcal{F}(G,A)$  is connected.
\end{corollary}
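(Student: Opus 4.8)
The plan is to combine the degeneracy result on recoloring (Theorem \ref{coloring_degeneracy}) with the two duality transfer theorems (Theorems \ref{thm:planar} and \ref{thm:planarint}), using the observation that proper colorings are insensitive to edge multiplicities. First I would let $H$ denote the simple planar graph underlying $G^*$, which is $d$-degenerate by hypothesis. The key preliminary remark is that a coloring of the vertices of $G^*$ is proper if and only if it is proper as a coloring of $H$, since parallel edges impose no additional constraint; moreover two colorings differ on a single vertex in $G^*$ if and only if they do so in $H$. Consequently the reconfiguration graphs $\mathcal{C}(G^*,k)$ and $\mathcal{C}(H,k)$ are literally the same graph for every $k$, so any connectivity statement about one transfers verbatim to the other.

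Next I would invoke Theorem \ref{coloring_degeneracy}: since $H$ is $d$-degenerate, for every integer $k\ge d+2$ the graph $\mathcal{C}(H,k)$ is connected, and hence $\mathcal{C}(G^*,k)$ is connected as well. With this in hand, the two conclusions follow by applying the appropriate transfer theorem. For integer flows, fix $k\ge d+2$; then $\mathcal{C}(G^*,k)$ is connected, and Theorem \ref{thm:planarint} (which requires only that $G$ be $2$-edge-connected and plane, both satisfied here) yields that $\mathcal{F}(G,k)$ is connected. For group flows, fix an abelian group $A$ with $|A|\ge d+2$; applying the connectivity of $\mathcal{C}(G^*,|A|)$ together with Theorem \ref{thm:planar} gives that $\mathcal{F}(G,A)$ is connected.

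I do not expect any genuine obstacle, as this is a direct combination of already-established results; the statement is of corollary strength precisely because all the substantive work (the duality between recoloring of $G^*$ and flow reconfiguration of $G$, and the degeneracy bound for recoloring) has been carried out earlier. The only point deserving explicit mention is the reduction from $G^*$ to its underlying simple graph $H$, so that the degeneracy hypothesis is stated for $H$ rather than for the multigraph $G^*$; this is harmless since, as noted above, multiple edges are irrelevant to proper colorings and therefore to $\mathcal{C}(\cdot,k)$. One could additionally record the diameter bounds implied by Theorems \ref{thm:planar} and \ref{thm:planarint} (for group flows the diameter of $\mathcal{F}(G,A)$ is at most that of $\mathcal{C}(H,|A|)$, and for integer flows at most $2|E(G)|$ plus the diameter of $\mathcal{C}(H,k)$), but for the stated corollary plain connectivity suffices.
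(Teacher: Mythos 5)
Your proposal is correct and follows exactly the paper's route: the paper likewise observes that multiple edges are irrelevant to proper colorings, so $\mathcal{C}(G^*,k)$ coincides with $\mathcal{C}(H,k)$ for the underlying simple graph $H$, then applies Theorem \ref{coloring_degeneracy} and transfers connectivity via Theorems \ref{thm:planar} and \ref{thm:planarint}. Nothing is missing.
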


Simple planar graphs are 5-degenerate, and more generally it follows from
Euler's formula that for any $g\ge 3$, simple planar graphs of  girth (length
of a shortest cycle) at
least $g$ have average degree less than $2g/(g-2)$. In particular,
simple planar graphs of girth at least 4 are 3-degenerate and simple planar graphs
of girth at least 6 are 2-degenerate. Using the fact that the edge-connectivity of a planar graph is the same as the girth of its dual, this directly
implies the following.

   \begin{corollary}
     Let  $G$ be a 2-edge-connected planar graph. For any
integer $k\ge 7$ and any abelian group $A$ of cardinality at least
$7$, each of the reconfiguration graphs
$\mathcal{F}(G,k)$ and $\mathcal{F}(G,A)$  is connected.
\end{corollary}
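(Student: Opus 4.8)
The plan is to deduce this final corollary directly from the preceding corollary, which handles the case where the simple planar graph underlying $G^*$ is $d$-degenerate and gives connectivity of $\mathcal{F}(G,k)$ and $\mathcal{F}(G,A)$ for $k\ge d+2$ and $|A|\ge d+2$. So the entire task reduces to establishing that for a $2$-edge-connected planar graph $G$, the simple graph underlying $G^*$ is $5$-degenerate, since then $d+2=7$ gives exactly the claimed thresholds $k\ge 7$ and $|A|\ge 7$.

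First I would recall the two structural facts the excerpt has already assembled. The key bound, stated just before the corollary, is that every simple planar graph is $5$-degenerate (equivalently, by Euler's formula, every simple planar graph has a vertex of degree at most $5$, and this passes to subgraphs since subgraphs of planar graphs are planar). The second ingredient is that the simple graph underlying the dual $G^*$ is itself planar—indeed $G^*$ is a plane graph by definition, and deleting multiple edges preserves planarity. Combining these, the simple graph underlying $G^*$ is a simple planar graph, hence $5$-degenerate.

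Next I would invoke the preceding corollary with $d=5$. I must only check its hypothesis is met, namely that $G$ is $2$-edge-connected (given) and that the simple planar graph underlying $G^*$ is $d$-degenerate for $d=5$, which is exactly what the previous paragraph established. The corollary then yields that $\mathcal{F}(G,k)$ and $\mathcal{F}(G,A)$ are connected for every integer $k\ge 7$ and every abelian group $A$ with $|A|\ge 7$, which is precisely the statement. (Note the phrase in the excerpt relating edge-connectivity of a planar graph to the girth of its dual is used to sharpen the girth-based variants; for the plain $k\ge 7$ bound only the unconditional $5$-degeneracy of simple planar graphs is needed.)

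The main obstacle is essentially bookkeeping rather than mathematical depth: one must be careful that degeneracy is a property of the \emph{simple} underlying graph and that the reduction from $G^*$ to its simple underlying graph is legitimate for coloring purposes. This is justified by the remark immediately preceding the corollary—multiple edges are irrelevant in proper coloring, so $\mathcal{C}(G^*,k)$ coincides with $\mathcal{C}(H,k)$ where $H$ is the simple graph underlying $G^*$—and by Theorem~\ref{coloring_degeneracy} applied to $H$. Thus the only genuine content is the classical $5$-degeneracy of simple planar graphs, and the proof amounts to specializing the previous corollary to $d=5$.
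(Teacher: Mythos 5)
Your proof is correct and follows exactly the paper's route: specialize the preceding $d$-degeneracy corollary to $d=5$ using the classical fact that simple planar graphs are $5$-degenerate, noting that multiple edges in $G^*$ are irrelevant for coloring. You also correctly observe that the girth--edge-connectivity duality is only needed for the sharper $4$- and $5$-edge-connected variants, not for this statement.
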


  \begin{corollary}
     Let  $G$ be a 4-edge-connected planar graph. For any
integer $k\ge 5$ and any abelian group $A$ of cardinality at least
$5$, each of the reconfiguration graphs
$\mathcal{F}(G,k)$ and $\mathcal{F}(G,A)$  is connected.
\end{corollary}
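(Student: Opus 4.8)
The plan is to specialize the planar-duality machinery exactly as in the preceding (girth-$6$) corollary, but using the stronger degeneracy bound that becomes available when the dual has girth at least $4$. First I would fix a plane embedding of $G$; since a $4$-edge-connected planar graph is in particular $2$-edge-connected, Theorems \ref{thm:planar} and \ref{thm:planarint} apply, and it suffices to prove that $\mathcal{C}(G^*, k)$ is connected for every integer $k \ge 5$ (this single statement then covers both the integer case and the group case, taking $k = |A|$).

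The crux is to control the structure of the dual. Using the fact that the edge-connectivity of a planar graph equals the girth of its dual, the hypothesis that $G$ is $4$-edge-connected gives that $G^*$ has girth at least $4$. In particular $G^*$ has no cycle of length $2$, i.e.\ no pair of parallel edges, and no triangle, so $G^*$ is already a simple planar graph of girth at least $4$. By Euler's formula such a graph has average degree less than $2 \cdot 4/(4-2) = 4$; since every subgraph is again planar of girth at least $4$, each subgraph contains a vertex of degree at most $3$, and hence $G^*$ is $3$-degenerate.

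I would then conclude by invoking Theorem \ref{coloring_degeneracy} with $d = 3$: for every $k \ge d+2 = 5$, the graph $\mathcal{C}(G^*, k)$ of proper $k$-colorings is connected. Plugging this into Theorem \ref{thm:planarint} shows that $\mathcal{F}(G, k)$ is connected for all $k \ge 5$, and plugging the case $k = |A|$ into Theorem \ref{thm:planar} shows that $\mathcal{F}(G, A)$ is connected for every abelian group $A$ with $|A| \ge 5$.

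I do not expect a genuine obstacle here, as the result is a direct specialization of the general degeneracy-based corollary; the only point deserving a little care is the passage from $4$-edge-connectivity of $G$ to girth at least $4$ of $G^*$, together with the observation that girth at least $4$ automatically forces $G^*$ to be simple. This last point is what lets us apply the degeneracy bound to $G^*$ verbatim, with no separate reduction to an underlying simple graph.
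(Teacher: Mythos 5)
Your proposal is correct and follows essentially the same route as the paper: the $4$-edge-connectivity of $G$ translates into girth at least $4$ for the (automatically simple) dual $G^*$, Euler's formula gives $3$-degeneracy, and Theorem \ref{coloring_degeneracy} combined with Theorems \ref{thm:planar} and \ref{thm:planarint} finishes the argument. Nothing is missing.
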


  \begin{corollary}[using \protect{\cite[Theorem 1.3]{Girth_five}}]\label{cor:g5}
     Let  $G$ be a 5-edge-connected planar graph. For any
integer $k\ge 4$ and any abelian group $A$ of cardinality at least
$4$, each of the reconfiguration graphs
$\mathcal{F}(G,k)$ and $\mathcal{F}(G,A)$  is connected.
\end{corollary}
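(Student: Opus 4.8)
The plan is to reduce both statements, via the planar duality of Theorems \ref{thm:planar} and \ref{thm:planarint}, to the connectivity of the recoloring graph $\mathcal{C}(G^*,k)$ for $k\ge 4$. Since $G$ is $5$-edge-connected and planar, the girth of $G^*$ equals the edge-connectivity of $G$, so $G^*$ has girth at least $5$; because multiple edges do not affect colorings, I would replace $G^*$ by the simple planar graph $H$ underlying it, which still has girth at least $5$. It then suffices to show that $\mathcal{C}(H,k)$ is connected for every $k\ge 4$, as Theorems \ref{thm:planar} and \ref{thm:planarint} transfer this to the connectivity of $\mathcal{F}(G,A)$ (for $|A|\ge 4$) and of $\mathcal{F}(G,k)$.

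For $k\ge 5$ the conclusion is immediate from the $4$-edge-connected corollary above (a $5$-edge-connected graph being in particular $4$-edge-connected), or directly: by Euler's formula a simple planar graph of girth at least $5$ has average degree less than $10/3<4$ and is therefore $3$-degenerate, so Theorem \ref{coloring_degeneracy} gives that $\mathcal{C}(H,k)$ is connected for all $k\ge 3+2=5$. The same argument covers every abelian group $A$ with $|A|\ge 5$.

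The remaining and genuinely harder case is $k=4$ (equivalently $|A|=4$), which lies just outside the reach of the degeneracy bound. Here I would invoke \cite[Theorem 1.3]{Girth_five}, which I expect to assert precisely that $\mathcal{C}(H,4)$ is connected for every simple planar graph $H$ of girth at least $5$. Feeding this into Theorems \ref{thm:planar} and \ref{thm:planarint} yields the connectivity of $\mathcal{F}(G,4)$ and of $\mathcal{F}(G,A)$ for $|A|=4$, and together with the previous paragraph this settles every case.

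The main obstacle is thus concentrated entirely in the $4$-coloring case: unlike the generic $k\ge d+2$ regime handled by degeneracy, connectivity of $\mathcal{C}(H,4)$ for girth-$5$ planar graphs is a substantial result, and it is exactly what \cite[Theorem 1.3]{Girth_five} supplies, the rest being a routine application of planar duality. Should that theorem come with a polynomial diameter bound, Theorems \ref{thm:planar} and \ref{thm:planarint} would moreover transfer it---up to the additive $2|E(G)|$ term in the integer-flow case---to a diameter bound on $\mathcal{F}(G,4)$ and $\mathcal{F}(G,A)$.
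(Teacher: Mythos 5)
Your proposal is correct and follows essentially the same route as the paper: planar duality (Theorems \ref{thm:planar} and \ref{thm:planarint}) reduces everything to the connectivity of $\mathcal{C}(G^*,k)$, the girth-$5$ dual handles $k\ge 5$ via $3$-degeneracy, and the case $k=4$ is exactly what the cited theorem of \cite{Girth_five} supplies. Your closing remark about the diameter also matches the paper's observation that this corollary is the one result of the section for which no polynomial diameter bound is currently available.
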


Note that the simple graph underlying the dual graph of any
outerplanar graph is 2-degenerate. We thus obtain the following.

  \begin{corollary}
     Let  $G$ be an outerplanar graph. For any
integer $k\ge 4$ and any abelian group $A$ of cardinality at least
$4$, each of the reconfiguration graphs
$\mathcal{F}(G,k)$ and $\mathcal{F}(G,A)$  is connected.
\end{corollary}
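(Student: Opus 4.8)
The plan is to reduce the statement to the preceding corollary by verifying its single hypothesis, namely that the simple graph underlying the dual $G^*$ is $2$-degenerate; applying that corollary with $d=2$ then yields exactly the thresholds $k\ge 4=d+2$ and $|A|\ge 4=d+2$ claimed here. So everything hinges on the degeneracy of the simple dual of an outerplanar graph.

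First I would dispose of the degenerate case. If $G$ is outerplanar but not $2$-edge-connected, then $G$ has a bridge, hence no nowhere-zero $A$-flow and no nowhere-zero $k$-flow; thus $\mathcal{F}(G,A)$ and $\mathcal{F}(G,k)$ are empty, and empty graphs are connected by convention. So I may assume $G$ is $2$-edge-connected and fix an outerplane embedding, in which every vertex lies on the outer face; in particular $G^*$ has no loops, so passing to its underlying simple graph is harmless.

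The core step is the $2$-degeneracy of this simple dual. Here I would invoke the standard structural fact that the \emph{weak dual} of an outerplanar graph — the dual with the outer-face vertex $f_\infty$ removed, i.e.\ the graph on the inner faces in which two faces are adjacent when they share an edge — is a forest. To verify $2$-degeneracy I take an arbitrary subgraph $H$ of the simple dual and exhibit a vertex of degree at most $2$. If $H$ contains no inner-face vertex, then $H$ is just $f_\infty$, of degree $0$. Otherwise the inner-face vertices of $H$ span a subforest of the weak dual, so one of them, say $v$, has at most one inner-face neighbour in $H$; and since in the \emph{simple} dual the (possibly many) edges joining $v$ to the outer face collapse to a single edge, $v$ has at most one further neighbour, namely $f_\infty$. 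Hence $\deg_H(v)\le 2$, and every subgraph has a vertex of degree at most $2$, as required.

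The main obstacle is precisely this degeneracy verification, and within it the two points to get right are: the weak-dual-is-a-forest fact (which lets me always locate a leaf inner face, with at most one inner-face neighbour, even though $f_\infty$ itself may have arbitrarily large degree), and the observation that collapsing parallel face/outer-face edges caps the extra degree at exactly one. I would stress that no special position is needed for $f_\infty$: a high-degree outer-face vertex never obstructs degeneracy, since the low-degree witness is always chosen among the inner faces. With the $2$-degeneracy established, the result follows immediately from the preceding corollary with $d=2$, simultaneously giving the connectivity of $\mathcal{F}(G,k)$ for all $k\ge 4$ and of $\mathcal{F}(G,A)$ for all abelian groups $A$ with $|A|\ge 4$.
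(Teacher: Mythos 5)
Your proposal is correct and follows essentially the same route as the paper: the paper derives this corollary directly from the preceding one by noting that the simple graph underlying the dual of an outerplanar graph is $2$-degenerate, which is exactly your reduction (you additionally spell out the weak-dual-is-a-forest justification of that degeneracy claim and the harmless bridge case, both of which the paper leaves implicit).
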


Note that the proof of Theorem \ref{coloring_degeneracy} does not
provide  good upper bounds on the diameter of the reconfiguration
graphs in general. However, such bounds typically exist in the planar case (see for instance \cite[Table 1]{Girth_five} and
the reference therein), and thus Theorems \ref{thm:planar}
and \ref{thm:planarint} automatically provide good upper bounds on the
diameter of $\mathcal{F}(G,A)$ for the dual cases. This gives polynomial bounds on the diameter of the flow reconfiguration graph in all the results in this section, with the notable exception of Corollary \ref{cor:g5}.

\section{Group flow reconfiguration in general graphs}\label{sec:ab}

This section is devoted to the reconfiguration of group flows in general graphs. In Section \ref{sec:groupflowprel}, we introduce a useful tool that will allow us to extract short subdivisions of highly connected subgraphs in any sufficiently dense graph. This will be used repeatedly in the proofs of our results. In Section \ref{sec:choos}, we present a number of old and new results on flow choosability, which will be crucial in the remainder of this section. In Section \ref{sec:smallproduct} we prove that $\mathcal{F}(G,\mathbb{Z}_2^{8})$ is connected for every 2-edge-connected graph $G$.  In Section \ref{sec:cyclic} we prove that if $A$ is any sufficiently large cyclic group, $\mathcal{F}(G,A)$ is connected for every 2-edge-connected graph $G$. Finally, in Section \ref{sec:largeabelian}, we combine these results to show that for any sufficiently large abelian group $A$, $\mathcal{F}(G,A)$ is connected for  every 2-edge-connected graph $G$.

\subsection{Preliminaries}\label{sec:groupflowprel}

Given a (multi)graph $G$, a \emph{$(\le k)$-subdivision} of $G$ is a graph
obtained from $G$ by replacing every edge $uv$ by a path on at most $k+2$
vertices between $u$ and $v$.

\smallskip

We will need the following simple lemma.

\begin{lemma}\label{lem:kevin}
Let $G$ be a graph with $\ve(G)\ge 3$ and $\e(G)> \frac{4}{3}\, \ve(G)-2$. Then $G$ contains a
subgraph which is a cycle of length at most 5 or a $(\le 2)$-subdivision of a
3-edge-connected graph.
\end{lemma}

\begin{proof}
  We prove the result by induction on $\ve(G)\ge 3$. Note that since
  $\ve(G)\ge 3$, we have $\e(G)> \frac{4}{3}\, \ve(G)-2\ge \ve(G)-1$, and thus
  $G$ contains a cycle. Hence, we can assume that $\ve(G)\ge 6$ and $G$ has no multiple edges
  (otherwise $G$ clearly contains a cycle of length at most $5$). Since for every $k\ge 6$, $\e(C_k)=k \leq \frac{4}{3}\,k-2=\frac{4}{3}\,\ve(C_k)-2$, it
  follows that $G$ itself is not a cycle.

  \smallskip

  Note that $G$ has no vertex $v$ of degree at most 1, since
  otherwise \[\e(G-v)=e(G)-1> \tfrac{4}{3}\, \ve(G)-3> \tfrac{4}{3}\, \ve(G-v)-2,\]
  and since $\ve(G')\ge 6-1=5$ we can apply induction on $G'$. So we
  can assume that $G$ has minimum degree 2.

  \smallskip

  This shows that all components have size at least
  3. By an averaging argument, at least one component $C$ must
  satisfy $\e(C)> \tfrac43\, \ve(C)-2$, and we can then apply induction
  to $C$ if $C\neq G$. Hence, we can assume that $G$ is connected.

  \smallskip

%Assume that $G$  is not $3$-edge-connected. Then there is a
%partition  of the vertex set $V$ of $G$ into two non-empty sets
%$V_1,V_2$ such that there are
%at most two edges with one endpoint in $V_1$ and the other in $V_2$ in
%$G$. Choose such a partition maximising  $\min(|V_1|, |V_2|)$ and, without loss of generality, assume that $|V_1|=\min(|V_1|, |V_2|)$. Let $G_1=G[V_1]$ and $G_2=G[V_2]$. If $\e(G_1)\le \tfrac{4}{3}\,\ve(G_1)-2$
%and $\e(G_2)\le \tfrac{4}{3}\,\ve(G_2)-2$ we have \[\e(G)\le \e(G_1)+\e(G_2)+2\le \tfrac{4}{3}
%  \,\ve(G)-4+2=\tfrac{4}{3} \,\ve(G)-2,\] a contradiction. So there exists
%$i\in \{1,2\}$ such that  $\e(G_i)> \tfrac{4}{3}\,
%\ve(G_i)-2$. If $\ve(G_i)\ge 3$ we can apply
%induction to $G_i$ and obtain a subgraph which is a cycle of length at most $5$ or a $(\le 2)$-subdivision of a
%3-edge-connected graph. So we can assume that $\ve(G_1)\le \ve(G_i)\le
%2$. Note that in this case $G_1$ consists of a single vertex or a
%single edge. In particular the edge-cut separating $V_1$ and $V_2$
%contains precisely two edges, since otherwise $V_1$ would contain a
%vertex of degree $1$ in $G$, which is a contradiction. So $G$ is $2$-edge-connected.

Assume that $G$  is not $3$-edge-connected. Consider any
partition  of the vertex set $V$ of $G$ into two non-empty sets
$V_1,V_2$ such that there are
at most two edges with one endpoint in $V_1$ and the other in $V_2$ in
$G$.  Let $G_1=G[V_1]$ and $G_2=G[V_2]$, and assume by symmetry that $\ve(G_1)\le \ve(G_2)$. If $\e(G_1)\le \tfrac{4}{3}\,\ve(G_1)-2$
and $\e(G_2)\le \tfrac{4}{3}\,\ve(G_2)-2$ we have \[\e(G)\le \e(G_1)+\e(G_2)+2\le \tfrac{4}{3}
  \,\ve(G)-4+2=\tfrac{4}{3} \,\ve(G)-2,\] a contradiction. So there exists
$i\in \{1,2\}$ such that  $\e(G_i)> \tfrac{4}{3}\,
\ve(G_i)-2$. If $\ve(G_i)\ge 3$ we can apply
induction to $G_i$ and obtain a subgraph which is a cycle of length at most $5$ or a $(\le 2)$-subdivision of a
3-edge-connected graph. So we can assume that $\ve(G_1)\le \ve(G_i)\le
2$. Note that in this case $G_1$ consists of a single vertex or a
single edge. In particular the edge-cut separating $V_1$ and $V_2$
contains precisely two edges, since otherwise $V_1$ would contain a
vertex of degree $1$ in $G$, which is a contradiction. So $G$ is $2$-edge-connected.

A \emph{thread} in $G$ is a path where all internal vertices have
degree $2$ in $G$ and the endpoints have degree more than $2$. By considering the 2-edge-cut consisting of the first and last edge of a thread, the paragraph above shows that the thread must contain of at most four vertices. This implies that
$G$ does not contain any thread of at least five vertices. Replace
every thread in $G$ by a single edge (possibly producing multiple edges). Recall that $G$ has minimum degree at least $2$ and is not
a cycle, so the operation above is well defined and indeed produces
some graph $H$, and $G$ is a $(\le 2)$-subdivision of $H$ by
definition. As every $2$-edge-cut in $H$ can be extended to a $2$-edge-cut
of $G$ where both sides of the cut have size at least $3$, the graph  $H$
must be  $3$-edge-connected, as desired.
\end{proof}

\subsection{Flow choosability}\label{sec:choos}

We will need the following results on the existence of flows
avoiding certain flow values.

\smallskip

Given a abelian group $A$ and an integer $k$, we say that a graph $G$ with underlying orientation $D$ is \emph{$(A,k)$-flow-choosable} if for any list assignment $F:E(G)\to 2^A$, where $|F(e)|\le k$ for every $e\in E(G)$ (seen as  a set of \emph{forbidden} values),  there is an $A$-flow $f$ of $G$ with respect to $D$
  such that for every edge $e$ in $G$, $f(e)\not\in F(e)$. As before, we often omit the explicit mention of $D$ and consider each graph $G$ as having an underlying (fixed) orientation, and all flows of $G$ are defined with respect to this orientation. When we really want to stress that this orientation is relevant for the problem we consider, we simply say that $G$ is an oriented graph.

\begin{theorem}[\cite{DeV00}]\label{thm:matt}
For every integer $k\ge 4$, every 3-edge-connected oriented graph $G$  is $(\mathbb{Z}_2^{k},3)$-flow-choosable.
\end{theorem}

We will also need the following more general version (with slightly weaker bounds).
  
  \begin{theorem}\label{thm:gco}
   Let $A$ and $B$ be two abelian groups with $|A|\ge 7$ and $|B|\ge 4$. Then for every 3-edge-connected oriented graph $G$ and every  vertex $v$ of
degree 3 of G, $G - v$ is $(A\times B,3)$-flow-choosable. In particular, every 3-edge-connected graph is $(A\times B,3)$-flow-choosable. 
  \end{theorem}

We deduce Theorem \ref{thm:gco} from results from \cite{groupConnectivity} as we explain now.

\medskip

We say that a graph $G$ is the \emph{2-closure} of a spanning subgraph $H$ of $G$ if there is a sequence of cycles $C_1,\ldots,C_k$ in $G$ such that $E(G)=E(H)\cup E(C_1) \cup \cdots \cup E(C_k)$ and for every $i\ge 1$, $C_i$ contains at most 2 edges that are not in  $E(H)\cup E(C_1) \cup \cdots\cup E(C_{i-1})$.

\begin{lemma}[Lemma 2.1 in \cite{groupConnectivity}]\label{lem:21}
Let $G$ be an oriented graph and $T$ be a spanning tree of $G$ such that $G$ is the 2-closure of the spanning subgraph $R$ of $G$ with edge-set $E(G)-E(T)$. Let $A$ be an abelian group, and assume that every edge
  $e\in E(T)$ has a list $F(e)$ of less than $|A|/2$ values in
  $A$. Then there is an $A$-flow $f$ of $G$
  such that for every edge $e\in E(T)$, $f(e)\not\in F(e)$.
\end{lemma}

The proof of Lemma 3.1 in \cite{groupConnectivity} yields verbatim the following result.

\begin{lemma}\label{lem:31}
Theorem \ref{thm:gco} is equivalent to its restriction to cubic graphs.
\end{lemma}

\begin{proof}
 Observe that every connected graph is $(A\times B,3)$-flow-choosable if and only if all its 2-connected components are $(A\times B,3)$-flow-choosable. As a vertex of degree 3 cannot be a cut-vertex in a 3-edge-connected graph, we can thus assume that the graph $G$ in the statement of Theorem \ref{thm:gco} is 2-connected. Replace every vertex $u$ of degree at least 4 by a cycle of length $d_G(u)$ (connecting each edge incident to $u$ in $G$ to a different vertex of the cycle). The resulting graph $H$ is cubic and 3-edge-connected, and any vertex $v$ of degree 3 in $G$ is still a vertex of degree 3 in $H$. As $G-v$ can be obtained from $H-v$ by edge-contractions, if $H-v$ is $(A\times B,3)$-flow-choosable, then $G-v$ is also $(A\times B,3)$-flow-choosable. It follows that Theorem \ref{thm:gco} is equivalent to its reduction to cubic graphs.
\end{proof}

A graph is said to be a \emph{truncated cubic 3-edge-connected graph} if it is obtained from a cubic 3-edge-connected graph with at least 4 vertices by
the deletion of a single vertex.

\begin{lemma}[Lemma 3.3 in \cite{groupConnectivity}]\label{lem:33}
Every truncated cubic 3-edge-connected graph $G$ has a spanning tree $T$  such that $G$ is the 2-closure of the spanning subgraph $R$ of $G$ with edge-set $E(G)-E(T)$. 
\end{lemma}

We are now ready to prove Theorem \ref{thm:gco}, which is inspired by the proof of Theorem 3.2 in \cite{groupConnectivity}. 

\medskip

\noindent \emph{Proof of Theorem \ref{thm:gco}.}
By Lemma \ref{lem:31}, it is enough to prove that every truncated cubic 3-edge-connected graph $G$ is $(A\times B,3)$-flow-choosable. Fix such a graph $G$, and some lists $F(e)$ of forbidden flow values on the edges of $G$, with $|F(e)|\le 3$ for every edge $e\in E(G)$. By Lemma \ref{lem:33}, $G$ has a spanning tree $T$ such that $G$ is the 2-closure of the spanning subgraph $R$ of $G$ with edge-set $E(G)-E(T)$. For every edge $e\in E(G)$, define $F'(e)=\{z+(0_A,x),|\,z\in F(e), x\in B\}$. Note that since $|F(e)|\le 3$ and $|B|\ge 4$, we have $|F'(e)|\le 3|B|<|A|\cdot |B|/2=|A\times B|/2$ (where the second inequality follows from $|A|\ge 7)$.

We can thus apply Lemma \ref{lem:21}, and obtain  an $(A\times B)$-flow $f$ in $G$ such that for every $e\in E(T)$ and any $z\in F(e)$, $f(e)-z$ is not in the subgroup $\{0_A\}\times B$. For any edge $e\not\in E(T)$, choose some value $x_e$ in the subgroup $\{0_A\}\times B$, such that $f(e)+x_e\not\in F(e)$ (this is always possible, as $|F(e)|\le 3$ and there are $|B|\ge 4$ choices for $x_e$). Starting from $f$, add for each edge  $e\in E(G)-E(T)$ some flow of value $x_e$ along the unique cycle consisting of $e$ and edges of $T$. By the definition of $x_e$, the resulting flow (call it $g$), satisfies $g(e)=f(e)+x_e\not\in F(e)$ for each edge $e\in E(G)-E(T)$. Moreover, for every edge $e\in T$, all flow values of $g-f$ are in $\{0_A\}\times B$. As for every $e\in E(T)$ and any $z\in F(e)$, $f(e)-z$ is not in the subgroup $\{0_A\}\times B$, it follows that $g(e)-z$ is not in the subgroup $\{0_A\}\times B$, and thus $g(e)\ne z$ for any $z \in F(e)$, as desired. \hfill $\Box$

\medskip

We obtain the following immediate corollary of Theorem \ref{thm:matt}.

\begin{corollary}\label{cor:raphael}
  Let $\ell\ge 4$. Let $G$ be an orientation of a cycle of length at most 5 or of a $(\le
  2)$-subdivision of a 3-edge-connected graph and let $\phi: E(G)\to  \mathbb{Z}_2^{\ell}$ be some function. Then there is a $\mathbb{Z}_2^{\ell}$-flow $f$ of $G$
  such that for every edge $e$ in $G$, $f(e)\neq \phi(e)$.
\end{corollary}

%\begin{corollary}\label{cor:raphael}
%  Fix some integer $i\ge 1$. Let $G$ be an orientation of a cycle of length at most 5 or a $(\le
%  2)$-subdivision of a 3-edge-connected graph and assume that every edge
%  $e$ has a list $L(e)$ of at most $i$ values from
%  $\mathbb{Z}_2^{i+3}$. Then there is a $\mathbb{Z}_2^{i+3}$-flow $f$ of $G$
%  such that for every edge $e$ in $G$, $f(e)\not\in L(e)$.
%\end{corollary}

\begin{proof}
 Assume first that $G$ is a cycle of length at most 5. Then each choice
of a flow value along some edge $e$ forces all the other flow values. As
there are at most 5 edges and at most 1 forbidden value per edge,
there are at most $5$ forbidden values for $e$. Since
$|\mathbb{Z}_2^{\ell}|=2^{\ell}>5$ we can always find a
$\mathbb{Z}_2^{\ell}$-flow where every edge $e$ is assigned a
flow value distinct from $\phi(e)$.

Assume now that $G$ is a $(\le
  2)$-subdivision of a 3-edge-connected graph $H$. As every
  subdivided edge of $G$ corresponds to a path  containing at most 3
  edges in $H$, a flow of $G$ avoiding at most 1 values per edge
  correspond to flows in $H$ avoiding at most $3$ values per edge. By 
  Theorem \ref{thm:matt}, it follows that $G$ has a $\mathbb{Z}_2^{\ell}$-flow where every edge $e$ is assigned a
flow value distinct from $\phi(e)$.
\end{proof}

%\begin{proof}
%Assume first that $G$ is a cycle of length at most 5. Then each choice
%of a flow value along some edge forces all the other flow values. As
%there are at most 5 edges and at most $i$ forbidden values per edges,
%there are at most $5i$ forbidden values. Since
%$|\mathbb{Z}_2^{i+3}|=2^{i+3}>5i$ we can always find a
%$\mathbb{Z}_2^{i+3}$-flow where every edge is assigned an admissible
%flow value.
%
%Assume now that $G$ is a $(\le
%  2)$-subdivision of a 3-edge-connected graph $H$. As every
%  subdivided edge of $G$ corresponds to a path  containing at most 3
%  edges in $H$, a flow of $G$ avoiding at most $i$ values per edge
%  correspond to flows in $H$ avoiding at most $3i$ values per edge. By 
%  Theorem \ref{thm:matt}, since $3i\le 2^{i+3-2}-1$, it follows that $G$ has a $\mathbb{Z}_2^{i+3}$-flow where every edge is assigned an admissible
%flow value.
%\end{proof}

The same proof, using Theorem \ref{thm:gco} instead of Theorem \ref{thm:matt}, gives the following analogous result.

\begin{corollary}\label{cor:gcoAB}
  Let $A$ and $B$ be two abelian groups with $|A|\ge 7$ and $|B|\ge 4$. Let $G$ be an orientation of a cycle of length at most 5 or of a $(\le
  2)$-subdivision of a 3-edge-connected graph and let $\phi: E(G)\to  A\times B$ be some function. Then there is a $(A\times B)$-flow $f$ of $G$
  such that for every edge $e$ in $G$, $f(e)\neq \phi(e)$.
\end{corollary}

%\begin{proof}
% Assume first that $G$ is a cycle of length at most 5. Then each choice
%of a flow value along some edge $e$ forces all the other flow values. As
%there are at most 5 edges and at most 1 forbidden value per edge,
%there are at most $5$ forbidden values for $e$. Since
%$|A\times B|= 28>5$ we can always find an
%$(A\times B)$-flow where every edge $e$ is assigned a
%flow value distinct from $\phi(e)$.
%
%Assume now that $G$ is a $(\le
%  2)$-subdivision of a 3-edge-connected graph $H$. As every
%  subdivided edge of $G$ corresponds to a path  containing at most 3
%  edges in $H$, a flow of $G$ avoiding at most 1 values per edge
%  correspond to flows in $H$ avoiding at most $3$ values per edge. By 
%  Theorem \ref{thm:gco}, it follows that $G$ has a $(A\times B)$-flow where every edge $e$ is assigned a
%flow value distinct from $\phi(e)$.
%\end{proof}

Since 4 and 7 are coprime, $\mathbb{Z}_{28}$ is isomorphic to $\mathbb{Z}_{7}\times \mathbb{Z}_4$. We obtain the following immediate corollary.

 \begin{corollary}\label{cor:gco}
Let $G$ be an orientation of a cycle of length at most 5 or of a $(\le
  2)$-subdivision of a 3-edge-connected graph and let $\phi: E(G)\to  \mathbb{Z}_{28}$ be some function. Then there is a $\mathbb{Z}_{28}$-flow $f$ of $G$
  such that for every edge $e$ in $G$, $f(e)\neq \phi(e)$.
 \end{corollary}

By Theorem \ref{thm:tutteintmod}, we deduce the following.

\begin{corollary}\label{cor:28flow}
   Let $G$ be an orientation of a cycle of length at most 5 or of a $(\le
  2)$-subdivision of a 3-edge-connected graph and let $\phi: E(G)\to  [-27,27]$ be some function. Then there is a $28$-flow $f$ of $G$
  such that for every edge $e$ in $G$, $f(e)\neq \phi(e)$.
\end{corollary}

% \begin{lemma}%\label{lem:cyclicgroups1}
% Let $p>280^8\cdot 139+140$ be a prime power and let $G$ be an oriented $3$-edge-connected graph. 
% For any nowhere-zero $\mathbb{Z}_p$-flow $f$ in $G$, there exists a nowhere-zero $\mathbb{Z}_p$-flow $g$ in $G$ such that $f$ and $g$ are in the same component of $\mathcal{F}(G,\mathbb{Z}_p)$ and $g(E(G))$ is disjoint from $F:=\{-139,138,\dots,138,139\}\subseteq \mathbb{Z}_p$.
% \end{lemma}

\subsection{Direct product of small groups}\label{sec:smallproduct}

In the remainder of this section it is convenient to view $\mathbb{Z}_2^{k}$ as a vector space over $\mathrm{GF}(2)$, and its subgroups as linear subspaces.
We recall that when some vector space $V$ is the direct sum $V=A \oplus B$ of two linear subspaces $A$ and $B$, any element $x\in V$ can be written uniquely as $x=x_A+x_B$, with $x_A\in A$ and $x_B\in B$. In this case, we say that $A$ and $B$ are \emph{complementary} and we call $x_A$ (resp.\ $x_B$) the \emph{projection} of $x$ to $A$ (resp.\ $B$).
\smallskip

The following simple observation will be used repeatedly.

\begin{observation}\label{obs:sum}
Let $A$ and $B$ be  complementary linear subspaces of $\mathbb{Z}_2^{k}$. Let $G$ be a 2-edge-connected graph, and let $f_A$ be a nowhere-zero $A$-flow in  $G$. Then for any $B$-flows $g_{B}$ and $h_{B}$ in $G$, $f_A+g_{B}$ is in the same connected component as $f_A+h_{B}$ in $\mathcal{F}(G,\mathbb{Z}_2^{k})$.
\end{observation}

\begin{proof}
Using Lemma \ref{lem:linear}, decompose the flow $h_{B}-g_{B}$ as a sum of $B$-flows whose supports are cycles of $G$. Starting with $f_A+g_{B}$, add each of these flows one after the other. The resulting flow is $f_A+h_{B}$, and since $f_A$ is non-zero, all intermediate flows are nowhere-zero $\mathbb{Z}_2^{k}$-flows.
\end{proof}

We will later also use the following variant of Observation \ref{obs:sum}. 

\begin{observation}\label{obs:sum2}
Let $A$ and $B$ be  abelian groups. Let $G$ be a 2-edge-connected graph, and let $f_A$ be a nowhere-zero $A$-flow in  $G$. Then for any $B$-flows $g_{B}$ and $h_{B}$ in $G$, $(f_A,g_{B})$ is in the same connected component as $(f_A,h_{B})$ in $\mathcal{F}(G,A\times B)$.
\end{observation}

Let us say that a nowhere-zero $\mathbb{Z}_2^{k}$-flow $f$ in a graph $G$ is \emph{composite} if there exist two complementary linear subspaces $A$ and $B$ of $\mathbb{Z}_2^{k}$ such that the projections of $f$ to $A$ and $B$ are both nowhere-zero, by which we mean that every flow value $f(e)$ can be written uniquely as $f(e)=f_A(e)+f_{B}(e)$, where $f_A(e)\in A\setminus \{0\}$ and $f_{B}(e)\in B\setminus \{0\}$.

\begin{lemma}\label{lem:composite}
For $k\ge 8$, all composite nowhere-zero $\mathbb{Z}_2^{k}$-flows of a 2-edge-connected graph $G$ are in the same component of $\mathcal{F}(G,\mathbb{Z}_2^{k})$.
\end{lemma}

\begin{proof}
Consider two composite nowhere-zero $\mathbb{Z}_2^{k}$-flows $f$ and $g$ of $G$. 
Let $A_1$ and $A_2$ be complementary linear subspaces of $\mathbb{Z}_2^k$ such that  the projections of $f$ to both $A_1$ and $A_2$ are nowhere-zero.
Likewise, let $B_1$ and $B_2$ be complementary linear subspaces of $\mathbb{Z}_2^k$ such that  the projections of $g$ to both $B_1$ and $B_2$ are nowhere-zero.
Without loss of generality, both $A_1$ and $B_1$ have dimension at most $4$. Let $A_2'$ be a 3-dimensional linear subset of $A_2$ and $B_2'$ be a 3-dimensional linear subset of $B_2$.

Let $f'$ be a nowhere-zero $A_2'$-flow of $G$, and let $g'$ be a nowhere-zero $B_2'$-flow of $G$ (these flows exist, as $A_2'$ and $B_2'$ have dimension exactly 3 and every 2-edge-connected graph has a nowhere-zero $\mathbb{Z}_2^3$-flow).
By Observation \ref{obs:sum}, $f$ is in the same connected component of $\mathcal{F}(G,\mathbb{Z}_2^{k})$ as $f_{A_1}+f'$. Applying Observation \ref{obs:sum} again, this latter flow is in the same connected component as $f'$. So we obtain that $f$ and $f'$ are in the same component of $\mathcal{F}(G,\mathbb{Z}_2^{k})$, and similarly that $g$ and $g'$ are in the same component of $\mathcal{F}(G,\mathbb{Z}_2^{k})$.

\smallskip

It remains to show that $f'$ and $g'$ are in the same component of $\mathcal{F}(G,\mathbb{Z}_2^{k})$.
If $A_2'\cap B_2'=\{0\}$, they both are in the same component as $f'+g'$, as an immediate consequence of Observation \ref{obs:sum} (applied twice, as above).
Thus, we may assume that $\dim(A_2'+ B_2')\le 5$.
Let $C$ be a 3-dimensional subspace of $\mathbb{Z}_2^k$ with $(A'_2+B_2')\cap C=\{0\}$, and let $h$ be a nowhere-zero $C$-flow in $G$.
As above, Observation \ref{obs:sum} (applied twice) shows that each of $f'$ and $g'$ is in the same connected component of $\mathcal{F}(G,\mathbb{Z}_2^{k})$ as $h$, which completes the proof.
\end{proof}

The next result is the final ingredient we need in order to 
prove the main result of this section.

\begin{lemma}\label{lem:gentocompo}
Let $G$ be a 2-edge-connected graph and $k\ge 8$ be an integer. For any nowhere-zero $\mathbb{Z}_2^k$-flow $f$ in $G$, there exists a composite nowhere-zero $\mathbb{Z}_2^k$-flow $g$ in $G$ such that $f$ and $g$ are in the same connected component of $\mathcal{F}(G,\mathbb{Z}_2^{k})$.
\end{lemma}

\begin{proof}
  Let $G$ be a 2-edge-connected graph. We proceed by induction on
  $\ve(G)$. Assume first that $G$ has a
  2-edge-cut $\{e_1,e_2\}$. Let $f'$ be the restriction of $f$ to $G/e_1$, the (multi)graph obtained from $G$ by identifying the two
  endpoints of $e_1$ into a single vertex. By induction, $f'$ lies in the same connected component as some composite nowhere-zero $\mathbb{Z}_2^{k}$-flow $g'$ of $G/e_1$ in $\mathcal{F}(G/e_1,\mathbb{Z}_2^{k})$. It can then be checked that the $\mathbb{Z}_2^{k}$-flow $g$ of $G$ obtained from $g'$ by uncontracting $e_1$ and assigning it flow value $-g(e_2)$ is nowhere-zero, composite and lies in the same connected component of $\mathcal{F}(G,\mathbb{Z}_2^{k})$ as $f$, as desired.
   Hence, we can assume
  without loss of generality that $G$ is 3-edge-connected, and in
  particular, $\e(G)\ge \tfrac32\, \ve(G)$.

  Consider a  linear subspace $B$ of $\mathbb{Z}_2^{k}$, taken uniformly at random among all 4-dimensional linear subspaces. Let $A$ be a linear subspace such that $\mathbb{Z}_2^{k}=A\oplus B$. Since $k\ge 8$, $A$ and $B$ have dimension at least 4. We can write uniquely every element $x\in \mathbb{Z}_2^{k}$  as $x_A+x_B$, with $x_A\in A$ and $x_B\in B$. By extension, we write every $\mathbb{Z}_2^{k}$-flow $h$ of $G$ as $h=h_A+h_B$, where $h_A$ is an $A$-flow of $G$ and $h_B$ is a $B$-flow of $G$.
  
 For every non-zero element $x\in \mathbb{Z}_2^{k}$, the probability that $x_A$ is zero is the probability that $x\in B$, which is $\tfrac{2^4-1}{2^k-1}\le 1/17$. It follows that in expectation, the number of edges $e$ of $G$ for which $f_A(e)$ is non-zero is at least \[\tfrac{16}{17}\cdot \e(G)\ge \tfrac{16}{17}\cdot \tfrac32\, \ve(G)> \tfrac43\, \ve(G).\] From now on, we fix a pair of complementary linear subspaces $A,B$ of dimension at least 4, such that the number of edges $e$ of $G$ for which $f_A(e)$ is non-zero is greater than $\tfrac43\, \ve(G)$.

\smallskip

 Let $G'$ be the subgraph of $G$ spanned by the edges that are non-zero
 in $f_A$. By the paragraph above, $\e(G')> \tfrac43\, \ve(G)\ge \tfrac43\,
 \ve(G')$. In particular, either $\ve(G')=2$ and $\e(G')\ge 2$, or  $\ve(G')\ge 3$ and $\e(G')>
 \tfrac43\,\ve(G')-2$. It follows from Lemma \ref{lem:kevin} that in both cases $G'$ (and thus $G$) contains a subgraph $H$
 which is a cycle of length at most 5 or a $(\le 2)$-subdivision of a
 3-edge-connected graph. By definition, $f_A$ is
 nowhere-zero in $H$.

 We contract $H$ into a single vertex $v^*$ in $G$. The
 resulting graph is denoted by $G^*$, and the restriction of the nowhere-zero flow $f$ to $G^*$ is noted by $f^*$. By induction, there exists a composite nowhere-zero $\mathbb{Z}_2^{k}$-flow $g^*$ of $G^*$ in the same component of $f^*$ in $\mathcal{F}(G^*,\mathbb{Z}_2^{k})$. Let $h_A$ be a nowhere-zero $A$-flow in $G$ and $h_B$ be a nowhere-zero $B$-flow in $G$ (note that $h_A$ and $h_B$ exist, as $A$ and $B$ have dimension at least 4 and any 2-edge-connected graph has a nowhere-zero $\mathbb{Z}_2^\ell$-flow for any $\ell\ge 3$). Let $h^*$ be the restriction of $h=h_A+h_B$ to $G^*$. Note that $h$ and $h^*$ are composite nowhere-zero flows, by definition. Our goal in the remainder of the proof is to show that $f$ and $h$ are in the same component of $\mathcal{F}(G,\mathbb{Z}_2^{k})$. Since $k\ge 8$, it follows from Lemma \ref{lem:composite} that there is a path between $g^*$ and $h^*$ in  $\mathcal{F}(G^*,\mathbb{Z}_2^{k})$, and thus $f^*$ and $h^*$ are also in the same component of $\mathcal{F}(G^*,\mathbb{Z}_2^{k})$. Hence, there exists a
 sequence of values $(\lambda_i)_{1\le i \le \ell}$ from
 $\mathbb{Z}_2^{k}$  and cycles $\mathcal{C}^*=(C_i^*)_{1\le i \le \ell}$ of $G^*$
 such that $h^*$ can be obtained from $f^*$ by successively adding
 flow value $\lambda_i$ along cycle $C^*_i$ for $i=1,\ldots,\ell$, in
 such a way that all intermediate flows are nowhere-zero in $G^*$.

 Note that for each cycle $C_i^*$ in the sequence $\mathcal{C}^*$, there exists a
 cycle $C_i$ in $G$ that coincides with $C_i^*$ on $E(G)\setminus E(H)$. Starting with flow $f$, our
 goal will be to successively add
 flow value $\lambda_i$ along $C_i$ in $G$ for
 $i=1,\ldots,\ell$. By induction this will guarantee that all edges from $E(G)\setminus E(H)$
 remain non-zero during the process, but we need some additional preparation
 in $H$ before each step to make sure that edges of
 $H$ remain non-zero as well. We will maintain the property that
 just  after we add some flow value $\lambda_i$ along $C_i$, the
 current flow $q$ is such that $q_A$ is nowhere-zero in $H$. This
 property holds initially with $q=f$, by definition of $H$. 

Assume now that we are somewhere in the process, with some current nowhere-zero flow
$q=q_A+q_B$ such that $q_A$ is nowhere-zero in $H$. Our immediate goal is to
add some flow value $\lambda=\lambda_A+\lambda_B$ to some cycle $C$
of $G$, and we know (by induction) that all edges of $E(G)\setminus E(H)$ will remain non-zero
after this addition, so we only need to focus on the edges of
$C\cap E(H)$. By Corollary  \ref{cor:raphael},  there
exists a $B$-flow $r_B$ in $H$ such that for
each edge $e\in E(H)$, $r_B(e)\neq -q_B(e)$, so that
each edge $e$ satisfies $q_B(e)+r_B(e)\ne 0$. By Lemma \ref{lem:linear}, the flow $r_B$ can be decomposed into a sum of flows
whose supports are
cycles of $H$, so there is a path in $\mathcal{F}(G,
\mathbb{Z}_2^{k}) $ between flows $q$ and $q+r_B$ where all
intermediate flows $q'=q_A'+q_B'$ satisfy $q_A'=q_A$, and in particular
$q_A'$ is nowhere-zero in $E(H)$. In particular, all intermediate
flows between $q$ and $q+r_B$ are nowhere-zero. So, we have now reached the nowhere-zero $\mathbb{Z}_2^k$-flow $q+r_B$, whose main property is that both $q_A$ and $q_B+r_B$ are nowhere-zero in $H$.

By Corollary  \ref{cor:raphael}, there exists an $A$-flow $s_A$ in $H$ such that 
\begin{enumerate}[(i)]
    \item for
each edge $e\in E(H)\cap C$, $s_A(e)\neq -q_A(e)-\lambda_A$, and
\item for
each edge $e\in E(H)\setminus C$, $s_A(e)\neq-q_A(e)$.
\end{enumerate}
As before, the flow $s_A$ can be decomposed into a sum of flows
whose supports are
cycles of $H$, so there is a path in $\mathcal{F}(G,
\mathbb{Z}_2^{k}) $ between flows $q+r_B$ and $q+r_B+s_A$ where all
intermediate flows $q'=(q_A',q_B')$ satisfy $q_B'=q_B+r_B$, and in particular
$q_B'$ is nowhere-zero in $H$. In particular, all intermediate
flows between $q+r_B$ and $q+r_B+s_A$ are nowhere-zero. So, we have now reached the nowhere-zero $\mathbb{Z}_2^k$-flow $q+r_B+s_A$, whose main properties are that (1) $q_B+r_B$ is nowhere-zero in $H$, (2) for each edge $e\in E(H)\cap C$, $q_A(e)+s_A(e)\neq -\lambda_A$, and (3) for
each edge $e\in E(H)\setminus C$, $q_A(e)+s_A(e)\neq 0$.

 Once we have reached $q+r_B+s_A$, we can safely add $\lambda$
  along the cycle $C$: since the support of $r_B$ and $s_A$ is contained in $H$, all edges of $E(G)\setminus E(H)$ remain
  non-zero by induction, and by (2) and (3) above, the 
  nowhere-zero flow obtained after the addition of $\lambda$ along $C$,
  call it $q'$, is such that $q_A'$ is nowhere-zero in $E(H)$, which
  is the property we needed to maintain.

  We can now repeat the procedure above to add flow value
  $\lambda_i$ to each cycle $C_i$ in the sequence, until we reach some
  nowhere-zero flow $q$ which coincides with the composite flow $h$ on
  $E(G)\setminus E(H)$, and is such that $q_A$ is nowhere-zero in
  $E(H)$. As $h_A$ is nowhere-zero, this implies that $q_A$ is also nowhere-zero in $E(G) \setminus E(H)$, and thus in $G$. 
  By Observation \ref{obs:sum}, $q$ is in the same connected component as
  the nowhere-zero flow $q_A+h_B$ in $\mathcal{F}(G,\mathbb{Z}_2^k)$. The flow $h_B$ is also
  nowhere-zero in $G$, so by Observation \ref{obs:sum} again, $q_A+h_B$ is in the same connected component as $h=h_A+h_B$, as desired. 
\end{proof}

We can now easily deduce our main result.

\begin{theorem}\label{thm:groupflow}
For any 2-edge-connected graph $G$,
$\mathcal{F}(G,\mathbb{Z}_2^{8})$ is connected.
\end{theorem}

\begin{proof}
By Lemma \ref{lem:gentocompo}, every nowhere-zero $\mathbb{Z}_2^{8}$-flow in $G$ is in the same component of $\mathcal{F}(G,\mathbb{Z}_2^{8})$ as a composite nowhere-zero $\mathbb{Z}_2^{8}$-flow, and by Lemma \ref{lem:composite}, all such flows are in the same connected component. It follows that $\mathcal{F}(G,\mathbb{Z}_2^{8})$ is connected.
\end{proof}

In this section our aim was to minimize the size of a group $A$ such that $\mathcal{F}(G,A)$ is connected for every 2-edge-connected graph $G$, and for this it was somewhat more convenient to view $\mathbb{Z}_2^8$ as a vector space rather than a group. However the proof of Theorem \ref{thm:groupflow} can be carried more generally when the underlying group is the direct product of sufficiently many abelian groups. In Section \ref{sec:largeabelian}, we revisit the results of this section and explain how they can be extended to this more general setting (at the cost of significantly worse bounds). 

\medskip

Our aim in the remainder of this section is to show that for any sufficiently large abelian group $A$ and any 2-edge-connected graph $G$, $\mathcal{F}(G, A)$ is connected.

\subsection{Large cyclic subgroups}\label{sec:cyclic}

The goal of this section is to show that if a group $A$ contains a sufficiently large cyclic group, then for any 2-edge-connected graph $G$, $\mathcal{F}(G, A)$ is connected. For this we will need to do a surprising detour via integer flows (this is surprising in the sense that we understand integer flow reconfiguration much less than group flow reconfiguration, as witnessed by the much weaker results of Section \ref{sec:intflows} below).

\begin{lemma}\label{lem:cyclicgroups1}
Let $p$ and $q$ be coprime positive integers with $q\geq 55$ and $p>q^8\cdot 27+28$, and let $G$ be a $2$-edge-connected graph. Let $F:=\{-27,-26,\dots,26,27\}\subseteq \mathbb{Z}_p$.
For any nowhere-zero $\mathbb{Z}_p$-flow $f$ in $G$, there exists a nowhere-zero $\mathbb{Z}_p$-flow $g$ in $G$ such that $f$ and $g$ are in the same component of $\mathcal{F}(G,\mathbb{Z}_p)$ and $g(E(G))$ is disjoint from $F$.
\end{lemma}
\begin{proof}
Let $G$ be a 2-edge-connected graph. We proceed by induction on
  $\ve(G)$. Assume first that $G$ has a
  2-edge-cut $\{e_1,e_2\}$. Let $f'$ be the restriction of $f$ to $G/e_1$, the (multi)graph obtained from $G$ by identifying the two
  endpoints of $e_1$ into a single vertex. By induction, $f'$ lies in the same connected component as some  nowhere-zero $\mathbb{Z}_{p}$-flow $g'$ of $G/e_1$ in $\mathcal{F}(G/e_1,\mathbb{Z}_{p})$ such that $g'(E(G/e_1))$ is disjoint from $F$. It can then be checked that the $\mathbb{Z}_{p}$-flow $g$ of $G$ obtained from $g'$ by uncontracting $e_1$ and assigning it flow value $-g(e_2)$ is nowhere-zero, lies in the same connected component of $\mathcal{F}(G,\mathbb{Z}_{p})$ as $f$, and is such that $g(E(G))$ is disjoint from $F$, as desired.
   Hence, we can assume
  without loss of generality that $G$ is 3-edge-connected, and in
  particular, $\e(G)\ge \tfrac32\, \ve(G)$.

As $q$ and $p$ are coprime, $q \pmod p$ has a multiplicative inverse $q^{-1} \pmod p$ in the ring $\mathbb{Z}_p$, and the function $\sigma: z\mapsto q\cdot z$ is an automorphism of $\mathbb{Z}_p$. Note that $\sigma^i$ with $\sigma^i(z)= q^i\cdot z$ is also an automorphism for every $i\in \mathbb{Z}$ (where $q^i\pmod p \equiv  (q^{-1})^{-i}\pmod p$ for $i<0$). 

Pick a random integer $t$ in $[0,8]$, and consider the automorphism $\sigma^t$ of $\mathbb{Z}_p$.
Since $p>q^8\cdot 27+28$, for each element $z\in \mathbb{Z}_p$ there is at most one $i\in [0,8]$ such that $q^i\cdot z\in F \subseteq \mathbb{Z}_p$, and so the expected number of edges $e$ such that $\sigma^t(f(e))\in F$ is at most $|E(G)|/9$.
Thus we may pick $t_0\in [0,8]$ so that for the automorphism $\sigma^{t_0}$ of $\mathbb{Z}_p$, we have that the set $E'$ of edges $e$ with $\sigma^{t_0}(f(e))\in \mathbb{Z}_p\setminus F$ has size at least $8|E(G)|/9$. 
Given that $G$ is $3$-edge-connected, this means $|E'|\geq \frac{4}{3}|V(G)|$.
%Consider a random automorphism $\sigma:\mathbb{Z}_p\to \mathbb{Z}_p$. For every edge $e$, since $f(e)\neq 0$, the probability that $\sigma(f(e))\in \{-139,138,\dots,138,139\}\subseteq \mathbb{Z}_p$ is $279/p$.
%Thus we may choose an automorphism $\sigma_0$ such that at most $279|E(G)|/p$ edges $e$ satisfy $\sigma_0(f(e))\in F$.
%Since $p>??$ and $G$ is $3$-edge connected, the set $E'$ of edge $e\in E(G)$ satisfying $\sigma_0(f(e))\in \mathbb{Z}_p\setminus F$ has size at least $\frac{4}{3}|V(G)|$.
By Lemma~\ref{lem:kevin}, there is a subgraph $H$ such that $E(H)\subseteq E'$ and $H$ is either a cycle of length at most $5$ or a $(\leq 2)$-subdivision of a $3$-edge-connected graph. 
Contract $H$ into a single vertex $v^*$, and call the resulting graph $G^*$.
Let $f^*$ be the restriction of $f$ to $G^*$.
By induction, there exists a nowhere-zero $\mathbb{Z}_p$-flow $g^*$ in $G^*$ such that $f^*$ and $g^*$ are in the same component of $\mathcal{F}(G^*,\mathbb{Z}_p)$ and $g^*(E(G^*))$ is disjoint from $F$.
Let $(f^*_i)_{0\leq i\leq \ell}$ be a sequence of nowhere-zero $\mathbb{Z}_p$-flows in $G^*$ such that $f^*_0=f^*$, $f^*_{\ell}=g^*$ and for each $1\le i\le \ell$ the flow $f^*_i-f^*_{i-1}$ is supported on a cycle $C^*_i$ of $G^*$.
We now construct a sequence $f_0,f_1,\dots ,f_{\ell}$ of nowhere-zero $\mathbb{Z}_p$-flows in $G$ such that $f_0=f$ and for each $0\le i\le  \ell$:
\begin{enumerate}
    \item\label{item:di1}  the restriction of $f_i$ to $G^*$ is $f^*_i$,
    \item\label{item:di2} if $i\ge 1$, then $f_i$ is in the same component of $\mathcal{F}(G,\mathbb{Z}_p)$ as $f_{i-1}$,
    \item\label{item:di3} there is an integer $t_i$  such that $\sigma^{t_i}(f_i(E(H)))$ is disjoint from $F$.%, and
%    \item\label{item:ti} there is a non-negative integer $t_i$ such that $ \sigma_i(q^{t_i}\cdot z,a)= (z,a)$ for all $(z,a)\in \mathbb{Z}_p\times A$.
    \end{enumerate}
    
Since $f_0$ and $t_0$ are already given, we construct the sequence recursively as follows for $i\ge 1$.
Let $C_i$ be a cycle in $G$ whose restriction to $G^*$ is $C^*_i$, and let $\lambda_i\in \mathbb{Z}_p$ be the value so that $f^*_i$ is obtained from $f^*_{i-1}$ by adding $\lambda_i$ along $C^*_i$.
%Since $p$ has a unique prime factor, there is some $q\in \{279,280\}\subseteq \mathbb{Z}$ such that $p$ and $q$ are coprime. 
Let $x$ be an arbitrary integer, and consider the set $S_{x}$ of integers $z\in [-27,27]$ such that for some $z'\in [-27,27]$ we have $x+z\equiv q\cdot z' \pmod{p}$.
If $z_1$ and $z_2$ are in $S_x$, then there exist $z'_1, z'_2\in [-27,27]$ and $k\in \mathbb{Z}$ such that $z_1-z_2=q(z'_1-z'_2)+kp$. Since $|z_1-z_2-q(z'_1-z'_2)|\leq 54(q+1)<p$ we must have $k=0$, and hence $z_1-z_2=0=z'_1-z'_2$. 
Hence $|S_x|\leq 1$.
It follows that for each edge $e\in E(C_i)$, there is at most one $z\in F$ such that $\sigma^{t_{i-1}}(f_{i-1}(e)+\lambda_i)+z\in q\cdot F$. Call this element $\phi(e)$ if it exists. 
Likewise, for each $e\in E(H)\setminus E(C_i)$, there is at most one $z\in F$ such that $\sigma^{t_{i-1}}(f_{i-1}(e))+z\in q\cdot F$. Call this element $\phi(e)$ if it exists.
For each $e\in E(H)$ with $\phi(e)$ not yet defined, choose $\phi(e)\in F$ arbitrarily.
Now by Corollary~\ref{cor:28flow}, there is a $28$-flow $h$ of $H$ such that for each edge $e\in E(H)$, $h(e)$ is distinct from the integer from $[-27,27]$ corresponding to $\phi(e)$.

By Lemma~\ref{lem:intflowdec}, there is a sequence  of $28$-flows $h_0,h_1,\dots, h_s$ in $H$ such that $h_0$ is everywhere-zero, $h_s=h$ and for each $j\in \{1,\dots, s\}$ the flow $h_j-h_{j-1}$ is supported on a cycle $D_j$ in $H$ and $h_j(e)$ lies between 0 and $h(e)$ for every edge $e\in E(H)$.
For each $j\in \{0,1,\dots, s\}$, we define a $\mathbb{Z}_p$-flow $f_{i-1,j}$ in $G$ as follows. If $e\in E(G)\setminus E(H)$, let $f_{i-1,j}(e):=f_{i-1}(e)$. If  $e\in E(H)$, let $f_{i-1,j}(e):=f_{i-1}(e)+q^{-t_{i-1}}\cdot h_j(e) \pmod p$.
%$\sigma^{t_{i-1}}(f_{i-1,j}(e))=\sigma^{t_{i-1}}(f_{i-1}(e))+h_j(e)\cdot (1,0)$. 
Observe that \[q^{t_{i-1}}\cdot (f_{i-1}(e)+q^{-t_{i-1}}\cdot h_j(e))\equiv q^{t_{i-1}}\cdot f_{i-1}(e)+h_j(e)\pmod p.\] By \eqref{item:di3},  $q^{t_{i-1}}\cdot f_{i-1}(e) \pmod p \notin F$ and thus $q^{t_{i-1}}\cdot (f_{i-1}(e)+q^{-t_{i-1}}\cdot h_j(e))\not\equiv 0 \pmod p$. This implies that $f_{i-1}(e)+q^{-t_{i-1}}\cdot h_j(e)\not\equiv 0 \pmod p$ and since $f_{i-1}$ is nowhere-zero, $f_{i-1,j}$ is also nowhere-zero (and thus $f_{i-1,s}$ is nowhere-zero), and by construction $f_{i-1,s}$ is in the same component of $\mathcal{F}(G,\mathbb{Z}_p)$ as $f_{i-1}$. 
We now obtain the desired flow $f_i$ from $f_{i-1,s}$ by shifting each edge of $C_i$ by $\lambda_i$. For each $e\in E(H)$, we then have  $q^{t_{i-1}}\cdot f_{i}(e)\pmod p \not\in q\cdot F$ by our choice of $\phi:E(H)\to [-27,27]$. It follows that $q^{t_{i-1}-1}\cdot f_{i}(e)\pmod p \not\in F$ (in particular $f_{i}(e)\not\equiv 0\pmod p$, so $f_i$ is nowhere-zero). By setting $t_i:=t_{i-1}-1$, it follows that we have that $\sigma^{t_i}(f_i(E(H)))$ is disjoint from $F$. This shows that the sequence $f_0,\ldots,f_\ell$ satisfies items \eqref{item:di1}, \eqref{item:di2}, and \eqref{item:di3}, as required.

\smallskip

By definition of $f_\ell$ and $g^*$, we have that for every $e\in E(G)\setminus E(H)$, $f_\ell(e)=g^*(e)\not\in F$. By  \eqref{item:di3}, for every edge $e\in E(H)$, we have $\sigma^{t_\ell}(f_\ell(e))\not\in F$. It now remains to show that there is a flow $g$ in the same component of $\mathcal{F}(G,\mathbb{Z}_p)$ as $f_\ell$ such that $g-f_\ell$ is supported on $E(H)$ and $g(E(H))$ is disjoint from $F$. 
By Euler's Theorem, we have that $q^{\varphi(p)}\equiv 1 \pmod{p}$, and so there is an integer $t'\ge 0$ such that $q^{t_\ell-t'}\equiv 1\pmod{p}$.
By the same argument as above (applied with $\lambda_i=0$, that is without any intermediate shifting along a cycle $C_i$), we can find a sequence of flows $g_0,g_1,\dots, g_{t'}$ such that $g_0=f_\ell$ and for each $i\in \{1,\dots, t'\}$ we have:
\begin{enumerate}
    \item the support of $g_i-g_{i-1}$ is a subset of $E(H)$,
    \item $g_i$ is in the same component of $\mathcal{F}(G,\mathbb{Z}_p)$ as $g_0$, and
    \item for each $e\in E(H)$, we have $\sigma^{t_\ell-i}(g_i(e))\notin F$.
\end{enumerate}
Note that $\sigma^{t_\ell-t'
}$ is the identity by our choice of $t'$.
Now the flow $g_{t'}$ is in the same component of $\mathcal{F}(G,\mathbb{Z}_p)$ as $f$, for each $e\in E(G)\setminus E(H)$ we have $g_{t'}(e)=f_\ell(e)=g^*(e)\notin F$, and for each $e\in E(H)$ we have $g_{t'}(e)=\sigma^{t_\ell-t'}( g_{t'}(e))\notin F$.
This completes the proof.
\end{proof}

In the proof of the next result, we will need to work in a slightly relaxed setting. Consider a graph $G$ and a fixed subset $S\subseteq E(G)$. An $A$-flow is said to be $S$-nowhere-zero if all edges from $S$ are non-zero.  The reconfiguration graph $\mathcal{F}(G,A;S)$ is defined as the graph whose vertices are all $S$-nowhere-zero $A$-flows of $G$, with an edge between two flows if the support of their difference is a cycle, and the reconfiguration graph $\mathcal{F}(G,k;S)$ is defined analogously for integer flows. Note that Observation \ref{obs:intgroup} also applies in the setting of $S$-nowhere-zero flows. We can also restate Lemma \ref{lem:cyclicgroups1} as follows in the context of $S$-nowhere-zero flows (by simply applying it to the support of the flow).

\begin{corollary}
\label{cor:cyclicgroups1}
Let $p$ and $q$ be coprime positive integers with $q\geq 55$ and $p>q^8\cdot 27+28$, and let $G$ be a $2$-edge-connected graph, and let $S$ be a subset of edges of $G$. Let $F:=\{-27,-26,\dots,26,27\}\subseteq \mathbb{Z}_p$.
For any $S$-nowhere-zero $\mathbb{Z}_p$-flow $f$ in $G$, there exists a $\mathbb{Z}_p$-flow $g$ in $G$ such that $\supp(f)=\supp(g)$ (in particular, $g$ is $S$-nowhere-zero), $f$ and $g$ are in the same component of $\mathcal{F}(G,\mathbb{Z}_p;S)$, and $g(S)$ is disjoint from $F$.
\end{corollary}

We will use Corollary \ref{cor:cyclicgroups1} in combination with the next result, which implies that any two  $\mathbb{Z}_p$-flows of a graph $G$ avoiding flow values that are too close to $0\pmod p$ are in the same component of $\mathcal{F}(G,\mathbb{Z}_p)$.

\begin{lemma}\label{lem:cyclicgroups2}
Let $p>36$ be an integer, let $G$ be an oriented $2$-edge-connected graph, let $h'$ be a nowhere-zero $6$-flow in $G$, and let $h_{\mathbf{7}}$ be the nowhere-zero $\mathbb{Z}_p$-flow with $h_{\mathbf{7}}(e)\equiv h'(e) \cdot 7\pmod p$ for each $e\in E(G)$.
If $f$ is a  $\mathbb{Z}_p$-flow in $G$ such that $f(\supp(f))$ is disjoint from $F:=\{-10,-9,\dots,9,10\}\subseteq \mathbb{Z}_p$, then $f$ and $h_{\mathbf{7}}$ are in the same component of $\mathcal{F}(G,\mathbb{Z}_p;\supp(f))$.
\end{lemma}
\begin{proof}
Let $S=\supp(f)$. All flows considered in this proof will be $S$-nowhere-zero (but not necessarily nowhere-zero).
We will first show that the integer counterparts of $f$ and $h_{\mathbf{7}}$ are in the same connected component of $\mathcal{F}(G,p;S)$.

By Theorem \ref{thm:tutteintmod}, there is a $p$-flow $f'$ in $G$ with $f'(e)\equiv f(e) \pmod{p}$ for each $e\in E(G)$. Note that $f$ is an $S$-nowhere-zero $\mathbb{Z}_p$-flow, and thus $f'$ is an $S$-nowhere-zero $p$-flow. We will reconfigure $f'$ into $7h'$ using several intermediate steps, the main of which is another $p$-flow $f_b'+h'$; we now need a few more steps to define $f_b'$.
Let $f_a$ be the $\mathbb{Z}_6$-flow such that $f_a(e)\equiv f'(e)\pmod{6}$ for each $e\in E(G)$, and let $f'_a$ be a $6$-flow such that $f'_a(e)\equiv f_a(e)\pmod{6}$ for each $e\in E(G)$ (the existence of $f'_a$ again follows from Theorem \ref{thm:tutteintmod}).
Since $f(S)$ is disjoint from $F$, we have that $f'$ is an $S$-nowhere-zero $(p-10)$-flow and so $f'_b:=f'-f'_a$ is an $S$-nowhere-zero $(p-5)$-flow. Observe that for every edge $e\notin S$, $f'(e)=0$, $f_a(e)\equiv 0 \pmod 6$, $f_a'(e)=0$, and thus $f_b'(e)=0$.
Note that for each $e\in E(G)$, $f'_b(e)\equiv 0\pmod{6}$. For the convenience of the reader, the relations between the different flows introduced above are illustrated in Figure \ref{fig:diag}.

\begin{figure}[htb]
    \centering
    \includegraphics[scale=1.2]{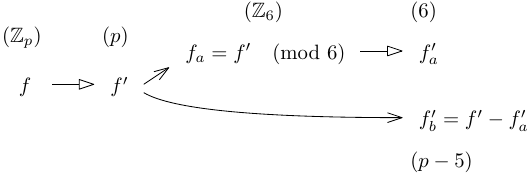}
    \caption{The flows originating from $f$.}
    \label{fig:diag}
\end{figure}

By Lemma~\ref{lem:intflowdec} applied to the flow $h'-f'_a$, there is a sequence of flows $h'_0,h'_1,\dots, h'_s$ such that $h'_0=f'_a$ and $h'_s=h'$ and for each $i\in \{1,\dots, s\}$:
\begin{enumerate}
    \item the support of $h'_i-h'_{i-1}$ is a cycle,
    \item for each $e\in E(G)$, $h'_i(e)$ lies between $f'_a(e)$ and $h'(e)$.
    \end{enumerate}
    
    For each $i\in \{0,1,\dots, s\}$, let $f'_{i}=f'_b+h'_{i}$, so in particular $f'_0=f'$ and $f'_s=f'_b+h'$. Since $f'_b$ is a $(p-5)$-flow and $h'$ is a $6$-flow, $f'_b+h'$ is a $p$-flow.
    By construction, for each $i\in \{0,1,\dots, s\}$ and each $e\in E(G)$, we have that $f'_i(e)$ is between $f'(e)$ and $f'(e)-f'_a(e)+h'(e)$.
    This means $|f'(e)-f'_i(e)|\leq |h'(e)-f'_a(e)|\leq 10$.
    By definition of $f$ and $f'$, we have $10<|f'(e)|<p-10$ for any edge $e\in S$, and thus  $f'_i$ is an $S$-nowhere-zero $p$-flow. We deduce that $f'$ and $f'_s=f_b'+h'$ are in the same component of $\mathcal{F}(G,p;S)$.

    Since $f'_b(e)\equiv 0\pmod{6}$ for each $e\in E(G)$, we have that $f^*:=\frac{1}{6}f'_b$ is a $\lfloor\frac{p}{6}\rfloor$-flow. 
    By Lemma~\ref{lem:intflowdec} applied to the flow $h'-f^*$, there is a sequence of flows $h^*_0,h^*_1,\dots, h^*_t$ such that $h^*_0=f^*$ and $h^*_t=h'$ and for each $i\in \{1,\dots, t\}$:
\begin{enumerate}
    \item the support of $h^*_i-h^*_{i-1}$ is a cycle,
    \item for each $e\in E(G)$, $h^*_i(e)$ lies between $f^*(e)$ and $h'(e)$.
    \end{enumerate}
    
    For each $i\in \{0,\dots, t\}$, let $f^*_i:=6h^*_{i}+h'$, so that in particular $f^*_0=f_b'+h'=f'_s$ and $f^*_t=7h'$.
    By construction, for each $i\in \{1,\dots ,t\}$ and each $e\in E(G)$ we have that $f^*_i(e)$ is between $f'_s(e)$ and $7h'(e)$ and that $f^*_i(e)\equiv h'(e)\pmod{6}$, so $f^*_i$ is a nowhere-zero $p$-flow.
    Hence $f'_s$ is in the same component of $\mathcal{F}(G,p)$ as $7h'$, and thus also in the same component of $\mathcal{F}(G,p;S)$

    We now have that $f'$ is in the same component of $\mathcal{F}(G,p;S)$ as $7h'$.
    By Observation \ref{obs:intgroup} (applied in the context of $S$-nowhere-zero-flows), it follows that $f$ is in the same component of $\mathcal{F}(G,\mathbb{Z}_p;S)$ as $h_{\mathbf{7}}$, as desired.
\end{proof}
As a corollary of Lemmas~\ref{lem:cyclicgroups1} and~\ref{lem:cyclicgroups2}, we obtain the following.

\begin{corollary}\label{cor:largecyclicpq}
If $p$ and $q$ are coprime positive integers with $q\geq 55$ and $p>q^8\cdot 27+28$, $A$ is an abelian group and $G$ is a 2-edge-connected graph, then $\mathcal{F}(G,\mathbb{Z}_p\times A)$ is connected.
\end{corollary}
\begin{proof}
 Let $f=(f_1,f_2)$ be a nowhere-zero $(\mathbb{Z}_p\times A)$-flow in $G$. Let $S$ be the support of $f_1$.
 By Corollary~\ref{cor:cyclicgroups1} there is an $S$-nowhere-zero $\mathbb{Z}_p$-flow $f_1'$ of $G$ such that $f_1$ and $f_1'$ are in the same component of $\mathcal{F}(G,\mathbb{Z}_p;S)$, and $f_1'(S)$ is disjoint from $\{-27,-26,\dots,26,27\}$. By Lemma~\ref{lem:cyclicgroups2}, $f_1'$ is in the same connected component of $\mathcal{F}(G,\mathbb{Z}_p;S)$ as the nowhere-zero flow $h_{\mathbf{7}}$ (defined in the statement of Lemma~\ref{lem:cyclicgroups2}).

 Note that any path between $f_1$ and $h_{\mathbf{7}}$ in  $\mathcal{F}(G,\mathbb{Z}_p;S)$ lifts to a path between $f=(f_1,f_2)$ and $(h_{\mathbf{7}},f_2)$ in $\mathcal{F}(G,\mathbb{Z}_p\times A)$, since $f$ is nowhere-zero and thus for every edge $e\notin S$, $f_2(e)\ne 0_A$. 
 This shows that $f$ is in the same component of $\mathcal{F}(G,\mathbb{Z}_p\times A)$ as $(h_{\mathbf{7}},f_2)$. 
 
 Similarly, any nowhere-zero $(\mathbb{Z}_p\times A)$-flow $g=(g_1,g_2)$ is in the same connected component of $\mathcal{F}(G,\mathbb{Z}_p\times A)$ as $(h_{\mathbf{7}},g_2)$. Since $h_{\mathbf{7}}$ is a nowhere-zero flow, it follows from Observation \ref{obs:sum2} that $f$ and $g$ are in the same connected component of $\mathcal{F}(G,\mathbb{Z}_p\times A)$. This completes the proof.
\end{proof}

We directly obtain the following.

\begin{corollary}\label{cor:largecyclic}
For any prime power $p> 56^8\cdot 27+28$, any abelian group $A$, and any $2$-edge-connected graph $G$, $\mathcal{F}(G,\mathbb{Z}_p\times A)$ is connected.
\end{corollary}

\begin{proof}
Take $q\in \{55,56\}$ coprime with $p$ and apply Corollary \ref{cor:largecyclicpq}.
\end{proof}

\subsection{Large abelian groups}\label{sec:largeabelian}

We now revisit the components of the proof of Theorem \ref{thm:groupflow}, which states that for any 2-edge-connected graph $G$,
$\mathcal{F}(G,\mathbb{Z}_2^{8})$ is connected. The size of the group in the proof was optimized and for this it was convenient to view $\mathbb{Z}_2^{8}$ as a vector space over $\textrm{GF}(2)$. Here we go back to the group view. In this context, we say that for two abelian groups $A$ and $B$, an $(A\times B)$-flow $f$ in a graph $G$ is \emph{composite} if $f=(f_A,f_B)$, where $f_A$ is a nowhere-zero $A$-flow in $G$ and $f_B$ is a nowhere-zero $B$-flow in $G$.

\medskip

The proof of Lemma \ref{lem:composite} readily extends to the following slightly more general result.

\begin{lemma}\label{lem:composite2}
Let $G$ be a 2-edge-connected graph and let $\Gamma_1,\ldots,\Gamma_k$ be $k\ge 8$ non-trivial abelian groups. Let $\Gamma=\Gamma_1\times \cdots \times \Gamma_k$. Then all composite $\Gamma$-flows in $G$ are in the same connected component of $\mathcal{F}(G,\Gamma)$.
\end{lemma}

We now explain how to modify the proof of Lemma \ref{lem:gentocompo} to prove the following result (note the slightly weaker bound on the number of groups).

\begin{lemma}\label{lem:gentocompo2}
Let $G$ be a 2-edge-connected graph and let $\Gamma_1,\ldots,\Gamma_k$ be $k\ge 46$ non-trivial abelian groups. Let $\Gamma=\Gamma_1\times \cdots \times \Gamma_k$. For any nowhere-zero $\Gamma$-flow $f$ in $G$, there exists a composite nowhere-zero $\Gamma$-flow $g$ in $G$ such that $f$ and $g$ are in the same connected component of $\mathcal{F}(G,\Gamma)$.
\end{lemma}

\begin{proof}
The proof proceeds almost exactly as the proof of Lemma \ref{lem:gentocompo}. The only two differences are the following. 
First, in order to define a random splitting $A\oplus B$ of $\Gamma$, we take a subset $I\subseteq [k]$ with $|I|=k-5$ uniformly at random and we set $A:=A_1\times \cdots\times A_k$, where $A_i=\Gamma_i$ if $i\in I$ and $A_i=\{0_{\Gamma_i}\}$ otherwise. Similarly, $B:=B_1\times \cdots\times  B_k$, where $B_i=\{0_{\Gamma_i}\}$ if $i\in I$ and $B_i=\Gamma_i$ otherwise. Note that $A$ and $B$ each contain at least 6 elements, and every element $x$ of $\Gamma$ can be written uniquely as $x=x_A+x_B$, with $x_A\in A$ and $x_B\in B$, and similarly every $\Gamma$-flow $f$ can be written uniquely as  $f=f_A+f_B$, where $f_A$ is an $A$-flow and $f_B$ is a $B$-flow.

For every non-zero element $x\in \Gamma$, the probability that $x_A$ is non-zero is at least $\tfrac{k-5}{k}\ge  41/46$. It follows that in expectation, for every nowhere-zero $\Gamma$-flow $f$ in $G$, the number of edges $e$ of $G$ for which $f_A(e)$ is non-zero is at least \[\tfrac{41}{46}\cdot \e(G)\ge \tfrac{41}{46}\cdot \tfrac32\, \ve(G)> \tfrac43\, \ve(G).\] From now on, we fix a pair of complementary subgroups $A,B$ as above, each being the direct product of at least 5 non-trivial abelian groups, and such that the number of edges $e$ of $G$ for which $f_A(e)$ is non-zero is greater than $\tfrac43\, \ve(G)$.

The proof now follows the lines of the proof of Lemma \ref{lem:gentocompo}, with only the following slight modifications. We use
\begin{itemize}
    \item Observation \ref{obs:sum2} instead of Observation \ref{obs:sum},
    \item Lemma \ref{lem:composite2} instead of Lemma \ref{lem:composite}, and \item  Corollary \ref{cor:gco} instead of Corollary \ref{cor:raphael}.
\end{itemize} 
As each of $A$ and $B$ is the direct product of at least 5 non-trivial abelian groups, each of $A$ and $B$ is the direct product of two abelian groups, one with at least 7 elements and the other with at least 4 elements, so Corollary \ref{cor:gco} can indeed be applied to $A$ and $B$.
\end{proof}

We can now prove the main result of this section.

\begin{theorem}\label{thm:largegroup}
For every 2-edge-connected graph $G$ and every abelian group $A$ with $|A|\ge (56^8\cdot 27+28)^{45}+1$, $\mathcal{F}(G,A)$ is connected.
\end{theorem}

\begin{proof}
Write $A=\mathbb{Z}_{i_1}\times \cdots \times \mathbb{Z}_{i_k}$, where each $i_j$ is a prime power. Let $f$ and $g$ be two nowhere-zero $A$-flows in $G$.

Assume first that $k\ge 46$. Then by Lemma \ref{lem:gentocompo2} there are composite $A$-flows $f^*$ and $g^*$ such that $f$ and $f^*$ are in the same component of $\mathcal{F}(G,A)$, and $g$ and $g^*$ are in the same component of $\mathcal{F}(G,A)$. By Lemma \ref{lem:composite2}, $f^*$ and $g^*$ are in the same component of $\mathcal{F}(G,A)$, which implies that $f$ and $g$ are in the same component of $\mathcal{F}(G,A)$.

Assume now that $k\le 45$. As $|\Gamma|\ge (56^8\cdot 27+28)^{45}+1$, there is  an integer $1\le j \le k$ such that $i_j>56^8\cdot 27+28$, and it then follows from Corollary \ref{cor:largecyclic} that $f$ and $g$ are in the same connected component of $\mathcal{F}(G,\Gamma)$.
\end{proof}

\section{Integer flow reconfiguration in general graphs}\label{sec:intflows}

\begin{theorem}\label{thm:integerflow}
For every 2-edge-connected graph $G$, there exists an integer $k\ge 6$ such that $\mathcal{F}(G,k)$ is connected.
\end{theorem}

\begin{proof}
Let $t=5\,\e(G)+1$ and $k= 36\,\e(G) (\ve(G)+1)\ge (2t+1)(\ve(G)+1)+2$.

\smallskip

We first fix some target nowhere-zero $k$-flow $g$ in $G$, and show that all nowhere-zero $k$-flows  in $G$ are in the same connected component as $g$ in $\mathcal{F}(G,k)$. To define $g$, we take a nowhere-zero 6-flow $h$ of $G$ (whose existence follows from \cite{Seymour6flows}), and define $g(e)=(\tfrac{k}{6}+1)\cdot h(e)$ for any edge $e$ in $G$. As $k$ is divisible by 36, it follows from the definition of $h$ that $g(e)\not\equiv 0 \pmod 6$ for every edge $e$ in $G$.

Consider any nowhere-zero $k$-flow $f$ in $G$. The first step is to modify $f$ to ensure that all flow values of $f$ in $G$ are sufficiently far away from 0 and $\pm(k-1)$. 

As long as there exists an edge $e$ such that $|f(e)|<t$, or $|f(e)|>k-t$, we do the following. Up to reversing some edges (and replacing $f(e)$ by $-f(e)$ on each of these edges) we can assume that  all flow values are positive. We consider a directed cycle $C$ containing $e$ (such a cycle necessarily exists). We observe that there is an integer $t\le a\le k-t$ such that $C$ does not have any flow value in the interval $[a-t,a+t]$, since otherwise at least $\tfrac{k-2}{2t+1}>|V(G)|$ different flow values appear on $C$, which is a contradiction. We then subtract flow value $a$ along $C$, and in the resulting flow, all flow values of $C$ are at least $t$ and at most $k-t$ in absolute value, as desired.

We can now assume that for any edge $e$ in $G$, $t\le |f(e)|\le k-t$.

\smallskip

Next, we write every element $a\in [-k+1, k-1]$ as $a=6 a_1  +a_2$, where $0\le a_2<6$. By extension, we write $f=6 f_1+f_2$ and $g=6 g_1+ g_2$, where $f_2$ and $g_2$ have values in $[0,5]$.

By Lemma \ref{lem:linear}, we can decompose the $\mathbb{Z}_6$-flow $(g-f) \pmod 6\equiv (g_2-f_2)\pmod 6$ of $G$ into a sum of at most $\e(G)$ $\mathbb{Z}_6$-flows whose support are cycles of $G$. It follows that there is a sequence of $r\le \e(G)$ cycles $(C_i)_{1\le i \le r}$  of $G$ and a sequence of values $(\lambda_i)_{1\le i \le r}$ from $[1,5]$ such that $g_2 \pmod 6$ can be obtained from $f_2 \pmod 6$ by adding  $\lambda_i \pmod 6$ along $C_i$ for any $i=1,\ldots,r$. Consider the integer flow $f'$ of $G$ obtained from $f$ by adding  $\lambda_i$ along $C_i$ for each $i=1,\ldots,r$. Observe that $f'(e)\equiv g(e) \pmod 6$ for any edge $e$ of $G$, and recall that by the definition of $g$, $g(e) \not\equiv 0 \pmod 6$ for every edge $e$ of $G$. Each time we add some $\lambda_i$ along some cycle $C_i$, the flow values on the edges of $C$ change by at most $5$. As $r\le \e(G)$, all flow values remain in $[t-5\e(G),k-t+5\e(G)]$ in absolute value during the whole procedure. As $t=5\e(G)+1$, all intermediate flows are nowhere-zero $k$-flows. 

\smallskip

We now consider the $k$-flow $g-f'$ in $G$. By the paragraph above, all flow values of $g-f'$ are divisible by $6$. By considering the $(k/6)$-flow $(g-f')/6$ and applying Lemma \ref{lem:intflowdec}, it follows that there is a sequence of cycles $(D_i)_{1\le i \le s}$  of $G$ and a sequence of values $(\kappa_i)_{1\le i \le s}$ from $[-k/6,k/6]$ such that 
\begin{enumerate}
    \item $g$ can be obtained from $f'$ by adding  $6\kappa_i$ along $D_i$ for each $i=1,\ldots,s$, and
    \item for each edge $e$, all flow values of $e$ remain between $f'(e)$ and $g(e)$, and in particular in $[-k+1,k-1]$, during the procedure.
\end{enumerate} 
As all flow values of $f'$ are distinct from 0 modulo $6$, all flow values remain distinct from 0 modulo $6$, and in particular non-zero, during this procedure. It follows that all intermediate flows are nowhere-zero $k$-flows, as desired.
\end{proof}

\section{Conclusion}\label{sec:ccl}

\subsection*{Flows with boundaries} A natural generalization of $A$-flows is that of \emph{$A$-flows with boundary} (see \cite{BoundaryThesis} for a survey). We start with a \emph{boundary} $\beta$, which is a function $\beta:V(G)\to A$ such that $\sum_{v\in V(G)} \beta(v)=0_A$, and the goal is to find a function $f:E(G)\to A$ such that  $\delta f (v) = \beta(v)$ for any vertex $v$ of $G$ (see Section \ref{sec:def} for the definition of $\delta f$). By taking $\beta=0_A$, we recover the notion of an $A$-flow. We can ask more generally if for all boundaries $\beta$ of a graph $G$, all the nowhere-zero $A$-flows with boundary $\beta$ in $G$ are connected. We have chosen to state our results only in the usual setting of nowhere-zero flows without boundary to avoid unnecessary technicalities, but it turns out that the results in Sections \ref{sec:duality} and \ref{sec:ab} hold in the more general context of flows with boundaries. For Section \ref{sec:duality} we just need a version of Theorem \ref{coloring_degeneracy} which holds for \emph{group coloring} (the dual version of flows with boundaries) instead of classical coloring, but the original proof of Theorem \ref{coloring_degeneracy} readily applies to this setting as well. 

\subsection*{Final remarks} The main questions left open by our work are Problem \ref{conj:iflow2}, and whether the cardinality of the group $\mathbb{Z}_2^8$ in our positive answer to Problem \ref{conj:gflow2} can be improved.

\subsection*{Acknowledgements} We thank Zolt\'an Szigeti and Clément Legrand-Duchesne for helpful discussions on frozen configurations in nowhere-zero $\mathbb{Z}_4$-flows and $\mathbb{Z}_5$-flows, and Dan Cranston for pointing out errors in the original proofs of Lemma~\ref{lem:cyclicgroups1} and  Theorem \ref{thm:largegroup}. We would also like to thank Patryk Morawski and Yuval Wigderson for their kind help with finding and verifying the example from Figure~\ref{fig:Z5frozen}. The results from Sections \ref{sec:ab} and \ref{sec:intflows} were obtained during the 2026 Barbados Graph Theory Workshop held at the Bellairs Research Institute in January 2026. Thanks to the organizers and the other workshop participants for creating a productive working atmosphere.

The last author gratefully acknowledges funding from the SNSF Ambizione Grant No. 216071.

\bibliographystyle{plain}
\bibliography{references.bib}

\needspace{8\baselineskip} % for affiliations

\end{document}